\documentclass[11pt, reqno]{amsart} \textwidth=14.9cm \oddsidemargin=1cm \evensidemargin=1cm
\usepackage{amsmath,amstext,amsbsy,amssymb,amscd}
\usepackage{amsmath}
\usepackage{amsxtra}
\usepackage{amscd}
\usepackage{amsthm}
\usepackage{amsfonts}
\usepackage{amssymb}
\usepackage{eucal}
\usepackage{color}
\usepackage{graphicx}%
\setcounter{MaxMatrixCols}{30}

\newtheorem{thm}{Theorem}[section]
\theoremstyle{plain}

\newtheorem{lem}[thm]{Lemma}
\newtheorem{prop}[thm]{Proposition}
\newtheorem{cor}[thm]{Corollary}

\theoremstyle{definition}
\newtheorem{defn}[thm]{Definition}

\theoremstyle{remark}
\newtheorem{rem}[thm]{Remark}

\definecolor{A}{rgb}{.75,1,.75}

\numberwithin{equation}{section}

\newcommand{\C}{\mathbb C}
\newcommand{\Z}{\mathbb Z}


\newcommand{\F}{\mathbb F}

\newcommand{\al}{\alpha}
\newcommand{\Cnr}{\mathcal{C}_n^r}

\newcommand{\n}{\widehat{n}}

\newcommand{\Hn}{\mathcal H_n}
\newcommand{\Hno}{{}^0{\mathcal H}_n}
\newcommand{\Hng}{\mathcal H_n (G)}
\newcommand{\HngModI}{\mathcal{H}_n(G)\text{-\bf mod}{}_{\mathbb{I}}}
\newcommand{\HngMods}{\mathcal{H}_n(G)\text{-\bf mod}^s}
\newcommand{\HnghatMods}{\mathcal{H}_{\n}(G)\text{-\bf mod}^s}

{\vskip-\lastskip\medskip
  \noindent
  {\em #1.}\enspace
  }%
{\qed\par\medskip
  }

\begin{document}

\title[Wreath Hecke algebras]{Modular representations and branching rules for wreath Hecke algebras}
\author{Jinkui Wan and Weiqiang Wang}\thanks{Partially supported by NSF grant DMS--0800280.}
\address{Department of Mathematics, University of Virginia,
Charlottesville,VA 22904, USA.}\email{jw5ar@virginia.edu (Wan),
ww9c@virginia.edu (Wang)}

\begin{abstract}
We introduce a generalization of degenerate affine Hecke algebra,
called wreath Hecke algebra, associated to an arbitrary finite
group $G$. The simple modules of the wreath Hecke algebra and of
its associated cyclotomic algebras are classified over an
algebraically closed field of any characteristic $p\geq 0$. The
modular branching rules for these algebras are obtained, and when
$p$ does not divide the order of $G$, they are further identified
with crystal graphs of integrable modules for quantum affine
algebras. The key is to establish an equivalence between a module
category of the (cyclotomic) wreath Hecke algebra and its suitable
counterpart for the degenerate affine Hecke algebra.
\end{abstract}

\maketitle

 \setcounter{tocdepth}{1}
\tableofcontents

\section{Introduction}
\subsection{}

The modular branching rules for the symmetric groups $S_n$ over an
algebraically closed field $\mathbb F$ of characteristic $p$ were
obtained by Kleshchev \cite{K1}. Subsequently, the branching graph
of Kleshchev was interpreted by Lascoux, Leclerc, and Thibon as
the crystal graph of the basic representation of the quantum
affine algebra $U_q(\widehat{\mathfrak{sl}}_p)$. Further
connections \cite{LLT} between (affine or cyclotomic) Hecke
algebras of type $A$ at the $\ell$th roots of unity and
Kashiwara-Lusztig crystal basis for integrable
$U_q(\widehat{\mathfrak{sl}}_\ell)$-modules have been
systematically developed by Ariki and Grojnowski from viewpoints
complementary (if perhaps not complimentary) to each other
\cite{Ar1, Gro} (also cf. \cite{Br1, GV, OV}). A parallel version
for degenerate affine Hecke algebra $\mathcal{H}_n$ (introduced by
Drinfeld \cite{Dr} and Lusztig \cite{Lus}) is formulated in
Kleshchev's book \cite{K2}, where the symmetric group algebra
appears as a minimal cyclotomic quotient algebra of
$\mathcal{H}_n$. We refer to \cite{K2} for more references and
historical remarks.

\subsection{}

The goal of this paper is to introduce the wreath Hecke algebra
$\mathcal{H}_n(G)$ associated to an arbitrary finite group $G$,
and to develop its representation theory over the field $\mathbb
F$ of characteristic $p\ge 0$. When $G$ is trivial,
$\mathcal{H}_n(G)$ specializes to the degenerate affine Hecke
algebra $\mathcal{H}_n$. The main results of this paper include
the classification of the simple $\mathcal{H}_n(G)$-modules, the
modular branching rule for $\mathcal{H}_n(G)$, and its
interpretation via crystal graphs of quantum affine algebras.

The modular representations of the spin analogue of the wreath Hecke
algebra in the sense of spin symmetric group (cf. \cite[Part
II]{K2}) will be treated in a separate paper.

\subsection{}

The representation theory of wreath products $G_n=G^n \rtimes S_n$
over $\C$ is known to be largely controlled by infinite-dimensional
Lie algebras \cite{W1, W2, Ze}. Our introduction of the wreath Hecke
algebra $\mathcal{H}_n(G)$ in Section~\ref{wreath Hecke algebra} was
motivated by the desire to study the modular representations of
wreath products over $\mathbb F$. In the wreath Hecke algebra, the
role of $S_n$ is played by the wreath product $G_n$. Moreover, there
exists a canonical surjective algebra homomorphism from
$\mathcal{H}_n(G)$ to the group algebra $\mathbb F G_n$, where the
polynomial generators are mapped to the generalized Jucys-Murphy
elements in $\mathbb F G_n$ (introduced independently in \cite{Pu}
and \cite{W2} with different applications).

The wreath Hecke algebra also arises naturally (cf. \cite{Wan}) in
the centralizer construction of wreath products in the sense of
Molev-Olshanski. For a cyclic group $G=C_r$, the algebra
$\mathcal{H}_n(C_r)$ appeared in Ram and Shepler \cite{RS} in their
search of degenerate (=graded) Hecke algebras associated to complex
reflection groups. Our results on representation theory are new in
this case even when the characteristic of $\mathbb F$ is zero.

We establish the PBW basis of $\mathcal{H}_n(G)$ and identify the
center for $\mathcal{H}_n(G)$ in Section ~\ref{wreath Hecke
algebra}.

\subsection{}

Our study of representation theory of $\mathcal{H}_n(G)$ is built
on an equivalence between the $\Hng$-module category (or rather a
certain full subcategory if $p$ divides the order $|G|$ of the
group $G$) and the module category of an algebra which is a direct
sum of certain products of degenerate affine Hecke algebras of
smaller ranks. This is achieved in Section~\ref{morita equivalence
and simples}.

As a first application of the above category equivalence, the
classification of finite dimensional simple
$\mathcal{H}_n(G)$-modules is obtained in Section~\ref{branching
rule} by a reduction to the known classification of simple modules
for $\mathcal{H}_n$ (cf. \cite{K2}). As a second application, we
establish the modular branching rule for $\mathcal{H}_n(G)$ \`a la
Kleshchev. That is, we describe explicitly the socle of the
restriction of a simple $\mathcal{H}_n(G)$-module to a subalgebra
$\mathcal{H}_{n-1,1}(G)$, and hence to the subalgebra
$\mathcal{H}_{n-1}(G)$ (see Section~\ref{branching rule} for
precise statement and notation).

In Section \ref{crystal operators}, we formulate the cyclotomic
wreath Hecke algebras $\mathcal{H}^{\lambda}_n(G)$ as a family of
finite dimensional quotient algebras of $\mathcal{H}_n(G)$. In
particular, the wreath product group algebra $\mathbb{F}G_n$ appears
as the minimal cyclotomic wreath Hecke algebra. Just as in the
degenerate affine Hecke algebra case, much of the representation
theory of $\mathcal{H}_n(G)$ (e.g. the classification of simple
modules and branching rules) is reduced to that for the cyclotomic
wreath Hecke algebras. We establish an equivalence between (a
distinguished full subcategory of) the module category of a
cyclotomic wreath Hecke algebra and the module category of a certain
variant of the degenerate cyclotomic Hecke algebras.

Now let us assume that $p$ is prime to $|G|$. The ratio of $|G|$
by the degree of a simple $G$-module (which is known to be an
integer) modulo $p$ has come to play a significant role. The
classification of blocks for a cyclotomic wreath Hecke algebra is
reduced to its degenerate cyclotomic counterpart in Brundan
\cite{Br} (a $q$-analogue was due to Lyle and Mathas). For $p>0$,
we define an action of an affine Lie algebra $\mathfrak{g}$ (which
is a direct sum of copies of $\widehat{\mathfrak{sl}}_p$) on the
direct sum of the Grothendieck groups of
$\mathcal{H}_n^{\lambda}(G)$-modules for all $n$ and further show
that the resulting representation is irreducible and integrable.
The modular branching rules for $\mathcal{H}_n^{\lambda}(G)$ are
now controlled by the crystal graph of the integrable
representation of the corresponding quantum affine algebra
$U_q(\mathfrak{g})$. A similar but somewhat more cumbersome
description is available also for $p=0$.

\subsection{Acknowledgments}

This work will form a part of the dissertation of the first author
at University of Virginia. The second author thanks Zongzhu Lin for
a stimulating discussion in 2003 on modular representations of
wreath products.

The second author communicated to S.~Ariki during his visit at
Virginia in 2004 that the modular representation theory of the
degenerate affine Hecke algebra could be generalized to the wreath
product setup. We thank Aaron Phillips (who briefly participated
in the project at an early stage) and Ariki for their interests.
When keeping Ariki updated in March 2008 on the completion of our
project, we learned that in the meantime his student Tsuchioka has
independently worked out the modular branching rules for wreath
products in some recent paper (see our Proposition~\ref{rule:Gn}
and Remark~\ref{rem:wreathcrystal}), and Tsuchioka has also been
obtaining results on a variant of wreath Hecke algebras which
overlap significantly with our paper. Unaware of the reference
\cite{Pu} which Tsuchioka pointed out to us, the first author has
worked out a wreath product generalization of \cite{OV} using
\cite{W2}.

\section{Definition and properties of the wreath Hecke algebra}\label{wreath Hecke algebra}

\subsection{The $p$-regular conjugacy classes of wreath products.}

Let $G$ be a finite group, and let $G_*$ denote the set of all
conjugacy classes of $G$. The symmetric group $S_n$ acts on the
product group $G^n=G\times \cdots\times G$ by permutations:
${}^{w}g:=w (g_1,\ldots, g_n)=(g_{w^{-1}(1)},\ldots,
g_{w^{-1}(n)})$ for any $g=(g_1,\ldots,g_n)\in G^n$ and $w\in
S_n$. The wreath product of $G$ with $S_n$ is defined to be the
semidirect product
$$
G_n=G^n\rtimes S_n=\{(g,w)|g=(g_1,\ldots, g_n)\in G^n, w\in S_n\}
$$
with the multiplication $(g,w)(h,\tau)=(g \cdot {}^{w}h, w\tau)$.

Let $\lambda=(\lambda_1,\ldots, \lambda_l)$ be a partition of
integer $|\lambda|=\lambda_1+\cdots+\lambda_l$, where $\lambda_1\geq
\dots \geq \lambda_l \geq 1$.
We will also write a partition as $
\lambda=(1^{m_1}2^{m_2}\cdots), $ where $m_i$ is the number of parts
in $\lambda$ equal to $i$.

We will use partitions indexed by $G_*$. For a finite set $X$ and
$\rho=(\rho(x))_{x\in X}$ a family of partitions indexed by $X$, we
write
$$\|\rho\|=\sum_{x\in X}|\rho(x)|.$$
Sometimes it is convenient to regard $\rho=(\rho(x))_{x\in X}$ as a
partition-valued function on $X$. We denote by $\mathcal P(X)$ the
set of all partitions indexed by $X$ and by $\mathcal P_n(X)$ the
set of all partitions in $\mathcal P(X)$ such that $\|\rho\|=n$.

The conjugacy classes of $G_n$ can be described as follows. Let
$x=(g, \sigma )\in G_n$, where $g=(g_1, \ldots, g_n) \in G^n,$ $
\sigma \in S_n$. The permutation $\sigma $ is written as a product
of disjoint cycles. For each such cycle $y=(i_1 i_2 \cdots i_k)$
the element $g_{i_k} g_{i_{k -1}} \cdots g_{i_1} \in G$ is
determined up to conjugacy in $G$ by $g$ and $y$, and will be
called the {\em cycle-product} of $g$ corresponding to the cycle
$y$. For any conjugacy class $C$ and each integer $i\geq 1$, the
number of $i$-cycles in $\sigma$ such that the corresponding
cycle-product of $g$ lies in $C$ will be denoted by $m_i(C)$.
Denote by $\rho (C)$ the partition $(1^{m_1 (C)} 2^{m_2 (C)}
\ldots )$, $C \in G_*$. Then each element $x=(g, \sigma)\in G_n$
gives rise to a partition-valued function $( \rho (C))_{C \in G_*}
\in {\mathcal P} ( G_*)$ such that $\sum_{i, C} i m_i(C) =n$. The
partition-valued function $\rho =( \rho(C))_{ C \in G_*} $ is
called the {\em type} of $x$. It is known (cf. \cite{Mac1}) that
any two elements of $G_n$ are conjugate in $G_n$ if and only if
they have the same type.

Denote by $G_{p*}$ the set of conjugacy classes of $G$ whose
elements have order prime to $p$.

\begin{prop}
There is a natural bijection between the set $(G_n)_{p*}$ and the
set
$$
\{ \rho =(\rho(C))_{C \in G_{p*}} \mid
\|\rho\|=n, \rho(C) \text{ has no part divisible by } p \}.$$
\end{prop}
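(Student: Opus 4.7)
The plan is to reduce the characterization of $p$-regular conjugacy classes to a computation of the order of an element of $G_n$ in terms of its type. Since the preceding discussion already gives a bijection between all conjugacy classes of $G_n$ and partition-valued functions $\rho = (\rho(C))_{C \in G_*}$ with $\|\rho\| = n$, the assertion is equivalent to identifying which types $\rho$ correspond to elements of order prime to $p$.

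The core computation is the order of an element supported on a single cycle. For $\tau = (1\ 2\ \cdots\ k) \in S_k$ and $g = (g_1,\ldots,g_k) \in G^k$, iterating the multiplication rule $(g,w)(h,\sigma) = (g\cdot {}^w h, w\sigma)$ gives
$$
(g,\tau)^k = \bigl(g \cdot {}^{\tau}g \cdot {}^{\tau^2}g \cdots {}^{\tau^{k-1}}g,\ 1\bigr).
$$
A direct inspection of each coordinate shows that every entry of the $G^k$-part is a $G$-conjugate of the cycle-product $c = g_k g_{k-1} \cdots g_1$. Consequently the order of $(g,\tau)$ in $G_k$ equals $k \cdot \mathrm{ord}(c)$, and this depends only on $k$ and on the conjugacy class $C \ni c$.

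For a general $x = (g,\sigma) \in G_n$, the disjoint cycle decomposition of $\sigma$ decomposes $x$ into a product of commuting elements, each of the form just analyzed. Hence
$$
\mathrm{ord}(x) = \mathrm{lcm}\bigl\{\,i \cdot \mathrm{ord}(C)\ \bigm|\ C \in G_*,\ i \geq 1,\ m_i(C) > 0\,\bigr\}.
$$
Therefore $\mathrm{ord}(x)$ is coprime to $p$ if and only if for every pair $(C,i)$ with $m_i(C) > 0$ both $\mathrm{ord}(C)$ and $i$ are coprime to $p$. The first condition says $\rho(C)$ is empty whenever $C \notin G_{p*}$, and the second says every part of $\rho(C)$ is coprime to $p$. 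This is precisely the indexing set on the right-hand side, and the bijection follows by restricting the known bijection for all conjugacy classes to its $p$-regular part.

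The only substantive step is the calculation of $(g,\tau)^k$ and the verification that its $G^k$-component is the tuple of $G$-conjugates of the cycle-product $c$; once this is established, the remainder is elementary lcm bookkeeping on partitions. I do not expect a genuine obstacle, since the cycle-product formalism is set up precisely to make this calculation transparent.
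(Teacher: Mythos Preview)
Your proof is correct and follows essentially the same idea as the paper's: both arguments reduce the question to computing a power of $(g,\sigma)$ and identifying the cycle-products in the resulting $G^n$-component. The paper raises $a=(g,\sigma)$ directly to the power $d=o(\sigma)$ and reads off that the $i_k$-th coordinate of $a^d$ is the $(d/k)$-th power of the cycle-product, whereas you first decompose $a$ into commuting single-cycle pieces and compute the exact order $k\cdot\mathrm{ord}(c)$ of each piece before taking an lcm; this yields slightly more (the precise order of $a$, not just its $p$-divisibility), but the underlying mechanism is the same.
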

\begin{proof}
For a given $a =(g,\sigma) \in G_n$ with $g=(g_1, \ldots, g_n)\in
G^n$ and $\sigma \in S_n$, clearly the order $o(\sigma)$ of
$\sigma$ divides the order $ o(a)$ of $a$. Set $d =o(\sigma)$. Let
us assume $p \nmid d$. Then
\begin{eqnarray} \label{eq:power}
a^d =(g \cdot {}^{\sigma}g  \cdots  {}^{\sigma^{d-1}}g, 1).
\end{eqnarray}

Fix an index $i$ with $1 \le i \le n$. Let us take a cycle of
$\sigma$, say $y=({i_1} i_2 \ldots {i_k})$, where $i_k=i$. One has
$k \mid d$. Then $y (g_{i_j}) =g_{i_{j-1}}$ for $ 1<j \le k$. So
the $i_k$-th factor of the $n$-tuple $(g \cdot {}^{\sigma}g \cdots
{}^{\sigma^{d-1}}g)$ equals $(g_{i_k}g_{i_{k-1}} \cdots
g_{i_1})^{d/k}$. Note $\gcd (d/k,p)=1$. Thus, the order of $a^d$,
which equals the order of $(g \cdot {}^{\sigma}g  \cdots
{}^{\sigma^{d-1}}g)$ by (\ref{eq:power}), is prime to $p$ if and
only if the cycle product $g_{i_k}g_{i_{k-1}} \cdots g_{i_1}$ lies
in $G_{p*}$ for every cycle $y$.
\end{proof}

\begin{cor}\label{conjugate class equation}
Let $q$ be an indeterminate. We have
$$ \sum_{n=0}^\infty | (G_n)_{p*} | q^n
 = \prod^\infty_{\stackrel{m =1}{p \nmid m}} \left(\frac{1}{1-q^m}\right)^{| G_{p*}
 |}.
$$
\end{cor}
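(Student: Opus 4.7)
The plan is to apply the bijection of the preceding proposition and reduce the identity to a standard generating-function computation for partitions.

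First I would use the bijection to rewrite $|(G_n)_{p*}|$ as the number of tuples $(\rho(C))_{C \in G_{p*}}$ with $\sum_C |\rho(C)| = n$ and every part of every $\rho(C)$ coprime to $p$. Substituting this into the generating function and interchanging the sum over $n$ with the sum over tuples gives
\begin{equation*}
\sum_{n=0}^\infty |(G_n)_{p*}|\, q^n
 = \sum_{(\rho(C))_C} q^{\sum_C |\rho(C)|}
 = \prod_{C \in G_{p*}} \left( \sum_{\mu} q^{|\mu|} \right),
\end{equation*}
where the inner sum runs over all partitions $\mu$ with no part divisible by $p$.

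Next I would invoke the classical Euler-type identity: the generating function for partitions having no part divisible by $p$ factors as
\begin{equation*}
\sum_{\mu \colon p \nmid \mu_i} q^{|\mu|}
 = \prod_{\substack{m \geq 1 \\ p \nmid m}} \frac{1}{1-q^m},
\end{equation*}
which follows by writing a partition as a multiplicity sequence $\mu = (1^{m_1} 2^{m_2} \cdots)$ with $m_i = 0$ whenever $p \mid i$, and expanding each geometric factor $1/(1-q^m)$.

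Combining the two displays and noting that the inner product is independent of $C$, the outer product over $C \in G_{p*}$ contributes the exponent $|G_{p*}|$, yielding the desired identity. The only step requiring any care is the bookkeeping in the first display, but this is purely formal once the bijection is in hand; there is no real obstacle here, the corollary being essentially a direct restatement of the proposition in generating-function language.
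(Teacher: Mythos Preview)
Your argument is correct and is exactly the intended derivation: the paper states the corollary with no proof, treating it as an immediate consequence of the bijection in the preceding proposition, and your computation is precisely the standard generating-function unpacking of that bijection.
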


\subsection {Definition of wreath Hecke algebras}

 Let $\mathbb{F}$ be an
algebraically closed field of characteristic $p$ and let
$\mathbb{F}G_n$ be the group algebra of the wreath product $G_n$.
For each $g\in G$ and $1\leq i\leq n$, let $g^{(i)}\in G^n$
correspond to $g$ in the $i$-th factor subgroup of $G^n$. Recall
that the (generalized) {\it Jucys-Murphy elements} $\xi_k\in
\mathbb{F}G_n (1\leq k\leq n)$ are introduced independently in
\cite{Pu} and \cite{W2} as follows:
\begin{eqnarray*}
\xi_k: =\sum_{1\leq i<k}\sum_{g\in G} \left(g^{(i)}(g^{-1})^{(k)},
(i,k) \right).
\end{eqnarray*}
If $G=\{1\}$, then $G_n =S_n$, and the $\xi_k$ become the usual
Jucys-Murphy elements \cite{Ju, Mu}.

Recall that $S_n$ is generated by the simple reflections
$s_1,\ldots,s_{n-1}$. Denote
\begin{eqnarray} \label{t_ij}
t_{ij}=\sum_{h\in G}h^{(i)}(h^{-1})^{(j)}\in\mathbb{F}G^n, \quad
1\leq i<j\leq n.
\end{eqnarray}
The following proposition follows by a direct computation.
\begin{prop}\label{Jucys murphy}
The following identities hold in the group algebra $\mathbb{F}G_n$:
\begin{align}
 \xi_i\xi_j&=\xi_j \xi_i,\quad 1\leq i,j \leq n,\notag \\
 g\xi_i&=\xi_i g,\quad g\in G^n, 1\leq i\leq n,\notag\\
 s_i\xi_i&=\xi_{i+1}s_i-t_{i,i+1}, \quad 1\leq i\leq n-1,\notag\\
 s_i\xi_j&=\xi_js_i,\quad j\neq i,i+1.\notag
\end{align}
\end{prop}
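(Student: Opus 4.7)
The plan is to verify each of the four identities by direct computation in $\mathbb{F}G_n$. Throughout, set $T_{ak}:=\sum_{g\in G}(g^{(a)}(g^{-1})^{(k)},(a,k))$ for $a\neq k$, so that $\xi_k=\sum_{a<k}T_{ak}$, and note the symmetry $T_{ak}=T_{ka}$ obtained from the substitution $g\mapsto g^{-1}$ in the sum.

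Identities 2, 3, and 4 are straightforward. For identity 2, the $a$-th and $i$-th $G^n$-entries of $g\cdot(h^{(a)}(h^{-1})^{(i)},(a,i))$ are $(g_a h,\, g_i h^{-1})$, whereas those of $(h^{(a)}(h^{-1})^{(i)},(a,i))\cdot g$ are $(h g_i,\, h^{-1} g_a)$ after ${}^{(a,i)}g$ interchanges components of $g$; the two sums over $h\in G$ are matched by the bijection $h\mapsto g_a^{-1} h g_i$ of $G$. Identity 4 ($j\notin\{i,i+1\}$) follows by case analysis on $a$: when $a\notin\{i,i+1\}$ the summand commutes with $s_i$, and when $a\in\{i,i+1\}$ (which requires $j>i+1$) left multiplication by $s_i$ interchanges the $a=i$ and $a=i+1$ summands, so the total over $a<j$ is preserved. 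For identity 3, the symmetric-group identity $s_i(a,i)=(a,i+1)s_i$ (valid for $a<i$) combined with ${}^{s_i}(g^{(a)}(g^{-1})^{(i)})=g^{(a)}(g^{-1})^{(i+1)}$ gives $s_i\xi_i=\sum_{a<i,\,g}(g^{(a)}(g^{-1})^{(i+1)},(a,i+1))\,s_i$; this equals $\xi_{i+1}s_i$ minus the boundary $b=i$ term, which collapses via $(i,i+1)s_i=1$ to $\sum_g g^{(i)}(g^{-1})^{(i+1)}=t_{i,i+1}$.

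The main obstacle is identity 1. Assume $i<j$, and expand $[\xi_i,\xi_j]=\sum_{a<i,\,b<j}[T_{ai},T_{bj}]$. When $\{a,i\}\cap\{b,j\}=\emptyset$ the commutator vanishes, since the transpositions and the $G^n$-factors then occupy disjoint supports. The only remaining configurations compatible with $a<i<j$ and $b<j$ are $a=b$ and $b=i$, whence
$$[\xi_i,\xi_j]=\sum_{a<i}\bigl[T_{ai},\,T_{aj}+T_{ij}\bigr].$$
The decisive observation is that, for each triple $\{a,i,j\}$, the element $T_{ai}+T_{aj}+T_{ij}$ is precisely the $G_3$-conjugacy class sum of $(1,(a,i))$ inside the sub-wreath-algebra $\mathbb{F}(G^{\{a,i,j\}}\rtimes S_{\{a,i,j\}})\subseteq\mathbb{F}G_n$, and is therefore central in that subalgebra. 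In particular it commutes with $T_{ai}$; using $[T_{ai},T_{ai}]=0$ we conclude $[T_{ai},T_{aj}+T_{ij}]=0$, and summing over $a<i$ yields $[\xi_i,\xi_j]=0$.
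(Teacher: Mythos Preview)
Your proof is correct. The paper offers no details beyond ``follows by a direct computation,'' and your explicit verification---particularly the clean class-sum argument for the commutation of the $\xi_i$'s, recognizing $T_{ai}+T_{aj}+T_{ij}$ as a central element of the sub-wreath algebra on $\{a,i,j\}$---carries this out completely and matches the paper's intended approach.
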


Let $P_n=\mathbb{F}[x_1,\ldots,x_n]$ be the algebra of polynomials
in $x_1,\ldots,x_n$. For each
$\alpha=(\alpha_1,\ldots,\alpha_n)\,\in\mathbb{Z}_+^n$, set
$x^{\alpha}=x_1^{\alpha_1}\cdots x_n^{\alpha_n}.$ The symmetric
group $S_n$ acts as automorphisms on $P_n$ by permutation. Let us
denote this action by $f\mapsto {}^wf$ for $w\in S_n$ and $f\in
P_n$. Then we have ${}^w(x^{\alpha})=x^{w\alpha}$, where
$w\alpha=(\alpha_{w^{-1}1},\ldots,\alpha_{w^{-1}n})$ for
$\alpha=(\alpha_1,\ldots,\alpha_n)\in\mathbb{Z}_+^n$ and $w\in
S_n$.

\begin{defn}
The {\em wreath Hecke algebra }$\mathcal{H}_n(G)$ is an associative
algebra over $\mathbb{F}$ generated by $G^n, s_1,\ldots,s_{n-1}$ and
$x_1,\ldots,x_n$ subject to the following relations:
\begin{align}
 x_ix_j&=x_jx_i,\quad 1\leq i,j \leq n,\notag\\
x_ig&=gx_i, \quad g\in G^n, 1\leq i\leq n, \label{comm:xg} \\
s_i^2=1,\quad s_is_j &=s_js_i, \quad
s_is_{i+1}s_i=s_{i+1}s_is_{i+1}, \quad|i-j|>1,\label{braid}\\
s_ix_i&=x_{i+1}s_i-t_{i,i+1},\label{symmetric and polnomial 2}\\
s_ix_j&=x_js_i, \quad j\neq i, i+1, \label{Sn x}\\
s_ig&={}^{s_i}g\, s_i, \quad g\in G^n, 1\leq i\leq
n-1.\label{SnGn}
\end{align}
\end{defn}

\begin{rem}\label{surj. hom. to Gn}
By Proposition \ref{Jucys murphy}, we have a surjective algebra
homomorphism from $\mathcal{H}_n(G)$ to $\mathbb{F}G_n$, which is an
extension of the identity map of $\mathbb{F}G_n$ and sends each
$x_k$ to $\xi_k$ for $1\leq k\leq n$. This was our original
motivation for the definition of $\mathcal{H}_n(G).$ For $n=2$, the
algebra $\mathcal{H}_2(G)$ can also be found in \cite[Section
3]{Pu}.

For a cyclic group $G=C_r$, the algebra $\mathcal{H}_n(C_r)$ also
appeared in \cite{RS}. Moreover, it is observed \cite{De} that
$\mathcal{H}_n(C_r)$ appears naturally as a subalgebra of the
symplectic reflection algebra of Etingof-Ginzburg associated to
the complex reflection group $G(r,1,n)$. It will be interesting to
see if $\mathcal{H}_n(G)$ associated to a general finite subgroup
$G$ of $SL_2(\mathbb{C})$ is related to symplectic reflection
algebras.
\end{rem}

If $G=\{1\}$ is the trivial group, then
$\mathcal{H}_n(G)=\mathcal{H}_n$, the degenerate affine Hecke
algebra for $S_n$ \cite[Chapter 3]{K1}, where \eqref{symmetric and
polnomial 2} is replaced by the relation
\begin{equation}
s_ix_i =x_{i+1}s_i-u
\end{equation}
with $u=1$. We shall denote by $\Hno$ the algebra $P_n \rtimes \F
S_n$, i.e., the degenerate affine Hecke algebra with $u=0$.

\begin{lem}
For $f \in P_n$, $g \in G^n$, and $1\leq i\leq n-1$, the following
identities hold in $\mathcal{H}_n(G)$:
 \begin{align}
s_it_{i,i+1}&=t_{i,i+1}s_i \label{s and t},\\
s_ix_{i+1}&=x_is_i+t_{i,i+1}\label{symmetric and polynomial 3},\\
s_igf&= {}^{s_i}g
\left({}^{s_i}fs_i+t_{i,i+1}\frac{f-{}^{s_i}f}{x_{i+1}-x_i}
   \right) \label{symmetric group and polynomial},\\
t_{i,i+1}g&={}^{s_i}g\, t_{i,i+1}. \label{t and g}
\end{align}
\end{lem}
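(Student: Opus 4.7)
The plan is to establish the four identities in the order \eqref{s and t}, \eqref{symmetric and polynomial 3}, \eqref{t and g}, \eqref{symmetric group and polynomial}, each by reduction to the defining relations \eqref{comm:xg}--\eqref{SnGn}. The crucial structural observation underlying everything is that $t_{i,i+1}\in \mathbb{F}G^n$, so by \eqref{comm:xg} it commutes with $x_1,\dots,x_n$; this will be used repeatedly.

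For \eqref{s and t}, I would first check that ${}^{s_i}t_{i,i+1}=t_{i,i+1}$: swapping the factor indices $i$ and $i{+}1$ in the summand $h^{(i)}(h^{-1})^{(i+1)}$ and then reindexing via $h\mapsto h^{-1}$ returns the sum to itself. Combined with \eqref{SnGn} applied to $t_{i,i+1}\in \mathbb{F}G^n$, this yields $s_it_{i,i+1}={}^{s_i}t_{i,i+1}\,s_i=t_{i,i+1}s_i$. For \eqref{symmetric and polynomial 3}, I would rewrite \eqref{symmetric and polnomial 2} as $x_{i+1}s_i=s_ix_i+t_{i,i+1}$, left-multiply by $s_i$ to get $s_ix_{i+1}s_i=x_i+s_it_{i,i+1}$, invoke \eqref{s and t}, and then right-multiply by $s_i$ (using $s_i^2=1$) to conclude.

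For \eqref{t and g}, the identity is in $\mathbb{F}G^n\subset \mathcal{H}_n(G)$, so I would verify it by a direct coordinatewise computation: for $g=(g_1,\dots,g_n)$, the $i$-th and $(i{+}1)$-th entries of $h^{(i)}(h^{-1})^{(i+1)}g$ are $hg_i$ and $h^{-1}g_{i+1}$, while for ${}^{s_i}g\cdot h^{(i)}(h^{-1})^{(i+1)}$ they are $g_{i+1}h$ and $g_ih^{-1}$; the substitution $h\mapsto g_{i+1}^{-1}hg_i$ is a bijection of $G$ intertwining the two sums.

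Finally, for \eqref{symmetric group and polynomial}, I would first apply \eqref{SnGn} to write $s_igf={}^{s_i}g\,(s_if)$, so the problem reduces to proving for every $f\in P_n$
\[
s_if={}^{s_i}f\,s_i+t_{i,i+1}\,\partial_if,\qquad \partial_if:=\frac{f-{}^{s_i}f}{x_{i+1}-x_i}\in P_n,
\]
where the quotient is a genuine polynomial because $f-{}^{s_i}f$ is antisymmetric in $x_i,x_{i+1}$. I would prove this by induction on the total degree of $f$: the base cases $f=1$ and $f=x_j$ follow from \eqref{Sn x}, \eqref{symmetric and polnomial 2}, and \eqref{symmetric and polynomial 3}, according as $j\neq i,i{+}1$, $j=i$, or $j=i{+}1$. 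The inductive step is multiplicative: assuming the identity holds for $f'$ and $f''$, one computes $s_i(f'f'')$ by applying the identity twice, commuting $t_{i,i+1}$ past polynomials freely (by \eqref{comm:xg}), and verifying the divided-difference Leibniz rule
\[
{}^{s_i}f'\cdot\partial_if''+(\partial_if')\cdot f''=\partial_i(f'f'').
\]
The bookkeeping in this last step is the only nontrivial part of the proof; none of it requires any new input beyond the defining relations and the commutation of $t_{i,i+1}$ with $P_n$.
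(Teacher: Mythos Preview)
Your proposal is correct and follows essentially the same route as the paper: \eqref{s and t} via ${}^{s_i}t_{i,i+1}=t_{i,i+1}$ together with \eqref{SnGn}; \eqref{symmetric and polynomial 3} from \eqref{symmetric and polnomial 2}, $s_i^2=1$, and \eqref{s and t}; \eqref{t and g} by the bijective substitution on the summation index in $\mathbb{F}G^n$; and \eqref{symmetric group and polynomial} by first pulling out ${}^{s_i}g$ via \eqref{SnGn} and then inducting on the degree of $f$ using the divided-difference Leibniz rule and the commutation of $t_{i,i+1}\in\mathbb{F}G^n$ with $P_n$. The paper's proof is considerably terser but identical in strategy; your write-up simply spells out the induction that the paper leaves to the reader.
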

\begin{proof}
The equation (\ref{s and t}) follows from the identity
$s_ig^{(i)}(g^{-1})^{(i+1)}=(g^{-1})^{(i)}g^{(i+1)}s_i$, while
(\ref{symmetric and polynomial 3}) follows from (\ref{braid}),
(\ref{symmetric and polnomial 2}) and (\ref{s and t}). The equation
(\ref{symmetric group and polynomial}) is deduced by induction on
the degree of the polynomial $f$.

The equation (\ref{t and g}) is reduced to the case $n=2$ and $i=1$,
and hence a computation in $\mathbb{F}G^2$. Indeed, $
t_{12}(h_1,h_2)=\sum_{h\in G}(hh_1,h^{-1}h_2)=\sum_{g\in
G}(h_2g,h_1g^{-1}) =(h_2,h_1)t_{12}$ for any $h_1, h_2\in G$, where
we have used a substitution $g =h_2^{-1}hh_1$.
\end{proof}

\subsection{The PBW basis for $\mathcal{H}_n(G)$}

The following lemma follows from (\ref{symmetric group and
polynomial}).

\begin{lem}\label{spanned set}
Let $x^{\alpha}\in P_n, w\in S_n, g=(g_1,\ldots,g_n)\in G^n$,
where $\alpha=(\alpha_1,\ldots,\alpha_n)\in\mathbb{Z}^n_+$, and
denote the Bruhat ordering on $S_n$ by $\leq$. Then in
$\mathcal{H}_n(G)$ we have
$$wgx^{\alpha} =({}^wg)x^{w\alpha}w+\sum_{u<w}g_uf_uu,
\qquad
 gx^{\alpha}w =w({}^{w^{-1}}g)x^{w^{-1}\alpha}
+\sum_{u<w}ug_u^{\prime}f_u^{\prime}$$ for some $f_u,
f_u^{\prime}\in P_n$ of degrees less than the degree of $x^{\alpha}$
and $g_u, g_u^{\prime}\in \mathbb{F}G^n$.
\end{lem}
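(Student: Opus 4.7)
The plan is to induct on the length $\ell(w)$ of $w \in S_n$. The base case $\ell(w)=0$ (so $w=e$) is immediate: the identity reads $gx^\alpha = ({}^e g)x^{e\alpha} e$, with an empty correction sum.

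For the inductive step on the first identity, I write $w = s_i w'$ with $\ell(w') = \ell(w)-1$, apply the inductive hypothesis to $w'gx^\alpha$, and then multiply on the left by $s_i$. The single essential tool is \eqref{symmetric group and polynomial}, conveniently applied in the form
$$
s_i h \phi = ({}^{s_i}h)({}^{s_i}\phi)\, s_i + ({}^{s_i}h)\, t_{i,i+1}\cdot \partial_i \phi, \qquad \partial_i \phi := \frac{\phi - {}^{s_i}\phi}{x_{i+1}-x_i},
$$
for $h \in G^n$ and $\phi \in P_n$. Applied to the leading term $({}^{w'}g)x^{w'\alpha}w'$ of the inductive hypothesis, it yields $({}^w g)x^{w\alpha} w$ plus a correction attached to $w'$ whose polynomial has degree strictly less than $|\alpha|$. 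Applied to each summand $g_{u'}f_{u'}u'$, it produces two contributions, attached to $s_i u'$ and $u'$, with polynomials of degree $\deg f_{u'}$ and $< \deg f_{u'}$ respectively. In every case the resulting coefficient lies in $\mathbb{F}G^n$ (it involves only ${}^{s_i}(\cdot)$ and possibly the factor $t_{i,i+1}\in\mathbb{F}G^n$), so the required form (element of $\mathbb{F}G^n$)(polynomial)(Weyl group element) is preserved.

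The step I expect to need the most care is the Bruhat-order bookkeeping: I must verify that the only term whose Weyl part equals $w$ is the leading term, and all others have strictly smaller Weyl part. The inequalities $w' < w$ and $u' < w' < w$ are immediate. For $s_i u'$ with $u' < w'$, the subword property of Bruhat order gives $s_i u' \leq w$: fixing a reduced expression for $w'$, the element $u'$ has a reduced expression as a subword, and prepending $s_i$ realizes $s_i u'$ as a subword of the reduced expression $s_i\cdot(\text{reduced word for }w')$ for $w$. Strictness $s_i u' < w$ follows since $s_i u' = w = s_i w'$ would force $u' = w'$. Together with the degree-drop observation above, this confirms that every correction term has the desired form, completing the induction for the first identity.

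The second identity follows by a completely symmetric induction: writing $w = w's_i$, I multiply on the right by $s_i$, using the right-handed analogue of \eqref{symmetric group and polynomial}, obtained from $s_i^2=1$, \eqref{SnGn}, and \eqref{comm:xg}, namely $\phi h \cdot s_i = s_i\cdot({}^{s_i}\phi)({}^{s_i}h) + t_{i,i+1}\,({}^{s_i}h)\,\partial_i \phi$. The Bruhat-order and polynomial-degree bookkeeping go through verbatim, with $w^{-1}\alpha$ appearing in the leading term because $s_i$ is now absorbed on the right.
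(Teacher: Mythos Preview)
Your proof is correct and is precisely the natural induction on $\ell(w)$ that the paper has in mind when it says the lemma ``follows from \eqref{symmetric group and polynomial}''; you have simply supplied the details (the Bruhat-order bookkeeping via the subword property, and the right-handed version of \eqref{symmetric group and polynomial}) that the paper omits. One very minor remark: the sum $\sum_{u<w} g_u f_u u$ should be read as allowing several terms with the same $u$ (since a single product $g_u f_u$ need not absorb a sum of such products), but this is implicit in both your argument and the paper's statement.
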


\begin{thm}\label{PBW basis}
The multiplication of algebras induces an isomorphism of vector
spaces:
$$
P_n \otimes \mathbb{F}G^n \otimes \mathbb{F}S_n \longrightarrow
\mathcal{H}_n(G).
$$
That is, the elements $\{x^{\alpha}gw|~ \alpha\in\mathbb{Z}_+^n,
g\in G^n, w\in S_n\}$ form a linear basis for $\mathcal{H}_n(G)$
(which is called the PBW basis).
\end{thm}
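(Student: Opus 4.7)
The plan has two parts: showing that the proposed PBW monomials $\{x^\alpha g w\}$ both span and are linearly independent. The spanning statement is essentially already contained in Lemma~\ref{spanned set}. Starting from any word in the generators, one uses (\ref{comm:xg}) and (\ref{SnGn}) to move elements of $G^n$ past polynomials and (with $s_i$-twisting) past simple reflections, while (\ref{symmetric group and polynomial}) permits moving a simple reflection past a polynomial at the cost of terms with a strictly shorter permutation factor. A straightforward induction on the length of the permutation part and on the total polynomial degree then shows that $\{x^\alpha g w : \alpha \in \mathbb{Z}_+^n,\, g \in G^n,\, w \in S_n\}$ spans $\Hng$.

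The harder part is linear independence, which I would obtain by constructing a faithful representation of $\Hng$ on the vector space $V := P_n \otimes \F G_n$. Define operators by
\begin{align*}
X_i (f \otimes a) &= x_i f \otimes a, & H(f \otimes a) &= f \otimes Ha \quad \text{for } H\in G^n, \\
\Sigma_i (f \otimes a) &= {}^{s_i}f \otimes s_i a + \frac{f - {}^{s_i}f}{x_{i+1}-x_i} \otimes t_{i,i+1}\, a,
\end{align*}
and check that they satisfy all the defining relations. The commutations among the $X_i$, and of $X_j$ with $\Sigma_i$ for $j \ne i,i+1$, are immediate. The crucial identity $\Sigma_i X_i = X_{i+1}\Sigma_i - T_{i,i+1}$ follows from a one-line calculation using $(x_i - x_{i+1})/(x_{i+1}-x_i) = -1$. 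The relation $\Sigma_i H = {}^{s_i}H\,\Sigma_i$ reduces to (\ref{t and g}). The identity $\Sigma_i^2 = 1$ uses the fact that ${}^{s_i}\bigl(\frac{f-{}^{s_i}f}{x_{i+1}-x_i}\bigr) = \frac{f-{}^{s_i}f}{x_{i+1}-x_i}$ together with (\ref{s and t}).

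Granting the representation, observe that $\Sigma_i(1 \otimes a) = 1 \otimes s_i a$ because $1 - {}^{s_i}(1) = 0$; by induction $\Sigma_w(1\otimes 1) = 1 \otimes w$ for every $w \in S_n$ and any reduced expression. Therefore the map $\Hng \to V$, $a \mapsto a\cdot(1\otimes 1)$, sends each PBW monomial $x^\alpha g w$ to $x^\alpha \otimes gw$, and the latter collection is manifestly a basis of $V = P_n \otimes \F G_n$. So the spanning set is carried to a basis, hence is itself linearly independent, completing the proof. I expect the main obstacle to be the verification of the braid relation $\Sigma_i \Sigma_{i+1}\Sigma_i = \Sigma_{i+1}\Sigma_i \Sigma_{i+1}$ on $V$: this requires carefully tracking the various $t_{i,i+1}$- and $t_{i+1,i+2}$-contributions and relies on the $G^n$-twisting identities from the lemma preceding the theorem; the analogous check in the degenerate affine Hecke algebra case serves as a template, but the presence of the non-central elements $t_{i,i+1}$ makes the bookkeeping noticeably heavier.
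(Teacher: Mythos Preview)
Your proposal is correct. Both you and the paper follow the same high-level strategy: spanning from Lemma~\ref{spanned set}, then linear independence via an explicitly constructed module on which the defining relations are verified by hand (with the braid relation as the heavy step, relying on the identities among $t_{i,i+1},t_{i,i+2},t_{i+1,i+2}$). The difference is in the module and in how linear independence is read off. The paper works on the smaller space $\F G^n\otimes\F[y_1,\dots,y_n]$, with $x_i$ acting as multiplication by $y_i$, $g\in G^n$ by left multiplication, and $s_j$ by the divided-difference-plus-twist formula; it then argues that the PBW monomials act by linearly independent operators on the regular vector $y_1^N y_2^{2N}\cdots y_n^{nN}$ for $N\gg 0$. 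You instead enlarge the second factor to $\F G_n$, which buys you the cyclic vector $1\otimes 1$: evaluation there sends $x^\alpha g w$ directly to the basis element $x^\alpha\otimes gw$, so no asymptotic argument is needed. Your version is essentially the left regular representation in disguise, and the trade-off is a larger module in exchange for a cleaner extraction of linear independence; the technical core (verifying the braid relation) is the same in both.
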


\begin{proof}
It follows easily from Lemma \ref{spanned set} that
$\mathcal{H}_n(G)$ is spanned by the elements $x^{\alpha}gw$ for
$\alpha\in\mathbb{Z}_+^n, g\in G^n, w\in S_n$. Note that
$\{h\otimes y^{\alpha}|~ h\in G^n, \alpha\in\mathbb{Z}_+^n\}$
forms a basis for the vector space
$\mathbb{F}G^n\otimes_{\mathbb{F}}\mathbb{F}[y_1,y_2,\ldots,y_n]$.
We can verify by a direct yet lengthy computation that
$\mathbb{F}G^n\otimes_{\mathbb{F}}\mathbb{F}[y_1,y_2,\ldots,y_n]$
is an $\mathcal{H}_n(G)$-module via
\begin{align}x_i\circ(h\otimes
y^{\alpha})&=h\otimes y_iy^{\alpha}, \quad 1\leq i\leq
n,\notag\\
g\circ (h\otimes y^{\alpha})&=gh\otimes y^{\alpha}, \quad g\in
G^n,\notag\\
s_j\circ (h\otimes y^{\alpha}) &={}^{s_j}h\otimes y^{s_j\alpha}+
({}^{s_j}h)t_{j,j+1} \otimes\frac{y^{\alpha}-
y^{s_j\alpha}}{y_{j+1}-y_j}, \quad 1\le j \le n-1. \notag
\end{align}
In the process of verification, the following identities in
$\mathcal{H}_n(G)$ for $1\leq i\leq n-2$ are used:
\begin{align}
t_{i,i+2}t_{i+1,i+2}&=t_{i,i+1}t_{i,i+2}=t_{i+1,i+2}t_{i,i+1},\notag\\
t_{i,i+1}t_{i+1,i+2}&=t_{i,i+2}t_{i,i+1}=t_{i+1,i+2}t_{i,i+2},\notag\\
t_{i+1,i+2}t_{i,i+2} t_{i,i+1}
&=t_{i,i+1}t_{i,i+2}t_{i+1,i+2}.\notag
\end{align}
To see that the elements $x^{\alpha}gw$ are linearly independent, it
suffices to show that they act by linearly independent linear
operators on
$\mathbb{F}G^n\otimes_{\mathbb{F}}\mathbb{F}[y_1,y_2,\ldots,y_n]$.
This is clear if we consider the action on an element of the form
$y_1^Ny_2^{2N}\cdots y_n^{nN}$ for $N\gg0$.
\end{proof}

By Theorem \ref{PBW basis}, we can from now on identify $P_n$,
$\mathbb{F}G^n, \mathbb{F}S_n$ and $\mathbb{F}G_n$ with the
corresponding subalgebras of $\mathcal{H}_n(G)$. Let $P_n(G)$ be the
subalgebra generated by $G^n$ and $x_1,\ldots,x_n$, then
$$P_n(G)\cong \mathbb{F}G^n\otimes P_n.$$ Also, if $m\leq n$, we
regard $\mathcal{H}_m(G)$ as the subalgebra of $\mathcal{H}_n(G)$
generated by $G^m, x_1,\ldots,x_m$ and $s_1,\ldots,s_{m-1}.$
\subsection{The center of $\mathcal{H}_n(G)$}

We start with a preparatory lemma.
\begin{lem}\label{center 0}
The center of $\mathcal{H}_n(G)$ is contained in the subalgebra
$P_n(G)$.
\end{lem}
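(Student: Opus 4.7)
The plan is to use the PBW decomposition of Theorem \ref{PBW basis} to expand an arbitrary central element $z$ as $z=\sum_{w\in S_n}a_w w$ with $a_w\in P_n(G)$, and then show that $a_w=0$ whenever $w\neq 1$. The idea is the standard one for affine Hecke-type algebras: pick the Bruhat-maximal $w$ for which $a_w\neq 0$ and detect a contradiction by commuting $z$ with a suitable element of $P_n$.

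First I would observe that every $f\in P_n$ commutes with the whole subalgebra $P_n(G)$, since $f$ commutes with $x_1,\ldots,x_n$ (trivially) and with $G^n$ by relation \eqref{comm:xg}. Consequently $fz=\sum_w a_w fw$, while $zf=\sum_w a_w wf$. To move $w$ past $f$, I would invoke (the same type of argument as in) Lemma \ref{spanned set} to write
\[
wf=({}^w f)\,w+\sum_{u<w}c_{u,f}\,u,\qquad c_{u,f}\in P_n(G),
\]
the lower terms being produced by iterated application of \eqref{symmetric and polnomial 2}–\eqref{symmetric group and polynomial}, which generate factors of $t_{i,i+1}\in \mathbb{F}G^n$. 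Now assume for contradiction that some $w\neq 1$ has $a_w\neq 0$, and choose such a $w$ maximal in the Bruhat order. Because $w$ is maximal, no $v>w$ with $a_v\neq 0$ contributes a $w$-component when $a_v vf$ is expanded, and the $fw$-expansions have trivial leading action, so the coefficient of $w$ in the PBW expansion of $zf-fz$ is exactly $a_w({}^w f)-a_w f=a_w({}^w f-f)$. Centrality forces
\[
a_w\bigl({}^w f-f\bigr)=0\quad \text{in }P_n(G).
\]

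To finish, I would choose $f$ smartly. Since $w\neq 1$, there is an index $i$ with $w(i)\neq i$; taking $f=x_i$ gives ${}^w f-f=x_{w(i)}-x_i\neq 0$ inside the polynomial ring $P_n$. The subalgebra $P_n(G)\cong \mathbb{F}G^n\otimes_{\mathbb{F}}P_n$ is a free $P_n$-module with basis $G^n$, hence torsion-free over the integral domain $P_n$. Thus $a_w({}^w f-f)=0$ with ${}^w f-f\neq 0$ forces $a_w=0$, contradicting the choice of $w$. Hence only $w=1$ survives and $z=a_1\in P_n(G)$.

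The only genuinely delicate point is the bookkeeping in the displayed commutation formula for $wf$: one must verify that the "lower" remainder lies in $\sum_{u<w}P_n(G)\,u$ (not, say, $\sum_{u<w}u\,P_n(G)$ with polynomials on the wrong side) so that extracting the coefficient of $w$ in $zf-fz$ really produces the clean expression $a_w({}^w f-f)$. This is essentially Lemma \ref{spanned set} combined with the fact that the $t_{i,i+1}$ appearing via \eqref{symmetric and polnomial 2} belong to $\mathbb{F}G^n\subset P_n(G)$, so nothing but straightforward induction on the length of $w$ is required.
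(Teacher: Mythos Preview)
Your proof is correct and follows essentially the same approach as the paper's: expand $z$ in PBW form, choose a Bruhat-maximal $w$ with nonzero coefficient, and commute with $x_i$ for some $i$ moved by $w$ to force $a_w(x_{w(i)}-x_i)=0$. The paper phrases the final step as an appeal to Theorem~\ref{PBW basis}, while you spell out the underlying reason (that $P_n(G)\cong\mathbb{F}G^n\otimes P_n$ is torsion-free over the domain $P_n$); these are the same argument.
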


\begin{proof}
Take a central element $z=\sum_{w\in S_n}z_ww\in\mathcal{H}_n(G),$
where $z_w=\sum d_{g,\alpha}gx^{\alpha}\in P_n(G)$. Let $\tau$ be
maximal with respect to the Bruhat order such that $z_{\tau}\neq 0$.
Assume $\tau\neq 1$. Then there exists $i\in\{1,2,\ldots,n\}$ with
$\tau(i)\neq i$. Then by Lemma \ref{spanned set},
$$
x_iz-zx_i=z_{\tau}(x_i-x_{\tau i})\tau+\sum a_{g,
\alpha,w}gx^{\alpha}w,
$$
where the sum is over $g\in G^n, x^{\alpha}\in P_n$ and $w\in S_n$
with $w\ngeq \tau$ in the Bruhat order. So, by Theorem \ref{PBW
basis}, $z_{\tau}=0$ which is a contradiction. Hence, we must have
$\tau=1$ and $z\in P_n(G)$.
\end{proof}

Let $G_*=\{C_1,\ldots, C_s\}$ denote the set of all conjugacy
classes of $G$. We set
\begin{eqnarray*} \label{eq:I}
\mathcal{I}=\{\underline{i}=(i_1,\ldots,i_n) \mid 1\leq
i_1,\ldots,i_n \leq s \}
\end{eqnarray*}
with an $S_n$-action given by $\sigma
\underline{i}=(i_{\sigma^{-1}1},\ldots,i_{\sigma^{-1}n})$ for
$\sigma \in S_n$. Then the set $(G^n)_*$ of conjugacy classes of
$G^n$ is
$$
C_{\underline{i}} :=\{g=(g_1,\ldots,g_n) \mid g_k\in C_{i_k},1\leq
k\leq n\}, \quad \underline{i}\in\mathcal{I}.
$$
We shall denote the class sum
$$
\overline{C_{\underline{i}}} : =\sum_{g\in C_{\underline{i}}}g \in
\mathbb F G^n \subset \Hng.
$$

By Lemma \ref{center 0}, a central element $z$ of $\mathcal{H}_n(G)$
is of the form $z=\sum_{g\in G^n,\al
\in\mathbb{Z}^n_+}d_{g,\alpha}gx^{\alpha},$ where
$d_{g,\alpha}\in\mathbb{F}$. It follows from
$hx^{\alpha}=x^{\alpha}h$ and $hz=zh$ that
$d_{g,\alpha}=d_{hgh^{-1},\alpha}$ for all $g, h\in G^n$. Hence, $z$
can be written as
\begin{eqnarray} \label{eq:central}
z =\sum_{\underline{i} \in \mathcal I,\alpha \in\mathbb{Z}^n_+}
d_{\underline{i},\alpha} x^{\alpha}\overline{C_{\underline{i}}},
\qquad d_{\underline{i},\alpha} \in \mathbb F.
\end{eqnarray}

\begin{thm}\label{center}
The center of $\mathcal{H}_n(G)$ consists of elements of the form
\eqref{eq:central} whose coefficients $d_{\underline{i},\alpha}$ are
$S_n$-invariant, i.e., $ d_{w \underline{i}, w \alpha}
=d_{\underline{i},\alpha}$ for all $w\in S_n, \underline{i} \in
\mathcal I,$ and $\alpha \in\mathbb{Z}^n_+.$
\end{thm}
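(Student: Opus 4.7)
The plan is to start from Lemma~\ref{center 0}, which places any central $z \in \mathcal{H}_n(G)$ inside $P_n(G) = \mathbb{F}G^n \otimes P_n$. Commuting $z$ with each element of $G^n$ (using that $x_i$ commutes with $\mathbb{F}G^n$ by \eqref{comm:xg}) forces the $\mathbb{F}G^n$-component of $z$ to lie in the center $Z(\mathbb{F}G^n)$. Since $\{\overline{C_{\underline{i}}}\}_{\underline{i} \in \mathcal I}$ is a basis of $Z(\mathbb{F}G^n)$, any such $z$ is uniquely of the form \eqref{eq:central}. The remaining task is to characterize when an element of that form commutes with each simple reflection $s_j$, $1 \le j \le n-1$.

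For this, I compute $s_j z - z s_j$ directly using the relation $s_j f = ({}^{s_j}f) s_j + t_{j,j+1} \partial_j^*(f)$ for $f \in P_n$, a specialization of \eqref{symmetric group and polynomial} with $\partial_j^*(f) := (f - {}^{s_j}f)/(x_{j+1}-x_j)$, together with $s_j \overline{C_{\underline{i}}} = \overline{C_{s_j\underline{i}}} s_j$ and $t_{j,j+1} \overline{C_{\underline{i}}} = \overline{C_{s_j\underline{i}}} t_{j,j+1}$ (consequences of \eqref{SnGn} and \eqref{t and g}). After reindexing, this yields
\[
s_j z - z s_j = \sum_{\underline{i}, \alpha} (d_{s_j\underline{i}, s_j\alpha} - d_{\underline{i}, \alpha})\, x^\alpha \overline{C_{\underline{i}}} s_j \;+\; \sum_{\underline{i}, \alpha} d_{s_j\underline{i}, \alpha}\, \partial_j^*(x^\alpha)\, \overline{C_{\underline{i}}} t_{j,j+1}.
\]
By the PBW basis (Theorem~\ref{PBW basis}) the two summands lie in complementary subspaces (distinguished by whether $s_j$ appears), so $z$ commutes with $s_j$ iff both vanish. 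The vanishing of the first forces $d_{s_j\underline{i}, s_j\alpha} = d_{\underline{i}, \alpha}$ for all $\underline{i}, \alpha$; letting $j$ range over $1, \ldots, n-1$ then yields the desired $S_n$-invariance since $s_1, \ldots, s_{n-1}$ generate $S_n$.

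The main obstacle is showing that the second summand vanishes automatically once $S_n$-invariance is imposed. Setting $f_{\underline{i}} := \sum_\alpha d_{\underline{i}, \alpha} x^\alpha$, invariance gives $f_{s_j\underline{i}} = {}^{s_j} f_{\underline{i}}$, and pairing the indices $\underline{i}$ with $s_j\underline{i}$ (fixed points contribute zero since then $\partial_j^*(f_{\underline{i}}) = 0$) collapses the second sum into $\sum \partial_j^*(f_{\underline{i}})\,[\overline{C_{\underline{i}}} - \overline{C_{s_j\underline{i}}}]\, t_{j,j+1}$. It therefore suffices to establish the key identity $\overline{C_{\underline{i}}}\,t_{j,j+1} = \overline{C_{s_j\underline{i}}}\,t_{j,j+1}$ in $\mathbb{F}G^n$. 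Since only the $j$-th and $(j{+}1)$-th tensor factors are affected, this reduces to proving $(\overline{A} \otimes \overline{B})\,t = (\overline{B} \otimes \overline{A})\,t$ in $\mathbb{F}G \otimes \mathbb{F}G$ for conjugacy-class sums $\overline{A}, \overline{B}$ and $t = \sum_{h \in G} h \otimes h^{-1}$. A direct computation shows that the coefficient of $u \otimes v$ in $(\overline{A} \otimes \overline{B})\,t$ equals $|\{a \in A : va^{-1}u \in B\}|$; since $v^{-1}(va^{-1}u)v = a^{-1}uv$ is conjugate to $va^{-1}u$, this coincides with the coefficient of $uv$ in $\overline{A}\,\overline{B}$. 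By the same computation on the other side, the coefficient of $u \otimes v$ in $(\overline{B} \otimes \overline{A})\,t$ equals the coefficient of $uv$ in $\overline{B}\,\overline{A}$. The two are equal because $\overline{A}, \overline{B}$ lie in $Z(\mathbb{F}G)$ and hence commute, completing the proof.
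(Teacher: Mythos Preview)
Your proof is correct and follows essentially the same strategy as the paper: split $s_jz - zs_j$ into an $s_j$-part and a $t_{j,j+1}$-part via the PBW basis, read off $S_n$-invariance from the vanishing of the first, and check that the second vanishes automatically under invariance. For that last step the paper's argument is shorter: your key identity $\overline{C_{\underline{i}}}\,t_{j,j+1} = \overline{C_{s_j\underline{i}}}\,t_{j,j+1}$ follows in one line by combining the relation $t_{j,j+1}\overline{C_{\underline{i}}} = \overline{C_{s_j\underline{i}}}\,t_{j,j+1}$ (which you already cited from \eqref{t and g}) with $\overline{C_{\underline{i}}}\,t_{j,j+1} = t_{j,j+1}\,\overline{C_{\underline{i}}}$ (since $\overline{C_{\underline{i}}}$ is central in $\mathbb{F}G^n$ and $t_{j,j+1}\in\mathbb{F}G^n$), so the explicit coefficient computation at the end is not needed.
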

\begin{proof}
Take a central element $z \in \Hng$ of the form \eqref{eq:central}.
Applying (\ref{symmetric group and polynomial}), we get
$$
s_1 z
=\sum_{\underline{i},\alpha}d_{\underline{i},\alpha}(x^{s_1\alpha})
({}^{s_1}\overline{C_{\underline{i}}})s_1
+\sum_{\underline{i},\alpha}d_{\underline{i},\alpha} t_{12}
\frac{x^{\alpha}-  x^{s_1\alpha} }{x_{2}-x_1}
\overline{C_{\underline{i}}} \,.
$$
By Theorem~\ref{PBW basis}, $s_1z=zs_1$ (or rather $s_1zs_1 =z$) is
equivalent to identities \eqref{d1}-\eqref{d2}:
\begin{align}
\sum_{\underline{i},\alpha}d_{\underline{i},\alpha} (
x^{s_1\alpha}) ({}^{s_1}\overline{C_{\underline{i}}})
&=\sum_{\underline{i},\alpha} d_{\underline{i},\alpha}x^{\alpha}
\overline{C_{\underline{i}}} \;\; ( =: z), \label{d1}
 \\
\sum_{\underline{i},\alpha}d_{\underline{i},\alpha}t_{12}\frac{x^{\alpha}-
 x^{s_1\alpha} }{x_2-x_1}\overline{C_{\underline{i}}}&=0.
 \label{d2}
\end{align}

We claim that (\ref{d1}) implies (\ref{d2}). Indeed, assuming
(\ref{d1}) we obtain that
\begin{align*}
t_{12} z & = zt_{12} \quad \text{ since } z \text{ is central}, \\
&=\sum_{\underline{i},\alpha}d_{\underline{i},\alpha} (
x^{s_1\alpha}) ({}^{s_1}\overline{C_{\underline{i}}} )
t_{12} \quad \text{ by } \eqref{d1}, \\
&=t_{12} \sum_{\underline{i},\alpha}d_{\underline{i},\alpha} (
x^{s_1\alpha}) \overline{C_{\underline{i}}} \quad \text{ by }
\eqref{comm:xg} \text{ and } (\ref{t and g}).
\end{align*}
This is a variant of \eqref{d2} with the denominator cleared.

Note now that (\ref{d1}) holds if and only if $d_{s_1
\underline{i},s_1 \alpha} =d_{\underline{i},\alpha}$. Applying the
same procedure to $s_kz=zs_k$, we obtain that $d_{s_k
\underline{i},s_k \alpha} =d_{\underline{i},\alpha}$ and hence $d_{w
\underline{i},w \alpha} =d_{\underline{i},\alpha}$ for any $w\in
S_n$.

Reversing the above arguments, an element $z \in \Hng$ of the form
\eqref{eq:central} satisfying the $S_n$-invariant property $d_{w
\underline{i},w \alpha} =d_{\underline{i},\alpha}$ is indeed
central.
\end{proof}

\begin{rem}
By Theorem~\ref{center}, the center of $\mathcal{H}_n(G)$ contains
the ring $\Lambda_n$ of symmetric polynomials in $x_1,\ldots,
x_n$. Hence by Theorem~\ref{PBW basis} the algebra $\Hng$ is
finitely generated as a module over its center, which implies that
every simple $\Hng$-module is finite-dimensional.
\end{rem}
\section{An equivalence of module categories}\label{morita equivalence and simples}

In this section, we establish a key category equivalence which
relates the wreath Hecke algebra to degenerate affine Hecke
algebras.

\subsection{A useful lemma}

Let $G^*=\{V_1,\ldots,V_r\}$ be a complete set of pairwise
non-isomorphic finite dimensional simple $\mathbb{F}G$-modules, and
set
$$
\text{dim}_{\mathbb{F}}V_k=d_k, \qquad 1\leq k\leq r.
$$
It is know by elementary Clifford theory that the
$\mathbb{F}G^2$-module $V_k^{\otimes 2}$ affords a simple
$\mathbb{F}G_2$-module by letting $s_1 =(1 \, 2)$ act as the
operator $P$ which permutes the two tensor factors. Also, for
$1\leq k\neq l\leq r$,
$\text{ind}^{\mathbb{F}G_2}_{\mathbb{F}G^2}(V_k\otimes V_l)
=V_k\otimes V_l\oplus V_l\otimes V_k$ is a simple
$\mathbb{F}G_2$-module where $(1\,2)$ acts as the permuting
operator $P$.

\begin{lem}\label{weig-t-0}
Retain the above notations. Then,
\begin{enumerate}
\item $t_{12}=0$ when acting on a simple $\mathbb{F}G^2$-module
$V_k\otimes V_l$ for $1\leq k\neq l\leq r$.
 \item
$t_{12} =c_k P$ when acting on the $\mathbb{F}G^2$-module
$V_k^{\otimes 2}$, where the scalar $c_k\in\mathbb{F}$ satisfies
$d_k c_k= |G|$ in $\F$.
\end{enumerate}
\end{lem}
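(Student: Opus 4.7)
The plan is to combine the twisted equivariance $t_{12}(g_1,g_2)=(g_2,g_1)\,t_{12}$ --- which is \eqref{t and g} specialised to $n=2$, $i=1$ --- with Schur's lemma for simple $\mathbb{F}G^2$-modules. Since the tensor-flip $P$ satisfies $P(g_1,g_2)=(g_2,g_1)P$ on any tensor product, the composition $P\circ t_{12}$ is a genuine $\mathbb{F}G^2$-module homomorphism $V_k\otimes V_l\to V_l\otimes V_k$.

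The second input is that, because $\mathbb{F}$ is algebraically closed and each $V_k$ is absolutely simple, each tensor $V_k\otimes V_l$ is an absolutely simple $\mathbb{F}G^2$-module (Schur on each factor forces $\mathrm{End}_{\mathbb{F}G^2}(V_k\otimes V_l)=\mathbb{F}$, and simplicity then follows from Jacobson density), and the isomorphism type of $V_k\otimes V_l$ as an $\mathbb{F}G^2$-module determines the ordered pair $(k,l)$ (restrict to either factor and apply Krull--Schmidt). Part (1) is then immediate: for $k\neq l$ the modules $V_k\otimes V_l$ and $V_l\otimes V_k$ are non-isomorphic simples, so Schur's lemma forces $P\circ t_{12}=0$, and since $P$ is invertible $t_{12}=0$ on $V_k\otimes V_l$. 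For part (2), the same argument yields $P\circ t_{12}=c_k\cdot\mathrm{id}$ on $V_k^{\otimes 2}$ for some scalar $c_k\in\mathbb{F}$, whence $t_{12}=c_k P^{-1}=c_k P$ (using $P^2=1$).

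To pin down $c_k$, I would take traces on $V_k^{\otimes 2}$: the right-hand side has trace $c_k d_k^2$, while by cyclicity $\mathrm{tr}(P\circ t_{12})=\mathrm{tr}(t_{12}\circ P)$ can be expanded in a basis of $V_k$. Writing matrix entries $\rho_{ij}(h)$ for $\rho_k$ and using the identity $\sum_b\rho_{ab}(h^{-1})\rho_{ba}(h)=(\rho_k(h^{-1})\rho_k(h))_{aa}=1$, the double sum collapses to $\mathrm{tr}(t_{12}P)=\sum_{h\in G}d_k=|G|\,d_k$ in $\mathbb{F}$. Hence $c_k d_k^2=|G|\,d_k$ in $\mathbb{F}$, and cancelling $d_k$ (a unit in $\mathbb{F}$ in the setting of interest, e.g.\ whenever $p\nmid|G|$) yields $d_k c_k=|G|$ in $\mathbb{F}$. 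I do not foresee a genuine obstacle: the core of the proof is reading the equivariance off \eqref{t and g} and invoking Schur's lemma, with the trace computation merely fixing the scalar.
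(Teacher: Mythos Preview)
Your argument is correct and close in spirit to the paper's, though the packaging differs. The paper works one level up: it observes that $\xi_2=t_{12}\cdot(1\,2)$ is \emph{central} in $\mathbb{F}G_2$, extends $V_k\otimes V_l$ (for $k\neq l$) to the simple $\mathbb{F}G_2$-module $\mathrm{ind}_{\mathbb{F}G^2}^{\mathbb{F}G_2}(V_k\otimes V_l)$ and $V_k^{\otimes 2}$ to a simple $\mathbb{F}G_2$-module with $(1\,2)$ acting as $P$, and then applies Schur's lemma for $\mathbb{F}G_2$-modules. Your twisted-equivariance statement $t_{12}(g_1,g_2)=(g_2,g_1)t_{12}$ is exactly what centrality of $\xi_2$ unpacks to at the $\mathbb{F}G^2$-level, so the two arguments are really the same Schur-lemma step viewed from $G_2$ versus $G^2$. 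For the scalar, the paper's trace computation is more conceptual than yours: rather than expanding in matrix entries, it notes that every summand $((g,g^{-1}),(1\,2))$ of $\xi_2$ is conjugate in $G_2$ to $(1\,2)$, hence has the same character value $\mathrm{tr}_{V_k^{\otimes 2}}(P)=d_k$, giving $\mathrm{tr}(\xi_2)=|G|\,d_k$ immediately. Your basis computation reaches the same identity $c_k d_k^2=|G|\,d_k$.

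One small point on your last step: you hedge the cancellation of $d_k$ to the case $p\nmid|G|$, but the conclusion $d_k c_k=|G|$ in $\mathbb{F}$ holds unconditionally. If $p\mid d_k$ then necessarily $p\mid|G|$ (since $p\nmid|G|$ would make $\mathbb{F}G$ semisimple, forcing $d_k\mid|G|$), so both sides of $d_k c_k=|G|$ vanish in $\mathbb{F}$ and the identity is automatic. The paper makes the same cancellation without comment.
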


\begin{proof}
Note that the Jucys-Murphy element $\xi_2=\sum_{g\in
G}((g,g^{-1}),(1\,2))$ is central in $\mathbb{F}G_2$. By Schur's
Lemma, $\xi_2$ acts on the simple $\mathbb{F}G_2$-modules
$V_k^{\otimes 2}$ and
$\text{ind}^{\mathbb{F}G_2}_{\mathbb{F}G^2}(V_k\otimes V_l)$ as
scalars. On $\text{ind}^{\mathbb{F}G_2}_{\mathbb{F}G^2}(V_k\otimes
V_l)$, $\xi_2$ maps the subspace $V_k\otimes V_l$ to the subspace
$V_l\otimes V_k$, and hence $\xi_2$ acts as zero. Then, $t_{12}
=(1\,2)\xi_2$ acts as zero on
$\text{ind}^{\mathbb{F}G_2}_{\mathbb{F}G^2}(V_k\otimes V_l)$, and
hence as zero on $V_k\otimes V_l$. This proves (1).

Assume that $\xi_2$ acts on $V_k^{\otimes 2}$ as a scalar $c_k$.
Then on one hand, the trace of $\xi_2$ on $V_k^{\otimes 2}$ is
$d_k^2c_k$, and on the other hand, it is also equal to $|G|\cdot
\text{Tr}|_{V_k^{\otimes 2}}(1\,2) $ since $((g,g^{-1}),(12))$ is
conjugate to $(12)$ in $G_2$ for each $g\in G$. Note that
$\text{Tr}|_{V_k^{\otimes 2}}(1\,2)=d_k$ since $(12)$ acts as the
permutation operator $P$. Therefore we have
$$
d_k^2c_k=|G|\cdot \text{Tr}|_{V_k^{\otimes 2}}(1\,2) =d_k|G|,
$$
which is equivalent to $d_k c_k= |G|$. Now (2) follows by noting
again $t_{12} =(1\,2)\xi_2$.
\end{proof}

\begin{rem}
If $p$ is prime to $|G|$ then $c_k= |G|/d_k\in\mathbb{I}-\{0\}$.
However  when $p$ divides $|G|$ it is possible that $c_k=0$ (e.g.
for the trivial module). It is also possible that $d_k$ does not
divide $|G|$ (in this case $c_k=0$ too). Let $p=7$ and
$G=SL(2,\mathbb{F}_7)$, which is of order $6\cdot 7\cdot 8$. An
irreducible $\F G$-module $V_m$ of dimension $m$, for each $1\leq
m< 7$, is given by the $\F$-vector space of homogeneous
polynomials of degree $m-1$ in two variables. This example is
kindly provided by L.~Scott.


\end{rem}

Clearly, $\{V_{i_1}\otimes\cdots\otimes V_{i_n}| 1\leq
i_1,\ldots,i_n\leq r\}$ forms a complete set of pairwise
non-isomorphic simple $\mathbb{F}G^n$-modules. Denote by $P_{kl}$
the operator on $V_{i_1}\otimes\cdots\otimes V_{i_n}$ which permutes
the $k$th and $l$th factors. Recall the definition of $t_{kl}$ from
\eqref{t_ij}.

\begin{cor} \label{weig-t-1}
On $V_{i_1}\otimes\cdots\otimes V_{i_n}$, $t_{kl}$ acts as $c_{i_k}
P_{kl}$ if $i_k = i_l$; otherwise $t_{kl}$ acts as zero.
\end{cor}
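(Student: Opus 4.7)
The plan is to reduce the $n$-fold tensor factor computation to the two-fold case already handled in Lemma~\ref{weig-t-0}. First I would observe that the element $t_{kl}=\sum_{h\in G}h^{(k)}(h^{-1})^{(l)}$ involves only the $k$-th and $l$-th factors of the group $G^n$, so its action on a simple tensor $v_1\otimes\cdots\otimes v_n\in V_{i_1}\otimes\cdots\otimes V_{i_n}$ leaves all tensor slots $j\neq k,l$ untouched and acts on the $(k,l)$-slots as the operator $t_{12}$ would act on $V_{i_k}\otimes V_{i_l}$ (up to the obvious relabeling). In other words, $t_{kl}$ decomposes as $t_{12}^{(k,l)}\otimes \mathrm{id}$ under the evident factorization of $V_{i_1}\otimes\cdots\otimes V_{i_n}$.

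Given this reduction, the case $i_k\neq i_l$ follows directly from part (1) of Lemma~\ref{weig-t-0}: $t_{12}$ annihilates $V_{i_k}\otimes V_{i_l}$, so $t_{kl}$ annihilates the whole tensor product. Similarly, when $i_k=i_l$, part (2) of Lemma~\ref{weig-t-0} tells us $t_{12}$ acts on $V_{i_k}^{\otimes 2}$ as $c_{i_k}P$, where $P$ swaps the two factors; under the identification above this swap on the $(k,l)$-slots is precisely $P_{kl}$, yielding $t_{kl}=c_{i_k}P_{kl}$ on $V_{i_1}\otimes\cdots\otimes V_{i_n}$.

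There is essentially no obstacle here, since the corollary is just a two-factor lemma dressed up with extra spectator tensor slots; the only small care needed is to make the factorization $t_{kl}=t_{12}^{(k,l)}\otimes\mathrm{id}$ completely explicit so that both the vanishing statement and the identification of the swap operator $P$ with $P_{kl}$ are unambiguous.
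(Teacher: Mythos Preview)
Your proposal is correct and matches the paper's approach: the paper states this as a corollary with no proof, treating it as immediate from Lemma~\ref{weig-t-0}, and your reduction to the two-factor case via the spectator-slot factorization is exactly the intended (trivial) argument.
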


\subsection{Structure of $\mathcal{H}_n(G)$-modules}
\label{sec:Clifford}

Set
$$
\mathbb{I}:=\mathbb{Z}\cdot1\subset\mathbb{F}.
$$
That is, $\mathbb{I} =\{0, 1, \ldots, p-1\}$ for $p>0$ and
$\mathbb{I} =\Z$ for $p=0$.

For an algebra $R$, we denote by $R$-{\bf mod} the category of
finite dimensional left $R$-modules. Denote by $\HngMods$ the full
subcategory of $\mathcal{H}_n(G)$-{\bf mod} consisting of finite
dimensional $\mathcal{H}_n(G)$-modules which are semisimple when
restricted to the subalgebra $\mathbb{F}G^n$. Denote the set of
$r$-tuple compositions of $n$ by
$$
\mathcal{C}_n^r:=\{\widehat{n}=(n_1,\ldots,n_r) \mid
n_1,\ldots,n_r\in \Z_+, n_1+\cdots+n_r=n\}.
$$
For each $\widehat{n}\in\mathcal{C}_n^r$, let
$V(\widehat{n})=V_1^{\otimes n_1}\otimes\cdots\otimes V_r^{\otimes
n_r}$ be the corresponding simple $\mathbb{F}G^n$-module.
Moreover, denote by $S_{\widehat{n}} = S_{n_1}\times\cdots\times
S_{n_r}$ be the corresponding Young subgroup of $S_n$ and let
$\Theta(\widehat{n})$ be a complete set of representatives of left
cosets of $S_{\widehat{n}}$ in $S_n$.

Define $\mathcal{H}_{\widehat{n}}(G)$ to be the subalgebra of
$\mathcal{H}_n(G)$ generated by $G^n,x_1,\ldots,x_n$ and
$S_{\widehat{n}}$. Then
$$
\mathcal{H}_{\widehat{n}}(G) \cong\mathcal{H}_{n_1}(G)
\otimes\mathcal{H}_{n_2}(G) \otimes\cdots\otimes
\mathcal{H}_{n_r}(G).
$$
For $G=\{1\}$, we drop $G$ and denote $\mathcal{H}_{\widehat{n}}
=\mathcal{H}_{\widehat{n}}(G)$. We denote by $\HnghatMods$ the
full subcategory of $\mathcal{H}_{\n}(G)$-{\bf mod} consisting of
finite dimensional $\mathcal{H}_{\n}(G)$-modules which are
semisimple when restricted to the subalgebra $\mathbb{F}G^n$.

For $M\in \mathcal{H}_n(G)$-{\bf mod}$^s$, let $I_{\widehat{n}}M$ be
the isotypical subspace of $V(\widehat{n})$ in $M$, that is, the sum
of all simple $\mathbb{F}G^n$-submodule of $M$ isomorphic to
$V(\widehat{n})$. Denote
$$
M_{\widehat{n}} :=\sum_{\pi\in S_n} \pi (I_{\widehat{n}}M).
$$

\begin{lem} \label{ealphaM}
Let $\widehat{n}\in\mathcal{C}_n^r$ and $M\in\mathcal{H}_n(G)$-{\bf
mod}$^s$. Then, $I_{\widehat{n}}M$ is an
$\mathcal{H}_{\widehat{n}}(G)$-submodule and $M_{\widehat{n}}$ is an
$\mathcal{H}_n(G)$-submodule of $M$. Moreover, $M_{\widehat{n}}\cong
\text{ind}^{\mathcal{H}_n(G)}_{\mathcal{H}_{\widehat{n}}(G)}
(I_{\widehat{n}}M)$.
\end{lem}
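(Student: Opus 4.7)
The plan is to prove the three assertions in order: $(i)$ $I_{\widehat{n}}M$ is $\mathcal{H}_{\widehat{n}}(G)$-stable, $(ii)$ $M_{\widehat{n}}$ is $\mathcal{H}_n(G)$-stable, and $(iii)$ $M_{\widehat{n}}$ is isomorphic to the induced module via a dimension matching argument.

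For $(i)$, I would note that $I_{\widehat{n}}M$ is tautologically an $\mathbb{F}G^n$-submodule, so it suffices to handle the remaining generators. Relation~\eqref{comm:xg} says each $x_i$ commutes with $\mathbb{F}G^n$, so multiplication by $x_i$ is an $\mathbb{F}G^n$-homomorphism and hence preserves the $V(\widehat{n})$-isotypic component. For a simple reflection $s_j\in S_{\widehat{n}}$, the moved indices $j,j+1$ lie in a common block of $\widehat{n}$; using~\eqref{SnGn} one sees that $s_j$ carries a $V(\widehat{n})$-isotypic vector into a vector lying in the isotypic component of the twist ${}^{s_j}V(\widehat{n})$. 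Since $s_j$ merely swaps two identical tensor factors of $V(\widehat{n})$, this twist is isomorphic to $V(\widehat{n})$ itself, and so $s_j I_{\widehat{n}}M\subseteq I_{\widehat{n}}M$.

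For $(ii)$, I would use Theorem~\ref{PBW basis} to reduce to checking that $x^{\alpha}gw\cdot\pi v\in M_{\widehat{n}}$ for every $v\in I_{\widehat{n}}M$ and $\pi\in S_n$, and then invoke Lemma~\ref{spanned set} to straighten $x^{\alpha}gw\pi$ into a sum $\sum_{u\le w\pi} u\cdot h_u$ with $h_u\in P_n(G)$. By $(i)$ each $h_uv\in I_{\widehat{n}}M$, hence each contribution $u(h_uv)\in uI_{\widehat{n}}M\subseteq M_{\widehat{n}}$, giving the desired closure.

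For $(iii)$, the multiplication map defines an $\mathcal{H}_n(G)$-linear surjection $\Phi\colon \mathrm{ind}^{\mathcal{H}_n(G)}_{\mathcal{H}_{\widehat{n}}(G)} I_{\widehat{n}}M\twoheadrightarrow M_{\widehat{n}}$. The PBW decomposition yields $\mathcal{H}_n(G)=\bigoplus_{\sigma\in\Theta(\widehat{n})}\sigma\,\mathcal{H}_{\widehat{n}}(G)$ as a right $\mathcal{H}_{\widehat{n}}(G)$-module, so $\dim\mathrm{ind}=|\Theta(\widehat{n})|\cdot\dim I_{\widehat{n}}M$. The key observation on the target side is that for distinct $\sigma_1,\sigma_2\in\Theta(\widehat{n})$ the twisted $\mathbb{F}G^n$-modules ${}^{\sigma_1}V(\widehat{n})$ and ${}^{\sigma_2}V(\widehat{n})$ assign different $n$-tuples of $V_k$'s to the positions $1,\ldots,n$ and are therefore non-isomorphic; combined with the $\mathbb{F}G^n$-semisimplicity built into $\HngMods$, the subspaces $\sigma I_{\widehat{n}}M$ sit in pairwise distinct isotypic components of $M$, so the sum $M_{\widehat{n}}=\sum_{\sigma\in\Theta(\widehat{n})}\sigma I_{\widehat{n}}M$ is direct and of the same dimension as the induced module. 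Hence $\Phi$ is forced to be an isomorphism.

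The hard step will be $(ii)$: because $x_i$ does not commute with $S_n$, one cannot reduce the stability of $M_{\widehat{n}}$ to a per-generator check, and Lemma~\ref{spanned set} is precisely what is needed to push a polynomial past an arbitrary permutation while remaining inside $\sum_{\pi}\pi I_{\widehat{n}}M$.
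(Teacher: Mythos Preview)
Your proof is correct and follows essentially the same line as the paper's. Part (i) matches exactly; part (iii) is the same direct-sum/dimension argument (the paper phrases the map via Frobenius reciprocity, you via multiplication, but these coincide). The only cosmetic difference is in (ii): you invoke Lemma~\ref{spanned set} to straighten $x^{\alpha}gw\pi$ explicitly, whereas the paper simply says this ``follows from definition''---implicitly because $\mathcal{H}_n(G)=\sum_{\sigma}\sigma\,\mathcal{H}_{\widehat{n}}(G)$ (from PBW) gives $M_{\widehat{n}}=\mathcal{H}_n(G)\cdot I_{\widehat{n}}M$ at once. So your remark that (ii) is the ``hard step'' overstates things; once (i) is known, (ii) is immediate and does not require the full straightening argument.
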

\begin{proof}
Being commutative with $\mathbb{F}G^n$, each $x_i$ maps a simple
$\mathbb{F}G^n$-submodule of $M$ either to zero or to an isomorphic
copy. Hence $I_{\widehat{n}}M$ is invariant under the action of the
subalgebra $P_n$. Since each $\pi\in S_{\widehat{n}}$ maps a simple
$\mathbb{F}G^n$-submodule of $M$ isomorphic to $V(\widehat{n})$ to
another isomorphic one, $I_{\widehat{n}}M$ is invariant under the
action of $S_{\n}$. Hence $I_{\widehat{n}}M$ is an
$\mathcal{H}_{\widehat{n}}(G)$-submodule, since
$\mathcal{H}_{\widehat{n}}(G)$ is generated by $\F G^n, P_n$ and
$S_{\n}.$

It then follows from definition that $M_{\widehat{n}}$ is an
$\mathcal{H}_n(G)$-submodule of $M$.

We have a nonzero $\mathcal{H}_n(G)$-homomorphism $\phi:
\text{ind}^{\mathcal{H}_n(G)}_{\mathcal{H}_{\widehat{n}}(G)}
I_{\widehat{n}}M\rightarrow M_{\widehat{n}}$ by Frobenius
reciprocity. Observe that
$$
M_{\widehat{n}}=\sum_{\pi\in S_n}\pi(I_{\widehat{n}}M)
 =\bigoplus_{\tau\in\Theta(\widehat{n})}\tau(I_{\widehat{n}}M).
$$
Hence $\phi$ is surjective, and then an isomorphism by a dimension
counting argument.
\end{proof}

\begin{lem}\label{Malpha}
We have the following decomposition in $\HngMods$:
$$
M=\bigoplus_{\widehat{n}\in\mathcal{C}_n^r}M_{\widehat{n}}.
$$
\end{lem}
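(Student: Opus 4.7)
The plan is to reduce the claimed decomposition to the $\F G^n$-isotypic decomposition of $M$ and then reorganize the summands according to $S_n$-orbits on the parameter set for simple $\F G^n$-modules. Since $M \in \HngMods$, $M$ is $\F G^n$-semisimple; as the simple $\F G^n$-modules are exactly the tensor products $V_{\underline{i}} := V_{i_1} \otimes \cdots \otimes V_{i_n}$ indexed by tuples $\underline{i} = (i_1, \ldots, i_n) \in \{1, \ldots, r\}^n$, this gives a canonical direct-sum decomposition
$$M = \bigoplus_{\underline{i}} I_{\underline{i}} M,$$
where $I_{\underline{i}} M$ denotes the $V_{\underline{i}}$-isotypic subspace of $M$.

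Next I would analyze how $\pi \in S_n \subset \Hng$ permutes these isotypic components. From the commutation rule \eqref{SnGn} one extracts the identity $g \pi = \pi \cdot {}^{\pi^{-1}}g$ inside $\Hng$, so for any $\F G^n$-submodule $N \cong V_{\underline{i}}$ of $I_{\underline{i}}M$ the space $\pi N$ carries the twisted $\F G^n$-action obtained by composing with $g \mapsto {}^{\pi^{-1}}g$. Since ${}^{\pi^{-1}}g = (g_{\pi(1)}, \ldots, g_{\pi(n)})$, a direct unpacking of the tensor factors shows $\pi N \cong V_{\pi \underline{i}}$ with $\pi \underline{i} = (i_{\pi^{-1}(1)}, \ldots, i_{\pi^{-1}(n)})$; applying the same argument to $\pi^{-1}$ then yields the equality
$$\pi \bigl( I_{\underline{i}} M \bigr) = I_{\pi \underline{i}} M.$$

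I would then group the canonical decomposition by $S_n$-orbits on the set of tuples. Each orbit contains a unique sorted representative $\underline{i}(\widehat{n}) := (1^{n_1} 2^{n_2} \cdots r^{n_r})$ for some $\widehat{n} = (n_1, \ldots, n_r) \in \Cnr$, and $V_{\underline{i}(\widehat{n})}$ is by definition $V(\widehat{n})$. Consequently
$$M = \bigoplus_{\widehat{n} \in \Cnr} \Bigl( \bigoplus_{\underline{i} \in S_n \cdot \underline{i}(\widehat{n})} I_{\underline{i}} M \Bigr),$$
and the inner sum coincides with $\sum_{\pi \in S_n} \pi(I_{\widehat{n}} M) = M_{\widehat{n}}$ by the previous paragraph, producing the claimed decomposition.

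The only substantive step is the identification $\pi(I_{\underline{i}} M) = I_{\pi \underline{i}} M$, where the commutation relation \eqref{SnGn} is used essentially; everything else is bookkeeping. In particular, directness is automatic, since distinct $\widehat{n} \in \Cnr$ index disjoint $S_n$-orbits on tuples and the corresponding $I_{\underline{i}} M$ are already disjoint summands of the canonical $\F G^n$-isotypic decomposition.
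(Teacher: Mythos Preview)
Your proof is correct and follows essentially the same approach as the paper's. The paper's proof is a one-sentence observation that each $M_{\widehat{n}}$ is precisely the direct sum of those $\F G^n$-isotypic components of $M$ whose underlying simple module has exactly $n_i$ tensor factors isomorphic to $V_i$; your argument spells out the key identity $\pi(I_{\underline{i}}M) = I_{\pi\underline{i}}M$ (via \eqref{SnGn}) that justifies this observation and then carries out the bookkeeping explicitly.
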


\begin{proof}
Let $M \in \HngMods$. By definition,  as an $\F G^n$-module $M$ is
semisimple. Observe that each $M_{\n}$ is the direct sum of those
isotypical components of simple $\F G^n$-modules which contain
exactly $n_i$ tensor factors isomorphic to $V_i$ for $1\le i \le r$.
Now the lemma follows.
\end{proof}

\subsection{Algebras $A_{\n,r}$ versus $\mathcal{H}_{\widehat{n}}(G)$}

We define the following algebras
\begin{eqnarray} \label{Anr}
A_{n,r} =\bigoplus_{\widehat{n} \in\mathcal{C}_n^r} A_{\n,r},
\qquad A_{\n,r} = {}^{c_1}{\mathcal{H}}_{n_1} \otimes\cdots\otimes
{}^{c_r}{\mathcal{H}}_{n_r},
\end{eqnarray}
where, for $1\le k \le r$, we denote
\begin{eqnarray*}
{}^{c_k}{\mathcal{H}}_{n_k} = \left \{
 \begin{array}{ll}
 {\mathcal{H}}_{n_k}, & \text{ if } c_k \neq 0 \\
 {}^0 {\mathcal{H}}_{n_k}, & \text{ if } c_k =0.
 \end{array}
 \right.
\end{eqnarray*}

Below, we shall denote the polynomial generators in $\Hn$ and in
$A_{n,r}$ by $y_1,\ldots,y_n$ to distinguish from $x_1,\ldots,x_n$
in $\mathcal{H}_n(G)$.

For each $\n\in\mathcal{C}_n^r$, let us denote by $l_k$ the unique
integer such that $n_1+\cdots+n_{l_k-1}+1\leq k\leq
n_1+\cdots+n_{l_k}$, for each $1\le k \le n$.

\begin{prop}\label{Hom functor}
Let $\widehat{n}\in\mathcal{C}_n^r$ and $N \in \HnghatMods$. Then
$\text{Hom}_{\mathbb{F}G^n}(V(\n), N)$ is an
$A_{\widehat{n},r}$-module by letting
$$
\aligned(\pi\diamond\phi)(v_1\otimes\cdots\otimes v_n)
&=\pi\phi(v_{\pi(1)}\otimes\cdots\otimes
v_{\pi(n)}),\\
(y_k\diamond\phi)(v_1\otimes \cdots\otimes v_n) &= \left \{
\begin{array}{ll}
\frac{1}{c_{l_k}}x_k\phi(v_1\otimes\cdots\otimes v_n), & \text{ if
 } c_{l_k}\neq 0 ,\\
x_k\phi(v_1\otimes\cdots\otimes v_n), & \text{ if
 } c_{l_k}=0.
 \end{array}
 \right.
\endaligned
$$
for $\pi\in S_{\widehat{n}}, v_1\otimes\cdots\otimes v_n\in V(\n),
\phi\in\text{Hom}_{\mathbb{F}G^n}(V(\n),N)$ and $1\leq k\leq n.$
Hence, $\text{Hom}_{\mathbb{F}G^n}(V(\n), -)$ is a functor from
$\HnghatMods$ to $A_{\widehat{n},r}$-{\bf mod}.
\end{prop}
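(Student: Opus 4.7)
The plan is to verify three things in turn: (a) the formulas for $\pi\diamond\phi$ and $y_k\diamond\phi$ really produce elements of $\text{Hom}_{\mathbb{F}G^n}(V(\n),N)$; (b) the operators $\diamond$ satisfy the defining relations of $A_{\widehat n,r}$; and (c) the assignment on morphisms is compatible so we get a functor. Functoriality will be automatic since, for any $\mathcal{H}_{\widehat n}(G)$-morphism $f:N\to N'$, post-composition $\phi\mapsto f\circ\phi$ commutes with left-multiplication by $x_k$ on the target and with the action of $S_{\widehat n}$ as well as with the normalizing scalars $1/c_{l_k}$.

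For well-definedness, the $\mathbb{F}G^n$-linearity of $y_k\diamond\phi$ is immediate from \eqref{comm:xg}, which gives $x_kg=gx_k$ inside $\mathcal H_n(G)$. For $\pi\in S_{\widehat n}$ I would compute
\[
(\pi\diamond\phi)(g\cdot v_1\otimes\cdots\otimes v_n)=\pi\,\phi\bigl({}^{\pi^{-1}}g\cdot(v_{\pi(1)}\otimes\cdots\otimes v_{\pi(n)})\bigr)
=\pi\,({}^{\pi^{-1}}g)\,\phi(v_{\pi(1)}\otimes\cdots)=g\,(\pi\diamond\phi)(v),
\]
using the $\mathbb FG^n$-linearity of $\phi$ together with the wreath commutation relation $\pi\cdot{}^{\pi^{-1}}g=g\pi$ inherent in \eqref{SnGn}. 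The same computation shows that the assignment $\pi\mapsto(\pi\diamond-)$ is multiplicative in $\pi$, so it gives a genuine $\F S_{\widehat n}$-action.

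The main work is to verify the algebra relations of $A_{\widehat n,r}={}^{c_1}\mathcal H_{n_1}\otimes\cdots\otimes{}^{c_r}\mathcal H_{n_r}$. Commutativity of the $y_k$'s and commutation between $y_k$ and $s_j\in S_{\widehat n}$ for $j\neq k,k-1$ are transparent from $x_ix_j=x_jx_i$ and \eqref{Sn x}; the braid and quadratic relations on the $s_j$'s descend from \eqref{braid}; and relations between generators in different tensor factors hold because the corresponding $s_j$ and $s_{j'}$ live in disjoint index blocks of $S_{\widehat n}$ so automatically satisfy $|j-j'|>1$. The crux is the ``affine'' cross relation. For $s_j\in S_{\widehat n}$ we have $l_j=l_{j+1}=:k$, so Corollary \ref{weig-t-1} tells us that $t_{j,j+1}$ acts on $V(\widehat n)$ as $c_kP_{j,j+1}$. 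Applying \eqref{symmetric and polnomial 2} inside $\mathcal H_n(G)$ together with the $\mathbb FG^n$-linearity of $\phi$ gives
\[
(s_j\diamond y_j\diamond\phi)(v)=\tfrac{1}{c_k}s_jx_j\phi(v^{s_j})=\tfrac{1}{c_k}x_{j+1}s_j\phi(v^{s_j})-\tfrac{1}{c_k}\phi\bigl(t_{j,j+1}\,v^{s_j}\bigr),
\]
and the second summand equals $\phi(v)$ exactly because $t_{j,j+1}v^{s_j}=c_kP_{j,j+1}v^{s_j}=c_k v$. This reproduces $s_jy_j=y_{j+1}s_j-1$ when $c_k\neq 0$. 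When $c_k=0$ the normalization is absent and the same identity forces the second summand to vanish, yielding $s_jy_j=y_{j+1}s_j$, i.e.\ the defining relation of $\Hno[n_k]$. Thus the choice between $\mathcal H_{n_k}$ and ${}^0\mathcal H_{n_k}$ in the definition \eqref{Anr} of $A_{\widehat n,r}$ is exactly what is needed to absorb the action of $t_{j,j+1}$ on $V(\widehat n)$. The main obstacle is simply bookkeeping in this cross-relation; everything else is either inherited directly from $\mathcal H_n(G)$ or a routine verification.
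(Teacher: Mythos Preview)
Your proof is correct and follows essentially the same route as the paper's own argument: verify $\mathbb{F}G^n$-linearity of $\pi\diamond\phi$ via the wreath relation \eqref{SnGn}, and check the cross relation $s_jy_j=y_{j+1}s_j-1$ (resp.\ $s_jy_j=y_{j+1}s_j$) by applying \eqref{symmetric and polnomial 2} and using Corollary~\ref{weig-t-1} to identify the action of $t_{j,j+1}$ on $V(\widehat n)$ as $c_{l_j}P_{j,j+1}$. The only cosmetic difference is that the paper computes $(y_ks_k)\diamond\phi$ and $(s_ky_{k+1})\diamond\phi$ separately and then compares, while you expand $s_j\diamond y_j\diamond\phi$ directly; you also add an explicit (routine) remark on functoriality, which the paper omits.
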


\begin{proof}
Let us first show that $\pi\diamond\phi$ is an
$\mathbb{F}G^n$-homomorphism (and skip a similar proof that
$y_k\diamond\phi$ is $\mathbb{F}G^n$-homomorphism). Indeed, for
$g=(g_1,\ldots,g_n)\in G^n$,
$$
\aligned(\pi\diamond\phi) (g(v_1\otimes\cdots\otimes v_n))
 &=\pi\phi(g_{\pi(1)}v_{\pi(1)}\otimes\cdots\otimes
g_{\pi(n)}v_{\pi(n)})\\
&=\pi((g_{\pi(1)},\ldots,g_{\pi(n)})\phi(v_{\pi(1)}
\otimes\cdots\otimes v_{\pi(n)}))\\
&=(g_1,\ldots,g_n)\pi\phi(v_{\pi(1)}\otimes\cdots\otimes
v_{\pi(n)}))\\&=g((\pi\diamond\phi)(v_1\otimes\cdots\otimes v_n)).
\endaligned
$$

Take $1\le k \le r$ such that $k\neq n_1,
n_1+n_2,\ldots,n_1+n_2+\cdots +n_{r-1}$. By Corollary
\ref{weig-t-1},
$$
t_{k,k+1}(v_1\otimes\cdots\otimes
v_n)=c_{l_k}(v_1\otimes\cdots\otimes v_{k-1}\otimes v_{k+1}\otimes
v_{k}\otimes\cdots\otimes v_n).
$$
By definition,
\begin{align*}
((y_ks_k)\diamond \phi)(v_1\otimes\cdots\otimes v_n) &= \left \{
\begin{array}{ll}
\frac{x_k}{c_{l_k}}s_k\phi(v_1\otimes\cdots\otimes v_{k+1}\otimes
v_k\otimes\cdots v_n), & \text{ if }c_{l_k}\neq 0
 \\
 x_ks_k\phi(v_1\otimes\cdots\otimes v_{k+1}\otimes
v_k\otimes\cdots v_n), & \text{ if }c_{l_k}= 0,
\end{array} \right.
 \\
(s_ky_{k+1})\diamond \phi)(v_1\otimes\cdots\otimes v_n) &=\left \{
\begin{array}{ll}
s_k\frac{x_{k+1}}{c_{l_{k+1}}}\phi(v_1\otimes\cdots\otimes
v_{k+1}\otimes v_k\otimes\cdots v_n), &\text{ if }c_{l_k}\neq 0
 \\
s_kx_{k+1}\phi(v_1\otimes\cdots\otimes v_{k+1}\otimes
v_k\otimes\cdots v_n), &\text{ if }c_{l_k}=0.
\end{array} \right.
\end{align*}
The above computations together with \eqref{symmetric and polnomial
2} now imply that $y_ks_k=s_ky_{k+1}-1$ if $c_{l_k}\neq 0$ and
$y_ks_k=s_ky_{k+1}$ if $c_{l_k}=0$.

The other relations for the $A_{\widehat{n},r}$-module structure on
$\text{Hom}_{\mathbb{F}G^n}(V(\n), N)$ are clear.
\end{proof}

\begin{prop}\label{group tensor affine 3}
Let $N$ be an $A_{\widehat{n},r}$-module. Then
$V(\widehat{n})\otimes N$ is an
$\mathcal{H}_{\widehat{n}}(G)$-module via
\begin{align}
g\ast(v_1\otimes v_2\cdots\otimes v_n \otimes z)
 &=g(v_1\otimes v_2\cdots\otimes v_n)\otimes z\notag,
 \\
\pi\ast(v_1\otimes v_2\cdots\otimes v_n\otimes z)
&=v_{\pi^{-1}{1}}\otimes v_{\pi^{-1}2}\otimes\cdots\otimes
v_{\pi^{-1}r}\otimes \pi z \notag,
\\
x_k\ast(v_1\otimes v_2 \cdots \otimes v_n\otimes z) &=\left \{
\begin{array}{ll}
c_{l_k}v_1\otimes v_2\cdots\otimes v_n\otimes y_k z, & \text{ if }
c_{l_k}\neq 0 \notag
\\
v_1\otimes v_2\cdots\otimes v_n\otimes y_k z, & \text{ if } c_{l_k}=
0 \notag,
\end{array} \right.
\end{align}
for $g\in G^n, \pi\in S_{\widehat{n}}, 1\leq k\leq n,
v_1\otimes\cdots\otimes v_n\in V(\widehat{n})$, and $z\in N$.

There exists an isomorphism of $A_{\widehat{n},r}$-modules $\Phi:
N\rightarrow\text{Hom}_{\mathbb{F}G^n}(V(\widehat{n}),
V(\widehat{n})\otimes N)$ given by $\Phi(z) (v) = v\otimes z$.
Moreover, $V(\widehat{n})\otimes N$ is a simple
$\mathcal{H}_{\widehat{n}}(G)$-module if and only if $N$ is a simple
$A_{\widehat{n},r}$-module.
\end{prop}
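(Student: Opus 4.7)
The plan is to verify the proposition in three stages. The first stage is to check that the formulas define a module structure. Among the defining relations of $\mathcal{H}_{\widehat{n}}(G)$, the commutations $x_k x_l = x_l x_k$, $g \ast x_k = x_k \ast g$, and $\pi g = {}^{\pi}g\, \pi$ for $\pi \in S_{\widehat{n}}$ are immediate from the formulas (using that on $V(\widehat{n})$ the factors in the $k$th block are indistinguishable simple $G$-modules of isomorphism type $V_{l_k}$). The only delicate relation is \eqref{symmetric and polnomial 2}: for $k$ with $l_k = l_{k+1}$ (i.e.\ the index $k$ is interior to a block of $\widehat{n}$), I would apply Corollary~\ref{weig-t-1} to evaluate $t_{k,k+1}$ on $V(\widehat{n})$ as $c_{l_k} P_{k,k+1}$, and then rewrite \eqref{symmetric and polnomial 2} in the two cases $c_{l_k} \ne 0$ and $c_{l_k} = 0$. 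In the former case the rescaling $x_k = c_{l_k} y_k$ converts \eqref{symmetric and polnomial 2} into $y_k s_k = s_k y_{k+1} - 1$, which is the defining relation of $\mathcal{H}_{n_{l_k}}$; in the latter case it converts to $y_k s_k = s_k y_{k+1}$, which is the defining relation of ${}^0\mathcal{H}_{n_{l_k}}$. Hence the action descends from $A_{\widehat{n},r}$ exactly as required.

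The second stage is to analyze $\Phi$. Linearity is tautological, and injectivity follows because $\Phi(z) = 0$ forces $v \otimes z = 0$ for all $v$, hence $z = 0$. For surjectivity I would invoke Schur's lemma: since $V(\widehat{n})$ is an absolutely simple $\mathbb{F}G^n$-module, $\mathrm{End}_{\mathbb{F}G^n}(V(\widehat{n})) = \mathbb{F}$, so the natural map
\[
N \longrightarrow \mathrm{Hom}_{\mathbb{F}G^n}\bigl(V(\widehat{n}),\, V(\widehat{n}) \otimes N\bigr), \qquad z \longmapsto (v \mapsto v \otimes z),
\]
is an isomorphism of $\mathbb{F}$-vector spaces. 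Compatibility with the $A_{\widehat{n},r}$-actions is a direct check: for $\pi \in S_{\widehat{n}}$ one gets $\Phi(\pi \diamond z)(v_1 \otimes \cdots \otimes v_n) = v_1 \otimes \cdots \otimes v_n \otimes \pi z = \pi \ast (v_{\pi(1)} \otimes \cdots \otimes v_{\pi(n)} \otimes z) = (\pi \diamond \Phi(z))(v_1 \otimes \cdots \otimes v_n)$ by unwinding the definition of the $\diamond$-action in Proposition~\ref{Hom functor}; the $y_k$-compatibility is analogous, with the scalar $c_{l_k}$ appearing on both sides and cancelling.

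The third stage transfers simplicity through $\Phi$. Since $V(\widehat{n}) \otimes N$ is $V(\widehat{n})$-isotypic as an $\mathbb{F}G^n$-module, every $\mathbb{F}G^n$-submodule is of the form $V(\widehat{n}) \otimes N'$ for a unique subspace $N' \subseteq N$, by the standard isotypic decomposition argument. The formulas defining the $\mathcal{H}_{\widehat{n}}(G)$-action show that $V(\widehat{n}) \otimes N'$ is $\mathcal{H}_{\widehat{n}}(G)$-stable if and only if $N'$ is stable under all $y_k$ and all $\pi \in S_{\widehat{n}}$, i.e.\ under $A_{\widehat{n},r}$. This yields an inclusion-preserving bijection between submodules, and simplicity of the two modules becomes equivalent.

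The main obstacle I anticipate is the bookkeeping in the first stage, namely keeping track of the two possible rescalings ($c_{l_k} \ne 0$ versus $c_{l_k} = 0$) so that \eqref{symmetric and polnomial 2} matches the corresponding relation in ${}^{c_{l_k}}\mathcal{H}_{n_{l_k}}$; once that is cleanly done, the rest is formal.
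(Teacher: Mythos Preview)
Your proposal is correct and follows essentially the same approach as the paper. The paper dismisses your first stage as ``straightforward to verify'' and handles $\Phi$ by the same injectivity-plus-dimension argument; for the simplicity equivalence the paper argues the two directions separately (one via $V(\widehat{n})\otimes E$, the other via Proposition~\ref{Hom functor} applied to a nonzero submodule $M$), which is just a rephrasing of your submodule-bijection argument through the isotypic decomposition.
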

\begin{proof}
It is straightforward to verify that $V(\widehat{n})\otimes N$ is an
$\mathcal{H}_{\widehat{n}}(G)$-module as given above.

Clearly $\Phi$ is a well-defined injective
$A_{\widehat{n},r}$-homomorphism. On the other hand, observe that as
an $\mathbb{F}G^n$-module, $V(\widehat{n}) \otimes N$ is isomorphic
to a direct sum of copies of $V(\widehat{n})$. Thus $\Phi$ is an
isomorphism by a dimension comparison.

Suppose that $V(\widehat{n})\otimes N$ is a simple
$\mathcal{H}_{\widehat{n}}(G)$-module and $E$ is a
$A_{\widehat{n},r}$-submodule of $N$. Then $V(\widehat{n})\otimes E$
is a $\mathcal{H}_{\widehat{n}}(G)$-submodule of
$V(\widehat{n})\otimes N$, which implies $E=N$. Conversely, suppose
that $N$ is a simple $A_{\widehat{n},r}$-module and $M$ is a nonzero
$\mathcal{H}_{\widehat{n}}(G)$-submodule of $V(\widehat{n})\otimes
N$. Then by Proposition \ref{Hom functor},
$\text{Hom}_{\mathbb{F}G^n}(V(\widehat{n}),M)$ is a nonzero
$A_{\widehat{n},r}$-submodule of
$\text{Hom}_{\mathbb{F}G^n}(V(\widehat{n}), V(\widehat{n})\otimes
N)\cong N$, which is simple. Hence
$\text{Hom}_{\mathbb{F}G^n}(V(\widehat{n}),M)\cong N$. Since $M$ as
an $\mathbb{F}G^n$-module is isomorphic to a direct sum of copies of
$V(\widehat{n})$, $M=V(\widehat{n})\otimes N$ by a dimensional
counting argument.
\end{proof}

\subsection{An equivalence of categories}

\begin{prop}\label{group tensor affine 4}
Let $M \in \HngMods$. Then
\begin{align*}
\Psi:V(\widehat{n})\otimes
\text{Hom}_{\mathbb{F}G^n}(V(\widehat{n}), I_{\widehat{n}}M) &
\longrightarrow
I_{\widehat{n}}M,\\
 v_1\otimes \cdots\otimes v_n\otimes\psi
& \mapsto \psi(v_1\otimes \cdots\otimes v_n)
\end{align*}
defines an isomorphism of $\mathcal{H}_{\widehat{n}}(G)$-modules.
\end{prop}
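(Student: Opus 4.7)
The plan is to verify the proposition in three stages: first, confirm that both sides genuinely carry $\mathcal{H}_{\widehat{n}}(G)$-module structures; second, check that $\Psi$ intertwines the actions of each generator of $\mathcal{H}_{\widehat{n}}(G)$; third, establish bijectivity via Schur's lemma and a dimension count on the $\mathbb{F}G^n$-isotypical component.

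For the setup, the right-hand side $I_{\widehat{n}}M$ is an $\mathcal{H}_{\widehat{n}}(G)$-module by Lemma~\ref{ealphaM}. For the left-hand side, Proposition~\ref{Hom functor} equips $N := \mathrm{Hom}_{\mathbb{F}G^n}(V(\widehat{n}), I_{\widehat{n}}M)$ with an $A_{\widehat{n},r}$-module structure, and then Proposition~\ref{group tensor affine 3} (applied to this $N$) gives the desired $\mathcal{H}_{\widehat{n}}(G)$-action on $V(\widehat{n})\otimes N$.

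Next, I would verify the homomorphism property on the generators $g\in G^n$, $\pi\in S_{\widehat{n}}$ and $x_k$. For $g\in G^n$, since every $\psi\in N$ is $\mathbb{F}G^n$-equivariant, $\Psi(g\ast(v\otimes\psi)) = \psi(gv) = g\psi(v)$. For $\pi\in S_{\widehat{n}}$, one substitutes the definitions: the tensor-side action sends $v_1\otimes\cdots\otimes v_n\otimes\psi$ to $v_{\pi^{-1}1}\otimes\cdots\otimes v_{\pi^{-1}n}\otimes(\pi\diamond\psi)$, and applying $\Psi$ produces $(\pi\diamond\psi)(v_{\pi^{-1}1}\otimes\cdots\otimes v_{\pi^{-1}n}) = \pi\psi(v_1\otimes\cdots\otimes v_n) = \pi\Psi(v_1\otimes\cdots\otimes v_n\otimes\psi)$, using the defining formula for $\pi\diamond\psi$ from Proposition~\ref{Hom functor}. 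For $x_k$, the crucial point is the cancellation of the scalar $c_{l_k}$: when $c_{l_k}\neq 0$, one side contributes a factor $c_{l_k}$ from the $\ast$-action and the other contributes $1/c_{l_k}$ from the $\diamond$-action, and both reduce to $x_k\psi(v_1\otimes\cdots\otimes v_n)$; when $c_{l_k}=0$, both actions are defined without the scalar and match directly. This calibration is precisely why the definitions in Propositions~\ref{Hom functor} and \ref{group tensor affine 3} were chosen with the $c_{l_k}$ factors.

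Finally, for bijectivity: because $V(\widehat{n})$ is a simple $\mathbb{F}G^n$-module and $I_{\widehat{n}}M$ is by definition its isotypical component, we have $I_{\widehat{n}}M \cong V(\widehat{n})^{\oplus m}$ as $\mathbb{F}G^n$-modules, where $m = \dim N$. Surjectivity of $\Psi$ is immediate since each $\mathbb{F}G^n$-simple submodule of $I_{\widehat{n}}M$ is the image of some $\psi\in N$. For injectivity, Schur's lemma gives $\mathrm{End}_{\mathbb{F}G^n}(V(\widehat{n}))\cong\mathbb{F}$, whence $\dim(V(\widehat{n})\otimes N) = \dim V(\widehat{n})\cdot m = \dim I_{\widehat{n}}M$, so a surjection of equal-dimensional spaces is an isomorphism.

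The main potential obstacle is the $x_k$-equivariance, since it is the only generator whose action is twisted by the scalars $c_{l_k}$; however this is precisely the point at which the normalizations in the preceding two propositions were designed to match, so the verification is a direct substitution rather than a subtle argument.
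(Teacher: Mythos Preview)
Your proposal is correct and follows essentially the same approach as the paper's proof: both invoke Lemma~\ref{ealphaM} and Propositions~\ref{Hom functor}, \ref{group tensor affine 3} to equip the two sides with $\mathcal{H}_{\widehat{n}}(G)$-module structures, then check $\Psi$ is a homomorphism and conclude bijectivity by surjectivity plus a dimension count. The only difference is that the paper dismisses the homomorphism verification with ``it can be easily checked,'' whereas you actually carry out the generator-by-generator computation (including the $c_{l_k}$ cancellation for $x_k$), which is a welcome elaboration rather than a different method.
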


\begin{proof}
By Lemma~\ref{ealphaM}, $I_{\widehat{n}}M$ is an
$\mathcal{H}_{\widehat{n}}(G)$-module. It follows from
Propositions~\ref{Hom functor} and \ref{group tensor affine 3} that
$V(\widehat{n})\otimes \text{Hom}_{\mathbb{F}G^n} (V(\widehat{n}),
I_{\widehat{n}}M)$ is an $\mathcal{H}_{\widehat{n}}(G)$-module.

It can be easily checked that $\Psi$ is an
$\mathcal{H}_{\widehat{n}}(G)$-homomorphism. Since as an
$\mathbb{F}G^n$-module $I_{\widehat{n}}M$ is isomorphic to a direct
sum of copies of $V(\widehat{n})$, $\Psi$ is surjective and hence an
isomorphism by a dimension counting argument.
\end{proof}

We are now ready to prove the first main result of this paper.

\begin{thm}\label{morita equivalence} The functor
$\mathcal{F}:\mathcal{H}_n(G)$-\text{\bf mod}$^s\rightarrow
A_{n,r}$-\text{\bf mod} defined by
$$
\mathcal{F}(M) =\bigoplus_{\widehat{n}\in\mathcal{C}_n^r}
\text{Hom}_{\mathbb{F}G^n}(V(\widehat{n}),I_{\widehat{n}}M)
$$
is a category equivalence, with inverse $\mathcal{G}:
A_{n,r}$-\text{\bf mod}$\rightarrow \mathcal{H}_n(G)$-\text{\bf
mod}$^s$ given by
$$
\mathcal{G}(\oplus_{\widehat{n}\in\mathcal{C}_n^r} U_{\widehat{n}})
=\bigoplus_{\widehat{n}\in\mathcal{C}_n^r}
\text{ind}_{\mathcal{H}_{\widehat{n}}(G)}^{\mathcal{H}_n(G)}
(V(\widehat{n})\otimes U_{\widehat{n}}).
$$
\end{thm}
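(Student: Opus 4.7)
The plan is to verify directly that $\mathcal{F}$ and $\mathcal{G}$ are mutually quasi-inverse, by assembling the building blocks already in place: Lemma~\ref{Malpha} decomposes any $M\in\HngMods$ into the pieces $M_{\widehat{n}}$; Lemma~\ref{ealphaM} identifies each $M_{\widehat{n}}$ with an induction from $\mathcal{H}_{\widehat{n}}(G)$; and Propositions~\ref{group tensor affine 3} and \ref{group tensor affine 4} describe how the $V(\widehat{n})\otimes(-)$ and $\mathrm{Hom}_{\F G^n}(V(\widehat{n}),-)$ constructions invert one another. A preliminary check is that $\mathcal{G}$ lands in $\HngMods$: each $V(\widehat{n})\otimes U_{\widehat{n}}$ is $\F G^n$-semisimple as a direct sum of copies of the simple module $V(\widehat{n})$, and the coset decomposition of $\mathcal{H}_n(G)$ over $\mathcal{H}_{\widehat{n}}(G)$ (from Theorem~\ref{PBW basis}) transports this semisimplicity to the induced module. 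Functoriality on morphisms is routine.

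For $\mathcal{G}\circ\mathcal{F}\simeq\mathrm{id}$, fix $M\in\HngMods$ and unwind
$$\mathcal{G}\mathcal{F}(M) = \bigoplus_{\widehat{n}\in\mathcal{C}_n^r} \mathrm{ind}^{\mathcal{H}_n(G)}_{\mathcal{H}_{\widehat{n}}(G)}\Bigl(V(\widehat{n})\otimes \mathrm{Hom}_{\F G^n}(V(\widehat{n}), I_{\widehat{n}}M)\Bigr).$$
Proposition~\ref{group tensor affine 4} rewrites the inner tensor product as $I_{\widehat{n}}M$ as $\mathcal{H}_{\widehat{n}}(G)$-module; Lemma~\ref{ealphaM} identifies each induced term with $M_{\widehat{n}}$; Lemma~\ref{Malpha} reassembles $\bigoplus_{\widehat{n}} M_{\widehat{n}} = M$. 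All three steps are natural in $M$.

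For $\mathcal{F}\circ\mathcal{G}\simeq\mathrm{id}$, the heart of the argument is to compute the $V(\widehat{m})$-isotypic component $I_{\widehat{m}}$ of $\mathrm{ind}^{\mathcal{H}_n(G)}_{\mathcal{H}_{\widehat{n}}(G)}(V(\widehat{n})\otimes U_{\widehat{n}})$ as an $\F G^n$-module. The coset decomposition writes this induced module as $\bigoplus_{\tau\in\Theta(\widehat{n})} \tau\otimes V(\widehat{n})\otimes U_{\widehat{n}}$, and $g\in G^n$ acts on $\tau\otimes v\otimes u$ by $\tau\otimes(\tau^{-1}g\tau)v\otimes u$; using $\tau^{-1}(g_1,\ldots,g_n)\tau=(g_{\tau(1)},\ldots,g_{\tau(n)})$, each summand becomes, as an $\F G^n$-module, a direct sum of $\dim U_{\widehat{n}}$ copies of the permuted simple module $V_{l_{\tau^{-1}(1)}}\otimes\cdots\otimes V_{l_{\tau^{-1}(n)}}$. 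Since simple $\F G^n$-modules are classified by the ordered tuple of tensor factors, this equals $V(\widehat{n})$ precisely when $\tau\in S_{\widehat{n}}$, forcing $\tau=1$ among coset representatives, and it never equals $V(\widehat{m})$ for a distinct composition $\widehat{m}$. Hence $I_{\widehat{m}}(\mathcal{G}(U))$ reduces to $1\otimes V(\widehat{m})\otimes U_{\widehat{m}}$ when $\widehat{m}=\widehat{n}$ and vanishes otherwise; Proposition~\ref{group tensor affine 3} then gives $\mathrm{Hom}_{\F G^n}(V(\widehat{m}), V(\widehat{m})\otimes U_{\widehat{m}})\cong U_{\widehat{m}}$. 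The principal obstacle is precisely this vanishing computation, which rests on the classification of simple $\F G^n$-modules by ordered index tuples together with careful tracking of how $G^n$ acts through conjugation by coset representatives; once this is in hand, naturality in $U$ follows formally from that of Proposition~\ref{group tensor affine 3}.
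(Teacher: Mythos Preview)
Your proof is correct and follows essentially the same route as the paper: both assemble Lemma~\ref{ealphaM}, Lemma~\ref{Malpha}, and Propositions~\ref{Hom functor}, \ref{group tensor affine 3}, \ref{group tensor affine 4} into natural isomorphisms $\mathcal{G}\mathcal{F}\cong\mathrm{id}$ and $\mathcal{F}\mathcal{G}\cong\mathrm{id}$. The paper's proof is a two-line sketch (``one easily checks''), whereas you have spelled out the coset analysis showing $I_{\widehat{m}}\bigl(\mathrm{ind}^{\mathcal{H}_n(G)}_{\mathcal{H}_{\widehat{n}}(G)}(V(\widehat{n})\otimes U_{\widehat{n}})\bigr)$ vanishes for $\widehat{m}\neq\widehat{n}$ and equals $V(\widehat{n})\otimes U_{\widehat{n}}$ otherwise---this is exactly the content the paper suppresses.
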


\begin{proof}
Note that the map $\Phi$ in Proposition \ref{group tensor affine 3}
is natural in $N$ and $\Psi$ in Proposition \ref{group tensor affine
4} is natural in $M$. Now using Lemma \ref{ealphaM},
Propositions~\ref{Hom functor}, \ref{group tensor affine 3} and
\ref{group tensor affine 4}, one easily checks that $\mathcal{F}
\mathcal{G} \cong \text{id}$ and $\mathcal{G}\mathcal{F} \cong
\text{id}.$
\end{proof}
\begin{rem}
Assume that $p$ does not divide the order of $G$. Then, every
finite dimensional $\mathcal{H}_n(G)$-module $M$ is semisimple
when restricted to $\mathbb{F}G^n$, and hence $\HngMods$ coincides
with $\Hng$-{\bf mod}. Moreover $c_k =|G|/d_k \in \mathbb I
-\{0\}$ for each $1\leq k\leq r$, and hence
$A_{\widehat{n},r}\cong \mathcal{H}_{\widehat{n}}$. Now
Theorem~\ref{morita equivalence} states that the wreath Hecke
algebra $\mathcal{H}_n(G)$ is Morita equivalent to the algebra
$\bigoplus_{\widehat{n}\in\mathcal{C}_n^r}
\mathcal{H}_{\widehat{n}}$.
\end{rem}

\section{Classification of simple modules and modular branching rules}
\label{branching rule}

We present two applications of the above category equivalence in
this section. We shall classify all finite dimensional simple
$\mathcal{H}_n(G)$-modules, and establish the modular branching
rule for the wreath Hecke algebra $\mathcal{H}_n(G)$ which
provides a description of the socle of the restriction to
$\mathcal{H}_{n-1,1}(G)$ of a simple $\Hng$-module.
\subsection{The simple $\Hno$-modules}

Denote by ${}^0\mathcal{H}_{\lambda}$ the subalgebra of $\Hno$ for
any composition $\lambda$ of $n$, in the same way as
$\mathcal{H}_{\widehat{n}} \subset \Hn$. It is well known that the
simple $\F S_n$-modules, denoted by $D^\mu$, are parameterized by
$p$-regular partitions $\mu$ of $n$ (cf. \cite[Corollary~
6.1.12]{JK}).
By letting each $x_i$ acting as a scalar $a\in \F$, we can extend an
$S_n$-module $W$ to an $\Hno$-module, which shall be denoted by
$W_a$. The classification of simple modules of the algebra $\Hno
=P_n \rtimes \F S_n$ over $\F$ is easily obtained by Clifford theory
as follows.

\begin{prop} \label{prop:Hn0}
Each simple $\Hno$-module is isomorphic to a module $
D_{\underline{a}, {\mu}}$ of the form
\begin{eqnarray} \label{eq:moduleD}
D_{\underline{a}, {\mu}} =
\text{ind}_{{}^0\mathcal{H}_{\widehat{\mu}}}^{\Hno} (
D^{\mu^1}_{a_1} \otimes \cdots\otimes D^{\mu^t}_{a_t})
\end{eqnarray}
where $a_1, \ldots, a_t$ are distinct scalars in $\F$, and $\mu^1,
\ldots, \mu^t$ are $p$-regular partitions such that $\widehat{\mu}
=(|\mu^1|,\ldots, |\mu^t|)$ is a composition of $n$ for some $t>0$.
Moreover, the above modules for varied $(a_i, \mu^i)$ form a
complete set of pairwise non-isomorphic simple $\Hno$-modules.
\end{prop}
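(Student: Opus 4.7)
The plan is to apply Clifford theory to the semidirect product structure $\Hno = P_n \rtimes \F S_n$. Since $\Hno$ is module-finite over its center (which contains $\Lambda_n$, by the same argument as in the remark after Theorem~\ref{center}), every simple $\Hno$-module is finite-dimensional, so weight-space arguments are available. Given a simple $\Hno$-module $M$, my first step is to decompose $M = \bigoplus_{\underline{b}\in \F^n} M_{\underline{b}}$ into generalized $P_n$-weight spaces, which is nontrivial because $P_n$ is commutative and $\F$ is algebraically closed. The relation $s_i x_j = x_{s_i(j)} s_i$ (the $u=0$ specialization) yields $w \cdot M_{\underline{b}} = M_{w\underline{b}}$, so by simplicity the support of $M$ is a single $S_n$-orbit. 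I would then pick a representative of the form $\underline{b} = (a_1,\ldots,a_1,\ldots,a_t,\ldots,a_t)$ with $a_j$ appearing $n_j$ times and the $a_j$ pairwise distinct; the stabilizer of $\underline{b}$ in $S_n$ is the Young subgroup $S_{\widehat{\mu}}$ with $\widehat{\mu} = (n_1,\ldots,n_t)$.

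Second, a standard Mackey-type argument for semidirect products should show that $M_{\underline{b}}$ is a simple ${}^0\mathcal{H}_{\widehat{\mu}}$-module and $M \cong \text{ind}_{{}^0\mathcal{H}_{\widehat{\mu}}}^{\Hno} M_{\underline{b}}$. I would then promote the generalized weight space to an honest eigenspace: set $M'_{\underline{b}} := \bigcap_{i=1}^n \ker(x_i - b_i)$, which is nonzero (commuting operators have a common eigenvector over $\F$) and is $S_{\widehat{\mu}}$-invariant precisely because $S_{\widehat{\mu}}$ fixes $\underline{b}$; hence it is a nonzero ${}^0\mathcal{H}_{\widehat{\mu}}$-submodule of $M_{\underline{b}}$, forcing $M'_{\underline{b}} = M_{\underline{b}}$. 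Thus each $x_i$ acts on $M_{\underline{b}}$ as the scalar $b_i$, and $M_{\underline{b}}$ is simply a simple $\F S_{\widehat{\mu}} \cong \F S_{n_1} \otimes \cdots \otimes \F S_{n_t}$-module on which $P_n$ acts through the character $\underline{b}$. The classification of simple $\F S_m$-modules by $p$-regular partitions then gives $M_{\underline{b}} \cong D^{\mu^1} \boxtimes \cdots \boxtimes D^{\mu^t}$, producing an isomorphism $M \cong D_{\underline{a},\mu}$ of the form \eqref{eq:moduleD}.

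Finally, I would verify the converse direction: each $D_{\underline{a},\mu}$ is simple by a weight-space argument (any nonzero $\Hno$-submodule must intersect the $\underline{b}$-weight space, and by simplicity of $D^{\mu^1} \boxtimes \cdots \boxtimes D^{\mu^t}$ it then contains the whole weight space, hence everything), and distinct data $(\underline{a},\mu)$ modulo simultaneous reordering of the $a_j$ and $\mu^j$ give non-isomorphic modules, since the multiset of generalized $P_n$-weights recovers $\underline{a}$ and $\widehat{\mu}$, and the $\F S_{\widehat{\mu}}$-module structure on the weight space recovers the $\mu^j$. The main point requiring care, though ultimately standard, is the Mackey/Clifford step identifying $M$ with a module induced from the stabilizer of a weight; the hypothesis that $\F$ be algebraically closed and the use of \emph{$p$-regular} partitions (rather than arbitrary partitions) to parametrize simple $\F S_m$-modules are the only places where the characteristic of $\F$ enters.
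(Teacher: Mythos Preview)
Your proposal is correct and follows precisely the Clifford-theory route the paper indicates: the paper itself gives no detailed proof, merely stating that the classification ``is easily obtained by Clifford theory'' for the semidirect product $\Hno = P_n \rtimes \F S_n$. Your write-up supplies exactly those details (weight-space decomposition, single $S_n$-orbit of weights, stabilizer $S_{\widehat\mu}$, simplicity of the weight space, reduction to honest eigenspaces, and identification with simple $\F S_{\widehat\mu}$-modules), and each step is sound.
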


\subsection{The simple $\Hng$-modules}

\begin{prop} \label{prop:cliff}
Suppose that $M$ is a simple $\mathcal{H}_n(G)$-module. Then, as an
$\F G^n$-module $M$ is semisimple.
\end{prop}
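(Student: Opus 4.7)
The plan is to show that the $\mathbb{F}G^n$-socle of $M$ is in fact an $\mathcal{H}_n(G)$-submodule; then simplicity of $M$ forces $M$ to equal its socle, which is exactly semisimplicity over $\mathbb{F}G^n$.

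First, note that by the remark following Theorem~\ref{center} any simple $\mathcal{H}_n(G)$-module is finite dimensional, so $M$ is a nonzero finite dimensional $\mathbb{F}G^n$-module and therefore has a nonzero socle $S := \operatorname{soc}_{\mathbb{F}G^n}(M)$, the sum of all simple $\mathbb{F}G^n$-submodules of $M$.

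Next, I would verify that $S$ is stable under each of the generators $g \in G^n$, $x_i$, and $s_j$ of $\mathcal{H}_n(G)$. Stability under $G^n$ is trivial. For $x_i$: by relation \eqref{comm:xg} the element $x_i$ commutes with $\mathbb{F}G^n$, so for any simple $\mathbb{F}G^n$-submodule $N \subset M$ the map $N \to M$, $v \mapsto x_i v$, is $\mathbb{F}G^n$-linear; its image is therefore either $0$ or isomorphic to $N$, hence simple, so $x_i N \subseteq S$. For $s_j$: relation \eqref{SnGn} gives $s_j g = {}^{s_j}g\, s_j$, so for any simple $\mathbb{F}G^n$-submodule $N$ the subspace $s_j N$ carries the $\mathbb{F}G^n$-action twisted by the automorphism $g \mapsto {}^{s_j}g$; this twist of a simple module is simple, hence $s_j N \subseteq S$. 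Summing over all simple $\mathbb{F}G^n$-submodules $N$ shows $x_i S \subseteq S$ and $s_j S \subseteq S$.

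Therefore $S$ is an $\mathcal{H}_n(G)$-submodule of $M$. Since $S \neq 0$ and $M$ is simple, $S = M$, which is the desired semisimplicity. There is no serious obstacle here: the only mild subtlety is making explicit that the twisted-by-automorphism restriction of a simple module remains simple (which is immediate since the inverse automorphism undoes the twist), and recalling that simplicity over $\mathcal{H}_n(G)$ forces finite dimensionality so that the socle is nonzero.
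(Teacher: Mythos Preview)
Your argument is correct. Both proofs exhibit an $\mathbb{F}G^n$-semisimple subspace that is $\mathcal{H}_n(G)$-stable and then invoke simplicity, but they build that subspace differently. The paper first chooses a simple $P_n(G)$-submodule $V$ (so each $x_j$ already acts by a scalar on $V$), forms $M_1=\sum_{\pi\in S_n}\pi V$, and checks that $M_1$ is $\mathcal{H}_n(G)$-stable; the $x_i$-stability step implicitly uses the commutation formula of Lemma~\ref{spanned set} to push $x_i$ past $\pi$ and land back in $\sum_\tau \tau V$. You instead take the entire $\mathbb{F}G^n$-socle at once and verify stability generator by generator: this avoids passing through $P_n(G)$ and makes $x_i$-stability immediate from \eqref{comm:xg} alone, with no need for the PBW-type commutation lemma. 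Your route is slightly more economical; the paper's route, by contrast, yields the concrete description $M=\sum_{\pi\in S_n}\pi V$ for a single simple $\mathbb{F}G^n$-module $V$, which is a bit more information than bare semisimplicity.
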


\begin{proof}
Take a simple $P_n(G)$-submodule $V (\underline{a})$ of $M$ which,
thanks to $P_n(G)\cong \mathbb{F}G^n\otimes P_n$, restricts to a
simple $\F G^n$-submodule $V\cong V_{i_1}\otimes\cdots\otimes
V_{i_n}$ with each $x_j$ acting as $a_j$ for $\underline{a} =(a_1,
\ldots, a_n) \in \mathbb F^n$.
It follows that $M_1 := \sum_{\pi \in S_n} \pi V$ is an
$\Hng$-submodule of $M$, and hence $M_1 =M$ since $M$ is simple.
Since each $\pi V$ is a simple $\F G^n$-module, $M$ as an $\F
G^n$-module is semisimple.
\end{proof}

\begin{cor} \label{same simple}
The categories $\HngMods$ and $\Hng$-{\bf mod} share the same class
of simple modules.
\end{cor}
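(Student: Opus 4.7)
The claim is essentially an immediate consequence of Proposition~\ref{prop:cliff}, so the plan is simply to unpack the definitions and check the two containments of the classes of simple modules.

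First I would note that $\HngMods$ is by construction a full subcategory of $\Hng$-\textbf{mod}, so a module $M\in\HngMods$ is simple in $\HngMods$ if and only if it is simple in $\Hng$-\textbf{mod}. Hence the problem reduces to showing that the underlying sets of simple objects agree, i.e.\ that a finite dimensional $\Hng$-module is simple as an $\Hng$-module if and only if it is simple as an object of $\HngMods$.

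For one direction, any simple object of $\HngMods$ is a finite dimensional $\Hng$-module which is simple in the larger category $\Hng$-\textbf{mod} (since any $\Hng$-submodule of $M$ that is semisimple over $\F G^n$ lies in $\HngMods$, and in fact every $\Hng$-submodule of a semisimple $\F G^n$-module is again semisimple over $\F G^n$). For the converse, let $M$ be a finite dimensional simple $\Hng$-module; by Proposition~\ref{prop:cliff}, $M$ is semisimple on restriction to $\F G^n$, so $M\in\HngMods$, and since $\HngMods$ is a full subcategory, $M$ remains simple there.

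There is no real obstacle here — the content has already been absorbed into Proposition~\ref{prop:cliff}; what remains is a routine category-theoretic verification. The only subtlety worth flagging is the observation that $\HngMods$ is closed under taking $\Hng$-submodules (so that simplicity is preserved in both directions between the two categories), which follows because a submodule of a semisimple $\F G^n$-module is semisimple.
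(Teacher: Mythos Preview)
Your proposal is correct and follows exactly the intended approach: the paper states this corollary without proof, treating it as immediate from Proposition~\ref{prop:cliff}. Your elaboration (that $\HngMods$ is a full subcategory closed under submodules, so simplicity transfers in both directions once Proposition~\ref{prop:cliff} guarantees every simple $\Hng$-module lies in $\HngMods$) is precisely the routine verification the paper leaves implicit.
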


\begin{thm} \label{IrrWHA2}
Each simple $\mathcal{H}_n(G)$-module is isomorphic to a module of
the form
\begin{eqnarray} \label{Hng simple}
D_{\widehat{n}} (L_\centerdot)
:= \text{ind}_{\mathcal{H}_{\widehat{n}}(G)}^{\mathcal{H}_n(G)}
(V_1^{\otimes n_1}\otimes L_1) \otimes \cdots\otimes (V_r^{\otimes
n_r}\otimes L_r)
\end{eqnarray}
where $\widehat{n}=(n_1, \ldots,n_r) \in\mathcal{C}_n^r$ and $L_k$
$(1\leq k\leq r)$ is a simple ${}^{c_k}\mathcal{H}_{n_k}$-module.
Moreover, the above modules for varied $\widehat{n}$ and $L_k$
($1\leq k\leq r$) form a complete set of pairwise non-isomorphic
simple $\mathcal{H}_n(G)$-modules.
\end{thm}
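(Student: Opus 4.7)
The plan is to chain together the three reductions that have been carefully prepared in the preceding sections. By Corollary \ref{same simple}, every simple $\mathcal{H}_n(G)$-module lies in $\HngMods$, so it suffices to classify the simple objects of $\HngMods$. By the category equivalence $\mathcal{F}:\HngMods \to A_{n,r}\text{-}\mathbf{mod}$ of Theorem \ref{morita equivalence}, these are in bijection (via $\mathcal{G}$) with the simple $A_{n,r}$-modules. Since $A_{n,r} = \bigoplus_{\widehat{n}\in\mathcal{C}_n^r} A_{\widehat{n},r}$ is a direct sum, every simple $A_{n,r}$-module is a simple $A_{\widehat{n},r}$-module for a unique $\widehat{n}$; and since $A_{\widehat{n},r} = {}^{c_1}\mathcal{H}_{n_1}\otimes \cdots \otimes {}^{c_r}\mathcal{H}_{n_r}$ is an outer tensor product of finite-dimensional algebras over an algebraically closed field, every simple $A_{\widehat{n},r}$-module has the form $L_1\otimes\cdots\otimes L_r$ with each $L_k$ a simple ${}^{c_k}\mathcal{H}_{n_k}$-module.

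Next I would apply $\mathcal{G}$ to this simple $A_{\widehat{n},r}$-module. By the definition of $\mathcal{G}$ in Theorem \ref{morita equivalence}, the corresponding simple $\mathcal{H}_n(G)$-module is
\[
\mathcal{G}(L_1\otimes\cdots\otimes L_r) = \mathrm{ind}_{\mathcal{H}_{\widehat{n}}(G)}^{\mathcal{H}_n(G)}\bigl(V(\widehat{n})\otimes (L_1\otimes\cdots\otimes L_r)\bigr),
\]
where the $\mathcal{H}_{\widehat{n}}(G)$-action on $V(\widehat{n})\otimes(L_1\otimes\cdots\otimes L_r)$ is the one supplied by Proposition \ref{group tensor affine 3}. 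The remaining point is to recognize this as the tensor product $(V_1^{\otimes n_1}\otimes L_1)\otimes\cdots\otimes (V_r^{\otimes n_r}\otimes L_r)$ appearing in \eqref{Hng simple}, compatibly with the isomorphism $\mathcal{H}_{\widehat{n}}(G)\cong \mathcal{H}_{n_1}(G)\otimes\cdots\otimes\mathcal{H}_{n_r}(G)$. Inspecting the $\ast$-action in Proposition \ref{group tensor affine 3} block by block, the permutation $\pi\in S_{\widehat{n}}$ decomposes as a product of permutations in the individual $S_{n_k}$, each $x_k$ acts only on the appropriate tensor factor (rescaled by $c_{l_k}$ when $c_{l_k}\neq 0$, which exactly matches the rescaling $y_k = x_k/c_{l_k}$ built into $\mathcal{G}$), and $G^n$ acts factor-wise; so the $\mathcal{H}_{\widehat{n}}(G)$-module structure does split as the outer tensor product claimed in \eqref{Hng simple}.

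Finally, the pairwise non-isomorphism and exhaustion assertions follow formally from the equivalence: the modules $D_{\widehat{n}}(L_\centerdot)$ corresponding to distinct data $(\widehat{n},L_1,\ldots,L_r)$ are non-isomorphic because their preimages under $\mathcal{F}$ are non-isomorphic simple $A_{n,r}$-modules (distinct $\widehat{n}$ give simples supported on different blocks of $A_{n,r}$; for a fixed $\widehat{n}$, non-isomorphic choices of $L_1\otimes\cdots\otimes L_r$ give non-isomorphic simples of $A_{\widehat{n},r}$ by the usual tensor-product classification), and the list is complete because every simple $A_{n,r}$-module arises this way.

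The only genuinely nontrivial step is the identification of $V(\widehat{n})\otimes(L_1\otimes\cdots\otimes L_r)$ with $(V_1^{\otimes n_1}\otimes L_1)\otimes\cdots\otimes(V_r^{\otimes n_r}\otimes L_r)$ as $\mathcal{H}_{\widehat{n}}(G)$-modules; everything else is a bookkeeping consequence of Theorem \ref{morita equivalence} together with the block decomposition of $A_{n,r}$. I expect this identification to be the main (though still routine) obstacle, since it requires one to verify that the Young-subgroup permutations and rescaled polynomial generators act the way the outer tensor product predicts, matching the case-split $c_{l_k}=0$ versus $c_{l_k}\neq 0$ coming from \eqref{Anr}.
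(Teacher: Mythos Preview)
Your proof is correct and follows exactly the same approach as the paper's, which is even more terse: it simply invokes Corollary~\ref{same simple} to land in $\HngMods$ and then Theorem~\ref{morita equivalence} to transfer the classification from $A_{n,r}$. One small correction: the algebras ${}^{c_k}\mathcal{H}_{n_k}$ are \emph{not} finite-dimensional (they contain a polynomial subalgebra), but the tensor-product classification of simples you invoke still holds because you are working in the category of finite-dimensional modules over an algebraically closed field, so Schur's lemma gives $\mathrm{End}(L_k)=\mathbb{F}$ and the usual argument goes through.
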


\begin{proof}
By Corollary~\ref{same simple}, each simple
$\mathcal{H}_n(G)$-module lies in the subcategory $\HngMods$. Now
the theorem follows by the category equivalence given in Theorem
\ref{morita equivalence}.
\end{proof}

\begin{rem}
Together with Proposition~\ref{prop:Hn0}, Theorem~\ref{IrrWHA2}
provides a complete classification of simple
$\mathcal{H}_n(G)$-modules.
\end{rem}

\subsection{Modular branching rules for $\mathcal{H}_n$}

Recall that the degenerate affine Hecke algebra $\mathcal{H}_n$ is
generated by $S_n$ and $y_1,\ldots,y_n.$ Let
$N\in\mathcal{H}_n$-{\bf mod} and $a\in\mathbb{F}$. Define
$\Delta_aN$ to be the generalized $a$-eigenspace of $y_n$ on $N$.
Since $y_n$ is central in the subalgebra
$\mathcal{H}_{n-1,1}\cong\mathcal{H}_{n-1}\otimes\mathcal{H}_1$ of
$\mathcal{H}_n$, $\Delta_aN$ is an $\mathcal{H}_{n-1,1}$-submodule
of $\text{res}_{\mathcal{H}_{n-1,1}}N$. Define
$$
e_a N
:=\text{res}^{\mathcal{H}_{n-1,1}}_{\mathcal{H}_{n-1}}(\Delta_aN).
$$
Then,
\begin{equation} \label{deg. aff. decomposition}
\text{res}_{\mathcal{H}_{n-1}}N =\bigoplus_{a\in\mathbb{F}}e_a N.
\end{equation}
Denote the socle of the $\mathcal H_{n-1}$-module $e_a N$ by
$$
\tilde{e}_aN:=\text{soc}(e_aN).
$$

The following modular branching rule for $\mathcal{H}_n$ is a
degenerate version of a result of Grojnowski-Vazirani \cite{GV}.

\begin{prop} \cite[Cor. 5.1.7, 5.1.8]{K2}\label{kle1}
Let $N$ be a simple $\mathcal{H}_n$-module and $a\in\mathbb{F}$.
Then either $\tilde{e}_aN=0$ or $\tilde{e}_aN$ is simple. Moreover,
the socle of $\text{res}_{\mathcal{H}_{n-1}}^{\mathcal{H}_n}N$ is
multiplicity-free.
\end{prop}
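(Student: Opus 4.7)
The plan is to use Frobenius reciprocity to reduce the two assertions to a statement about induced modules from $\mathcal{H}_{n-1,1}$ to $\mathcal{H}_n$. Since $y_n$ is central in $\mathcal{H}_{n-1,1} \cong \mathcal{H}_{n-1} \otimes \mathbb{F}[y_n]$, every simple $\mathcal{H}_{n-1,1}$-module has the form $L \boxtimes \mathbb{F}_a$, where $L$ is simple over $\mathcal{H}_{n-1}$ and $\mathbb{F}_a$ is the one-dimensional $\mathbb{F}[y_n]$-module on which $y_n$ acts as $a$. As an $\mathcal{H}_{n-1}$-module, $L \boxtimes \mathbb{F}_a$ restricts to $L$, so the $\mathcal{H}_{n-1}$-socle $\tilde e_a N$ of $\Delta_a N$ coincides with its $\mathcal{H}_{n-1,1}$-socle, and the multiplicity of $L$ in $\tilde e_a N$ equals
\[
\dim \text{Hom}_{\mathcal{H}_{n-1,1}}\bigl(L \boxtimes \mathbb{F}_a,\, \text{res}^{\mathcal{H}_n}_{\mathcal{H}_{n-1,1}} N\bigr) = \dim \text{Hom}_{\mathcal{H}_n}\bigl(\text{ind}_{\mathcal{H}_{n-1,1}}^{\mathcal{H}_n}(L \boxtimes \mathbb{F}_a),\, N\bigr),
\]
which by Schur's lemma counts the multiplicity of $N$ in the head of the induced module.

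The main obstacle is the key claim that the induced module $\text{ind}_{\mathcal{H}_{n-1,1}}^{\mathcal{H}_n}(L \boxtimes \mathbb{F}_a)$ has simple head. My approach follows Grojnowski-Vazirani: view $\mathcal{H}_n$ as a free right $\mathcal{H}_{n-1,1}$-module with PBW basis $\{1, s_{n-1}, s_{n-1}s_{n-2}, \ldots, s_{n-1}\cdots s_1\}$ to obtain a Mackey-type filtration of $\text{res}^{\mathcal{H}_n}_{\mathcal{H}_{n-1,1}} \text{ind}(L \boxtimes \mathbb{F}_a)$; then use the intertwining operators $\phi_k = s_k(y_k - y_{k+1}) + 1 \in \mathcal{H}_n$, which permute the joint $(y_1,\ldots,y_n)$-eigenvalues of a weight vector up to rational rescaling, to analyze the associated graded in terms of twists of $L \boxtimes \mathbb{F}_a$ by transpositions. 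A weight-space comparison with respect to a natural dominance order on eigenvalue tuples then pins down a one-dimensional highest-weight space that lies in every non-zero quotient, forcing the head of $\text{ind}(L \boxtimes \mathbb{F}_a)$ to be simple (and, via the anti-involution of $\mathcal{H}_n$ fixing the $y_i$ and inverting the $s_i$, the socle to be simple as well).

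Granting this, Frobenius reciprocity yields $[\tilde e_a N : L] \leq 1$ for all pairs $(L, a)$, so each $\tilde e_a N$ is multiplicity-free. To promote this to the assertion that $\tilde e_a N$ is zero or simple, I would observe that the isomorphism type of the simple head of $\text{ind}(L \boxtimes \mathbb{F}_a)$ uniquely determines $L$ via its formal $y$-character; hence at most one $L$ can admit a non-zero map to the fixed simple $N$, so the socle $\tilde e_a N$ is zero or a single copy of one such $L$. For the last assertion, that $\text{soc}(\text{res}_{\mathcal{H}_{n-1}} N) = \bigoplus_a \tilde e_a N$ is multiplicity-free across all $a$, the same character-theoretic observation shows that the eigenvalue $a$ is recoverable from $L$ together with the $y$-weight data of $N$, so no simple $\mathcal{H}_{n-1}$-module $L$ can lie in both $\tilde e_a N$ and $\tilde e_{a'} N$ for $a \neq a'$.
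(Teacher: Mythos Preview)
The paper does not supply its own proof of this proposition: it is quoted verbatim from Kleshchev's book \cite[Cor.~5.1.7, 5.1.8]{K2}, with the preceding sentence attributing the result to Grojnowski--Vazirani \cite{GV}. So there is no ``paper's proof'' to compare against; your sketch is in fact an outline of the Grojnowski--Vazirani argument that the paper is citing.

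As an outline your strategy is the right one, and the Frobenius-reciprocity reduction in the first paragraph is clean. But the crucial step~--- that $\text{ind}_{\mathcal{H}_{n-1,1}}^{\mathcal{H}_n}(L\boxtimes\mathbb{F}_a)$ has simple head~--- is only gestured at. The phrases ``weight-space comparison with respect to a natural dominance order'' and ``one-dimensional highest-weight space that lies in every non-zero quotient'' hide essentially all of the work: one must actually produce the filtration, identify the subquotients, and carry out a careful character argument (this is the substance of \cite{GV} and of \cite[\S5.1]{K2}). Similarly, the claim you invoke afterwards~--- that the simple head of the induced module determines $L$~--- is not a formal consequence of the head being simple; it is an additional injectivity statement (part of the ``strong multiplicity one'' package) that also requires proof. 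Without filling in these two points, the argument is a correct roadmap but not yet a proof.
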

\subsection{Modular branching rules for $\Hno$}

As above, for $N\in\Hno$-{\bf mod} and $a\in\mathbb{F}$, the
generalized $a$-eigenspace of $y_n$ on $N$, denoted also by
$\Delta_a N$, is an ${}^0\mathcal{H}_{n-1,1}$-submodule of
$\text{res}_{{}^0\mathcal{H}_{n-1,1}}N$. Then,
$\text{res}_{{}^0\mathcal{H}_{n-1}}N =\oplus_{a\in\mathbb{F}}e_a N$
where $e_a N :=\text{res}_{{}^0\mathcal{H}_{n-1}} (\Delta_aN).$
Denote the socle of the ${}^0\mathcal H_{n-1}$-module $e_a N$ by $
\tilde{e}_aN:=\text{soc}(e_aN).$ We denote by
\begin{eqnarray} \label{composition}
\widehat{n}_i^- =(n_1,\ldots,n_i-1, \ldots, n_t),
 \quad
\widehat{n}_i^+ =(n_1,\ldots,n_i +1, \ldots, n_t)
\end{eqnarray}
the compositions of $n \mp 1$ associated to a composition
$\widehat{n} =(n_1, \ldots, n_t)$ of $n$ for $1 \le i \le t$. (It is
understood that the terms involving $\widehat{n}_i^-$ disappear for
those $i$ with $n_i =0$.)

The modular branching rules for $\Hno$ are described as follows.

\begin{prop} \label{rule:Hn0}
The socle of the restriction of a simple $\Hno$-module
$D_{\underline{a}, {\mu}}$ in \eqref{eq:moduleD} to
${}^0\mathcal{H}_{n-1}$ is multiplicity-free:
\begin{eqnarray*}
\text{soc} (\text{res}_{{}^0\mathcal{H}_{n-1}} D_{\underline{a},
{\mu}})
 \cong \bigoplus_{i=1}^t
\text{ind}_{{}^0\mathcal{H}_{\widehat{\mu}_i}}^{\Hno} \left(
D^{\mu^1}_{a_1} \otimes \cdots\otimes \text{soc}
(\text{res}_{S_{|\mu^i|-1}} D^{\mu^i})_{a_i} \otimes \cdots\otimes
D^{\mu^t}_{a_t} \right).
\end{eqnarray*}
Equivalently, $\tilde{e}_a (\text{res}_{{}^0\mathcal{H}_{n-1}}
D_{\underline{a}, {\mu}})=0$ unless $a =a_i$ for some $1\le i \le
t$, and
$$
\tilde{e}_{a_i} (\text{res}_{{}^0\mathcal{H}_{n-1}}
D_{\underline{a}, {\mu}}) \cong
\text{ind}_{{}^0\mathcal{H}_{\widehat{\mu}_i}}^{\Hno} \left(
D^{\mu^1}_{a_1} \otimes \cdots\otimes \text{soc}
(\text{res}_{S_{|\mu^i|-1}} D^{\mu^i})_{a_i} \otimes \cdots\otimes
D^{\mu^t}_{a_t} \right).
$$
\end{prop}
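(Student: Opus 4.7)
The plan is to combine a Mackey-type decomposition for the skew group algebra ${}^0\mathcal{H}_n = P_n\rtimes\mathbb{F}S_n$ with Kleshchev's classical modular branching rule for the symmetric groups. The key simplification is that $u=0$ makes $s_iy_i = y_{i+1}s_i$ hold exactly, hence $s_iy_j = y_{s_i(j)}s_i$ for all $i,j$, so ${}^0\mathcal{H}_n$ is genuinely a skew group algebra and $D_{\underline{a},\mu}$ admits a vector-space basis $\{w\otimes v\}_{w\in S_n/S_{\widehat\mu}}$ on which $y_j(w\otimes v) = w\otimes a_{b(w^{-1}(j))}v$, where $b(k)$ denotes the index of the block of $\widehat\mu$ containing position $k$.

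First I would apply Mackey to $\text{res}_{{}^0\mathcal{H}_{n-1,1}}^{{}^0\mathcal{H}_n}\text{ind}_{{}^0\mathcal{H}_{\widehat\mu}}^{{}^0\mathcal{H}_n}(\cdots)$. The $(S_{n-1},S_{\widehat\mu})$-double cosets in $S_n$ are indexed by $i\in\{1,\ldots,t\}$, the $i$-th coset being characterized by $w^{-1}(n)$ lying in the $i$-th block of $\widehat\mu$, and the corresponding isotropy intersection is conjugate to $S_{\widehat{\mu}_i^-}$. By the formula for the $y_j$-action, $y_n$ acts by the single scalar $a_i$ on the $i$-th Mackey summand; since the $a_j$ are distinct, $y_n$ is already diagonalizable on the restriction, and the generalized eigenspace $\Delta_{a_i}=e_{a_i}$ coincides with that summand:
$$
e_{a_i}D_{\underline{a},\mu} \cong \text{ind}_{{}^0\mathcal{H}_{\widehat\mu_i^-}}^{{}^0\mathcal{H}_{n-1}}\bigl(D^{\mu^1}_{a_1}\otimes\cdots\otimes\bigl(\text{res}_{S_{|\mu^i|-1}}^{S_{|\mu^i|}} D^{\mu^i}\bigr)_{a_i}\otimes\cdots\otimes D^{\mu^t}_{a_t}\bigr).
$$

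Next I would pass to socles. By Kleshchev \cite{K1}, $\text{soc}(\text{res}_{S_{|\mu^i|-1}}^{S_{|\mu^i|}}D^{\mu^i})$ is a multiplicity-free sum of simple $\mathbb{F}S_{|\mu^i|-1}$-modules $D^\nu$; the outer tensor with the remaining untouched simple factors yields a semisimple ${}^0\mathcal{H}_{\widehat\mu_i^-}$-module with pairwise non-isomorphic summands. By Proposition~\ref{prop:Hn0}, induction of each such summand to ${}^0\mathcal{H}_{n-1}$ is simple, and distinct summands give distinct simples; combined with exactness of induction this produces the claimed formula for $\tilde e_{a_i}$. Multiplicity-freeness of the total socle then follows because (a) the eigenspaces $e_{a_i}$ are separated by the $y_n$-eigenvalue, (b) within each eigenspace Kleshchev's rule provides multiplicity-freeness, and (c) for $i\ne j$ the resulting simples have different tuples of partition-sizes $(|\nu^1|,\ldots,|\nu^t|)$, so are pairwise non-isomorphic by Proposition~\ref{prop:Hn0}.

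The main obstacle I foresee is making rigorous the statement that socle commutes with the induction $\text{ind}_{{}^0\mathcal{H}_{\widehat\mu_i^-}}^{{}^0\mathcal{H}_{n-1}}$ in this distinct-scalar setting, but this reduces to Proposition~\ref{prop:Hn0}: on the full subcategory of modules whose scalar labels are the fixed distinct values $a_1,\ldots,a_t$ with the prescribed block multiplicities, induction is a bijection on isomorphism classes of simples with matching Jordan–H\"older data, so an exact functor of this shape preserves both semisimplicity and socles.
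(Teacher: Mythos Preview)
Your approach is essentially the same as the paper's: apply a Mackey decomposition to identify each $e_{a_i}D_{\underline{a},\mu}$ with the induced module you wrote down, and then pass to socles. The paper's proof is two sentences---it states the Mackey isomorphism (pointing forward to Lemma~\ref{lem:Mackey} for the analogous argument) and says ``the proposition follows from finding the socles of both sides''---so your write-up simply fills in the details the paper leaves implicit, in particular the Clifford-theoretic reason why $\text{ind}_{{}^0\mathcal{H}_{\widehat{\mu}_i^-}}^{{}^0\mathcal{H}_{n-1}}$ preserves socles in the distinct-scalar setting.
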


\begin{proof}
A version of Mackey Lemma gives us
\begin{eqnarray*}
\text{res}_{{}^0\mathcal{H}_{n-1}} D_{\underline{a}, {\mu}}
 \cong \bigoplus_{i=1}^t
\text{ind}_{{}^0\mathcal{H}_{\widehat{\mu}_i}}^{\Hno} \left(
D^{\mu^1}_{a_1} \otimes \cdots\otimes (\text{res}_{S_{|\mu^i|-1}}
D^{\mu^i})_{a_i} \otimes \cdots\otimes D^{\mu^t}_{a_t} \right).
\end{eqnarray*}
(See the proof of Lemma~\ref{lem:Mackey} below for a similar
argument.) Now the proposition follows from finding the socles of
both sides of the above isomorphism.
\end{proof}

\begin{rem} \label{rule:Sn}
According to \cite{K1} and \cite[Chapter 9]{K2}, for a $p$-regular
partition $\mu$ of $n$, the $S_{n-1}$-module $\text{soc}
(\text{res}_{S_{n-1}} D^{\mu})$ has an explicit multiplicity-free
decomposition according to the eigenvalues in $\mathbb I$ of the
$n$th Jucys-Murphy element.
\end{rem}

\subsection{Modular branching rules for $\mathcal{H}_n(G)$}

We start with a preparatory result.

\begin{prop}\label{Irr commutation}
Let $\widehat{n}=(n_1, \ldots,n_r) \in\mathcal{C}_n^r$ and $L_k$
$(1\leq k\leq r)$ be a ${}^{c_k}\mathcal{H}_{n_k}$-module. Then,
\begin{align}
&\text{ind}_{\mathcal{H}_{\widehat{n}}(G)}^{\mathcal{H}_n(G)}
(V_1^{\otimes n_1}\otimes L_1) \otimes\cdots\otimes (V_r^{\otimes
n_r}\otimes L_r)\notag \\
&\cong
\text{ind}_{\mathcal{H}_{\widehat{n}^{\tau}}(G)}^{\mathcal{H}_n(G)}
(V_{\tau(1)}^{\otimes n_{\tau(1)}}\otimes
L_{\tau(1)})\otimes\cdots\otimes (V_{ \tau(r)}^{\otimes
n_{\tau(r)}}\otimes L_{\tau(r)})\notag,
\end{align}
where $\widehat{n}^{\tau}=(n_{\tau(1)},\ldots,n_{\tau(r)})$ for any
$\tau\in S_r$.
\end{prop}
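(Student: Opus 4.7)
The plan is to reduce to the case of an adjacent transposition $\tau=(k,k+1)\in S_r$ and then construct an explicit isomorphism by conjugating with a carefully chosen block-swap permutation in $S_n\subset\mathcal{H}_n(G)$. Since every $\tau\in S_r$ is a product of such adjacent transpositions and the constructions compose, the general case follows by iteration.

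For $\tau=(k,k+1)$, abbreviate $M_L := (V_1^{\otimes n_1}\otimes L_1)\otimes\cdots\otimes(V_r^{\otimes n_r}\otimes L_r)$ and $M_R := (V_{\tau(1)}^{\otimes n_{\tau(1)}}\otimes L_{\tau(1)})\otimes\cdots\otimes(V_{\tau(r)}^{\otimes n_{\tau(r)}}\otimes L_{\tau(r)})$, so the claim reads $\text{ind}_{\mathcal{H}_{\widehat{n}}(G)}^{\mathcal{H}_n(G)}M_L\cong\text{ind}_{\mathcal{H}_{\widehat{n}^{\tau}}(G)}^{\mathcal{H}_n(G)}M_R$, and denote the right-hand induced module by $R$. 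Let $w\in S_n$ be the unique shortest permutation that order-preservingly swaps the $k$-th and $(k+1)$-th blocks of positions (fixing positions outside these two blocks), and let $\phi:M_R\to M_L$ be the $\F$-linear isomorphism obtained by swapping the $k$-th and $(k+1)$-th tensor factors.

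The heart of the argument is to show that $w\otimes M_R\subset R$ is an $\mathcal{H}_{\widehat{n}}(G)$-submodule, and that $m\mapsto w\otimes\phi^{-1}(m)$ defines an isomorphism $M_L\to w\otimes M_R$ of $\mathcal{H}_{\widehat{n}}(G)$-modules. For $g\in G^n$ and $\pi\in S_{\widehat{n}}$ one has $w^{-1}gw\in G^n$ and $w^{-1}\pi w\in S_{\widehat{n}^{\tau}}$, both lying in $\mathcal{H}_{\widehat{n}^{\tau}}(G)$; pushing these across the tensor shows that the corresponding actions on $w\otimes m_R$ match the expected ones on $M_L$ under $\phi^{-1}$. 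For a polynomial generator $x_j$, Lemma~\ref{spanned set} gives $x_j w = w\,x_{w^{-1}(j)} + \sum_{u<w} u\,g'_u$ for certain $g'_u\in\F G^n$, so that in $R$,
\[
x_j\cdot (w\otimes m_R) \;=\; w\otimes x_{w^{-1}(j)}m_R \;+\; \sum_{u<w} u\otimes g'_u m_R.
\]
A priori the correction terms live in summands $\sigma\otimes M_R$ with $\sigma\neq w$ of the PBW decomposition $R=\bigoplus_{\sigma\in\Theta(\widehat{n}^{\tau})}\sigma\otimes M_R$; however, $x_j$ commutes with $\F G^n$ and so preserves $\F G^n$-isotypic components, and inspection of $\F G^n$-types shows that $w$ is the unique coset representative in $\Theta(\widehat{n}^{\tau})$ for which $\sigma\otimes M_R$ contains a copy of $V(\widehat{n})$ as an $\F G^n$-submodule, giving $I_{\widehat{n}}R=w\otimes M_R$; this forces the correction terms to vanish. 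Equivalently and more concretely, the $g'_u$'s are built from the factors $t_{i,i+1}$ arising when one commutes $x_j$ past the $s_i$'s in a reduced expression for $w$ via \eqref{symmetric and polnomial 2}--\eqref{symmetric and polynomial 3}, and each such $t_{i,i+1}$ connects a position holding a $V_k$ with one holding a $V_{k+1}$, hence acts as zero on $M_R$ by Corollary~\ref{weig-t-1}.

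Having identified $w\otimes M_R\cong M_L$ as $\mathcal{H}_{\widehat{n}}(G)$-modules, Frobenius reciprocity produces an $\mathcal{H}_n(G)$-homomorphism $\text{ind}_{\mathcal{H}_{\widehat{n}}(G)}^{\mathcal{H}_n(G)}M_L\to R$ sending $h\otimes m\mapsto hw\otimes\phi^{-1}(m)$. Its image contains $w\otimes M_R$ and is $\mathcal{H}_n(G)$-stable, and since $\mathcal{H}_n(G)\cdot w=\mathcal{H}_n(G)$ it equals all of $R$; both sides have dimension $\tfrac{n!}{n_1!\cdots n_r!}\dim_\F M_L$, so the map is an isomorphism. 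The principal obstacle is precisely the verification that the correction terms in the action of $x_j$ vanish, which relies either on the $\F G^n$-centrality of $x_j$ combined with the isotypic identification, or directly on Corollary~\ref{weig-t-1}.
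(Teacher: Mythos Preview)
Your argument is correct. The paper, however, takes a much shorter route: denoting the two induced modules by $\texttt{L}$ and $\texttt{R}$, it simply applies the category equivalence $\mathcal{F}$ of Theorem~\ref{morita equivalence} and observes that $\mathcal{F}(\texttt{L})\cong L_1\otimes\cdots\otimes L_r\cong\mathcal{F}(\texttt{R})$ as $A_{\widehat{n},r}$-modules (since for $\widehat{m}\neq\widehat{n}$ one has $I_{\widehat{m}}\texttt{L}=I_{\widehat{m}}\texttt{R}=0$, while $\text{Hom}_{\mathbb{F}G^n}(V(\widehat{n}),I_{\widehat{n}}\texttt{L})\cong\text{Hom}_{\mathbb{F}G^n}(V(\widehat{n}),I_{\widehat{n}}\texttt{R})$ are both canonically $L_1\otimes\cdots\otimes L_r$). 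Your approach is genuinely different: you produce an explicit $\mathcal{H}_n(G)$-isomorphism by conjugation with the block-swap $w$, and the crucial vanishing of the lower-order correction terms in $x_j\cdot(w\otimes m_R)$ is exactly the statement $I_{\widehat{n}}R=w\otimes M_R$ --- which is essentially the content of Lemma~\ref{ealphaM} applied to $R$, i.e.\ the same isotypic mechanism that underlies $\mathcal{F}$ itself. So your proof is more hands-on and self-contained (it does not invoke the full functorial equivalence), while the paper's proof is a two-line application of machinery already in place; in substance the two arguments rest on the same observation about $\mathbb{F}G^n$-isotypic components. One small caution: your alternative justification via Corollary~\ref{weig-t-1}, tracing each $t_{i,i+1}$ through a reduced word for $w$, is morally right but would need a careful inversion-set argument to be airtight; the isotypic argument you give first is cleaner and already sufficient.
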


\begin{proof}
Let us denote the left-hand-side and the right-hand side of the
isomorphism in the Proposition by $\texttt L$ and $\texttt R$
respectively. By Theorem~\ref{morita equivalence}, it suffices to
show that $\mathcal F(\texttt{L}) \cong \mathcal F(\texttt{R})$.
%
Indeed, for $\widehat{n}\neq \widehat{m}\in\mathcal{C}_n^r$,
$\text{Hom}_{\mathbb{F}G^n}(V(\widehat{m}),I_{\widehat{m}}\texttt{L})
=
\text{Hom}_{\mathbb{F}G^n}(V(\widehat{m}),I_{\widehat{m}}\texttt{R})
=0$ (actually $I_{\widehat{m}}\texttt{L} = I_{\widehat{m}}\texttt{R}
=0$.) Also,
$\text{Hom}_{\mathbb{F}G^n}(V(\widehat{n}),I_{\widehat{n}}\texttt{L})
\cong L_1 \otimes \cdots \otimes L_r \cong
\text{Hom}_{\mathbb{F}G^n}(V(\widehat{n}),I_{\widehat{n}}\texttt{R}).$
This proves the proposition.
\end{proof}

Let us denote by $\mathcal{H}_{n-1,1}(G)$ the subalgebra of
$\mathcal{H}_n(G)$ generated by $G^n, x_1,\ldots, x_n$ and
$S_{n-1}$. Then $\mathcal{H}_{n-1,1}(G)\cong
\mathcal{H}_{n-1}(G)\otimes\mathcal{H}_1(G)$. We shall extend the
notation for $D_{\widehat{n}}(L_\centerdot)$, see \eqref{Hng
simple}, to the case when $L_k$ are not necessarily simple modules.
The following can be regarded as a variant of Mackey's lemma.

\begin{lem}  \label{lem:Mackey}
Let $\widehat{n}=(n_1, \ldots,n_r) \in\mathcal{C}_n^r$ and $L_k$
$(1\leq k\leq r)$ be a ${}^{c_k}\mathcal{H}_{n_k}$-module. Then,
\begin{align}  \label{eq:Mackey} \text{res}_{\mathcal{H}_{n-1,1}(G)}
D_{\widehat{n}}(L_\centerdot)
 \cong
\bigoplus_{a\in\mathbb{F}, 1\leq k\leq r}
D_{\widehat{n}_k^-}({e}_a L_\centerdot) \otimes(V_k\otimes L(a)),
\end{align}
where $D_{\widehat{n}_k^-}({e}_a L_\centerdot)$ denotes the
$\mathcal{H}_{n-1}(G)$-module
$$
\text{ind}_{\mathcal{H}_{\widehat{n}_k^-}(G)}^{\mathcal{H}_{n-1}(G)}
\left( (V_1^{\otimes n_1}\otimes L_1)\otimes\cdots\otimes
(V_k^{\otimes n_k-1}\otimes {e}_aL_k)\otimes\cdots\otimes
(V_r^{\otimes n_r}\otimes L_r) \right).
$$
\end{lem}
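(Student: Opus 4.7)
The plan is to decompose the restriction along two independent pieces of data: (i) the irreducible $\F G$-type appearing at position $n$, and (ii) the generalized eigenvalue of $x_n$. First, observe that $\mathcal H_{n-1,1}(G)$ is generated by $G^n$, $x_1,\ldots,x_n$ and $s_1,\ldots,s_{n-2}$. None of these generators permutes position $n$ among the $n$ factors of $G^n$: they interact with position $n$ only through the $n$-th copy of $G$ or through $x_n$, which commutes with $G^n$ by \eqref{comm:xg}. Combined with Proposition~\ref{group tensor affine 4}, which places $D_{\widehat n}(L_\centerdot)$ in $\HngMods$, this yields a decomposition
$$\text{res}_{\mathcal H_{n-1,1}(G)} D_{\widehat n}(L_\centerdot) \cong \bigoplus_{k:\, n_k>0} D^{(k)}$$
of $\mathcal H_{n-1,1}(G)$-modules, where $D^{(k)}$ collects those $\F G^n$-isotypes of $D_{\widehat n}(L_\centerdot)$ whose position $n$ carries $G$-type $V_k$.

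Next, identify each $D^{(k)}$ as an induced module. For $1 \le k \le r$ with $n_k > 0$, set $N_k = n_1+\cdots+n_k$ and let $\sigma_k = (N_k,\,N_k+1,\,\ldots,\,n) \in S_n$; these $\sigma_k$ form a set of representatives of the double cosets $S_{n-1,1}\backslash S_n / S_{\widehat n}$, and $\sigma_k$ moves the last slot of the $k$-th block of $V(\widehat n)$ to position $n$. Using Lemma~\ref{spanned set} and the PBW basis (Theorem~\ref{PBW basis}) to push this double-coset decomposition through the induction defining $D_{\widehat n}(L_\centerdot)$, a Mackey-style argument produces
$$D^{(k)} \cong \text{ind}_{\mathcal H_{\widehat n_k^-,1}(G)}^{\mathcal H_{n-1,1}(G)} Q_k,$$
where $Q_k$ is the restriction of $M_1\otimes\cdots\otimes M_r$ (with $M_j := V_j^{\otimes n_j}\otimes L_j$) along $\sigma_k$-conjugation to the subalgebra $\mathcal H_{n_1}(G)\otimes\cdots\otimes\mathcal H_{n_k-1,1}(G)\otimes\cdots\otimes\mathcal H_{n_r}(G)$. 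Only the $k$-th tensor slot is actually restricted.

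The remaining step is to compute $M_k|_{\mathcal H_{n_k-1,1}(G)}$. Decomposing the $\mathcal H_{n_k}$-module $L_k$ into generalized $y_{n_k}$-eigenspaces, $L_k = \bigoplus_a \Delta_a L_k$, and using Proposition~\ref{group tensor affine 3} to separate off the last $V_k$-slot of $V_k^{\otimes n_k}$ into the $\mathcal H_1(G)$-factor, we obtain
$$M_k\big|_{\mathcal H_{n_k-1,1}(G)} \cong \bigoplus_{a \in \F}\bigl(V_k^{\otimes n_k-1}\otimes e_a L_k\bigr) \otimes \bigl(V_k \otimes L(a)\bigr),$$
with the convention that $x_n$ acts with generalized eigenvalue controlled by $L(a)$. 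Substituting back into $Q_k$ and commuting $\text{ind}_{\mathcal H_{\widehat n_k^-,1}(G)}^{\mathcal H_{n-1,1}(G)}$ past the direct sum over $a$ yields the asserted formula \eqref{eq:Mackey}.

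The main obstacle lies in the last step: the natural $\mathcal H_{n_k-1,1}(G)$-action on $V_k^{\otimes n_k}\otimes L_k$ is \emph{not} an external tensor product, since both $\mathcal H_{n_k-1}(G)$ and $\mathcal H_1(G)$ touch $L_k$ via the $y_i$'s. Passing to generalized $y_{n_k}$-eigenspaces is what makes the $\mathcal H_1(G)$-action essentially scalar on each summand, allowing the external tensor-product description; handling the generalized (as opposed to strict) eigenvalue decomposition requires the same book-keeping as in Proposition~\ref{kle1} and its proof in \cite{K2}.
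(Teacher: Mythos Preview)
Your argument is correct in outline but takes a different route from the paper. You run a full Mackey decomposition over the double cosets $S_{n-1,1}\backslash S_n/S_{\widehat n}$, represented by your cycles $\sigma_k$, and then split each resulting piece by the generalized $y_{n_k}$-eigenvalue. The paper instead treats only the identity double coset directly: for $k=r$ (more precisely, the largest $k$ with $n_k\neq 0$) the embedding of $D_{\widehat{n}_r^-}(e_a L_\centerdot)\otimes(V_r\otimes L(a))$ into $\text{res}_{\mathcal{H}_{n-1,1}(G)} D_{\widehat n}(L_\centerdot)$ is immediate from Mackey's Lemma. It then invokes Proposition~\ref{Irr commutation} to permute the tensor factors of $D_{\widehat n}(L_\centerdot)$ and thereby obtain the analogous submodules for every $k$. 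Having exhibited all the summands as submodules, the paper observes that the $\mathcal H_1(G)$-modules $V_k\otimes L(a)$ are pairwise non-isomorphic simples for distinct pairs $(k,a)$, so the sum is direct, and finishes by a dimension count. The paper's route is shorter because Proposition~\ref{Irr commutation} replaces your explicit coset bookkeeping; your route is more self-contained and makes the Mackey structure transparent.

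The obstacle you flag at the end is genuine and is glossed over in the paper's proof as well. On the summand attached to $(k,a)$ the element $x_n$ acts (through $y_{n_k}$) with \emph{generalized} eigenvalue rather than as the scalar $a$, so the identification with an external tensor product $(\cdots)\otimes(V_k\otimes L(a))$ of $\mathcal H_{n-1}(G)\otimes\mathcal H_1(G)$-modules is not literal unless $y_{n_k}$ happens to act semisimply on $\Delta_a L_k$. Both arguments really produce a direct-sum decomposition indexed by the pair ($G$-type of position $n$, generalized $x_n$-eigenvalue), and \eqref{eq:Mackey} should be read either in the Grothendieck group or as an isomorphism after passing to the associated graded for the $(x_n-a)$-adic filtration on each block. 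This is harmless for the intended application: in Theorem~\ref{rule:Hng} one passes to the socle, where every simple $\mathcal H_1(G)$-constituent has $x_n$ acting by a genuine scalar, and the issue disappears. Your reference to \cite{K2} for this bookkeeping is appropriate.
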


\begin{proof}
It can be easily checked that $D_{\widehat{n}_r^-}({e}_a
L_\centerdot)\otimes (V_r\otimes L(a))$ is a
$\mathcal{H}_{n-1,1}(G)$-submodule of
$\text{res}_{\mathcal{H}_{n-1,1}(G)}
D_{\widehat{n}}(L_\centerdot)$ for all $a\in\mathbb{F}$ by
Mackey's Lemma. (It is understood that we take the biggest $k$
satisfying $n_k\neq 0$ if $n_r=0$.) Then Proposition~\ref{Irr
commutation} implies that $D_{\widehat{n}_k^-}({e}_a L_\centerdot)
\otimes(V_k\otimes L(a))$ is $\mathcal{H}_{n-1,1}(G)$-submodule of
$\text{res}_{\mathcal{H}_{n-1,1}(G)}
D_{\widehat{n}}(L_\centerdot)$ for each $a\in\mathbb{F}$ and
$1\leq k\leq r$, and hence we have
$$
\sum_{a\in\mathbb{F}, 1\leq k\leq r} D_{\widehat{n}_k^-}({e}_a
L_\centerdot) \otimes(V_k\otimes
L(a))\subseteq\text{res}_{\mathcal{H}_{n-1,1}(G)}
D_{\widehat{n}}(L_\centerdot).
$$
Since $V_k\otimes L(a)$ are pairwise non-isomorphic simple
$\mathcal{H}_1(G)$-modules for distinct $(k,a)$, the above sum is a
direct sum and then \eqref{eq:Mackey} follows by a dimensional
counting argument.
\end{proof}

We are now ready to establish the modular branching rules for
$\Hng$.

\begin{thm} \label{rule:Hng}
Retain the notation of the simple $\mathcal{H}_n(G)$-module
$D_{\widehat{n}}(L_\centerdot)$ in \eqref{Hng simple}. Then,
\begin{align*}
\text{soc}\left(\text{res}_{\mathcal{H}_{n-1,1}(G)}
D_{\widehat{n}}(L_\centerdot)\right)
 \cong
\bigoplus_{a\in\mathbb{F}, 1\leq k\leq r}
D_{\widehat{n}_k^-}(\tilde{e}_a L_\centerdot) \otimes(V_k\otimes
L(a)),
\end{align*}
where $D_{\widehat{n}_k^-}(\tilde{e}_a L_\centerdot)$ denotes the
semisimple $\mathcal{H}_{n-1}(G)$-module
$$\text{ind}_{\mathcal{H}_{\widehat{n}_k^-}(G)}^{\mathcal{H}_{n-1}(G)}
\left((V_1^{\otimes n_1}\otimes L_1)\otimes\cdots\otimes
(V_k^{\otimes n_k-1}\otimes \tilde{e}_aL_k)\otimes\cdots\otimes
(V_r^{\otimes n_r}\otimes L_r) \right).
$$
\end{thm}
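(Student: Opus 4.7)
The plan is to combine Lemma~\ref{lem:Mackey} with the category equivalence of Theorem~\ref{morita equivalence} and the modular branching rules on the affine Hecke side (Propositions~\ref{kle1} and~\ref{rule:Hn0}).

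First I apply Lemma~\ref{lem:Mackey} to rewrite
$$
\text{res}_{\mathcal{H}_{n-1,1}(G)} D_{\widehat{n}}(L_\centerdot)
 \cong
\bigoplus_{a\in\mathbb{F},\, 1\leq k\leq r} D_{\widehat{n}_k^-}(e_a L_\centerdot) \otimes (V_k\otimes L(a)),
$$
and take socles termwise. Under the isomorphism $\mathcal{H}_{n-1,1}(G) \cong \mathcal{H}_{n-1}(G) \otimes \mathcal{H}_1(G)$, each $V_k\otimes L(a)$ is a finite-dimensional simple $\mathcal{H}_1(G)$-module. Since $\mathbb F$ is algebraically closed, any $A\otimes B$-submodule of a tensor product $M\otimes N$ with $N$ simple has the form $M'\otimes N$ for some $A$-submodule $M'\subseteq M$; hence
$$
\text{soc}\bigl(D_{\widehat{n}_k^-}(e_a L_\centerdot)\otimes(V_k\otimes L(a))\bigr)
\cong \text{soc}\bigl(D_{\widehat{n}_k^-}(e_a L_\centerdot)\bigr)\otimes(V_k\otimes L(a)).
$$
The theorem thus reduces to the identification $\text{soc}(D_{\widehat{n}_k^-}(e_a L_\centerdot))\cong D_{\widehat{n}_k^-}(\tilde{e}_a L_\centerdot)$ in $\mathcal{H}_{n-1}(G)\text{-\bf mod}$.

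I establish this identification by transporting the socle computation through the equivalence $\mathcal{F}$ of Theorem~\ref{morita equivalence}. Since $D_{\widehat{n}_k^-}(e_a L_\centerdot)$ is induced from an $\mathbb F G^{n-1}$-module isotypic of type $V(\widehat{n}_k^-)$, it lies in $\mathcal{H}_{n-1}(G)\text{-\bf mod}^s$; by the explicit formula for the inverse functor $\mathcal{G}$, it is precisely $\mathcal{G}$ applied to the $A_{n-1,r}$-module concentrated in the $\widehat{n}_k^-$-summand by $L_1\otimes\cdots\otimes e_a L_k\otimes\cdots\otimes L_r$, regarded as an $A_{\widehat{n}_k^-,r}$-module. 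Equivalences preserve socles, so I may compute on the $A_{\widehat{n}_k^-,r}$-side. Iterating the simple-tensor-anything principle once per tensor factor, using that every $L_j$ with $j\neq k$ is simple over ${}^{c_j}\mathcal{H}_{n_j}$, gives
$$
\text{soc}\bigl(L_1\otimes\cdots\otimes e_a L_k\otimes\cdots\otimes L_r\bigr)
\cong L_1\otimes\cdots\otimes\tilde{e}_a L_k\otimes\cdots\otimes L_r,
$$
and applying $\mathcal{G}$ returns the desired identification.

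Finally, the right-hand side of the theorem is semisimple (as its description as a socle demands): by Proposition~\ref{kle1} when $c_k\neq 0$ and by Proposition~\ref{rule:Hn0} when $c_k=0$, each $\tilde{e}_a L_k$ is either zero or simple, so Theorem~\ref{IrrWHA2} makes every nonzero summand $D_{\widehat{n}_k^-}(\tilde{e}_a L_\centerdot)\otimes(V_k\otimes L(a))$ a simple $\mathcal{H}_{n-1,1}(G)$-module. The main technical obstacle is verifying that $\mathcal{F}$ interchanges induction from $\mathcal{H}_{\widehat{n}_k^-}(G)$ with the evident inclusion of a summand into $A_{n-1,r}\text{-\bf mod}$; once this compatibility is in hand, the remainder reduces to the factorwise branching already recorded in Propositions~\ref{kle1} and~\ref{rule:Hn0}.
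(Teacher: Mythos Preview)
Your proof is correct and follows the same route as the paper's: apply Lemma~\ref{lem:Mackey}, then identify $\text{soc}\,D_{\widehat{n}_k^-}(e_a L_\centerdot)$ with $D_{\widehat{n}_k^-}(\tilde{e}_a L_\centerdot)$. The paper's proof merely asserts this last identification in one sentence; you supply the justification via the equivalence of Theorem~\ref{morita equivalence}, which is exactly the intended mechanism. One small correction to your final paragraph: Proposition~\ref{rule:Hn0} does not guarantee that $\tilde{e}_a L_k$ is simple when $c_k=0$ (only that it is semisimple, since $\text{soc}(\text{res}_{S_{|\mu^i|-1}}D^{\mu^i})$ may have several summands), so an individual term $D_{\widehat{n}_k^-}(\tilde{e}_a L_\centerdot)\otimes(V_k\otimes L(a))$ need not be simple --- but that paragraph is redundant anyway, since semisimplicity of the right-hand side is already forced by its identification as a socle.
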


\begin{proof}
Follows from Lemma~\ref{lem:Mackey} by observing that the socle of
the $\mathcal{H}_{n-1}(G)$-module $D_{\widehat{n}_k^-}({e}_a
L_\centerdot)$ is $D_{\widehat{n}_k^-}(\tilde{e}_a L_\centerdot)$.
\end{proof}

\subsection{Modular branching rules for $G_n$}

Let us denote by $G_{\widehat{n}} = G_{n_1} \times \cdots \times
G_{n_r}$ the subgroups of the wreath product $G_n$ for $\widehat{n}
=(n_1,\ldots, n_r) \in \Cnr$.
Recall \cite[Theorem~ 4.3.34]{JK} that a simple $G_n$-module is
isomorphic to
\begin{eqnarray*}
D_{\widehat{n}}^{\mu^\centerdot} =
\text{ind}^{G_n}_{G_{\widehat{n}}} \left((V_1^{\otimes n_1}\otimes
 D^{\mu^1})\otimes\cdots \otimes
(V_r^{\otimes n_r}\otimes D^{\mu^r}) \right)
\end{eqnarray*}
where $\mu^1,\ldots, \mu^r$ are $p$-regular partitions with
$\widehat{n} =(|\mu^1|,\ldots, |\mu^r|) \in \Cnr$. The proof of the
following modular branching rules for wreath products is similar to
Proposition~\ref{rule:Hn0} and Theorem~\ref{rule:Hng} and will be
skipped.

\begin{prop} \label{rule:Gn}
Retain notations above. Then the socle of $\text{res}_{G_{n-1,1}}
D_{\widehat{n}}^{\mu^\centerdot}$ is isomorphic to
 $
\bigoplus_{1\leq k\leq r} D_{\widehat{n}_k^-}^{\text{soc}
(\mu^\centerdot)} \otimes V_k,
 $
where $D_{\widehat{n}_k^-}^{\text{soc} (\mu^\centerdot)}$ denotes
the semisimple $G_{n-1}$-module
$$
\text{ind}_{G_{\widehat{n}_k^-}}^{G_{n-1}} \left((V_1^{\otimes
n_1}\otimes D^{\mu^1})\otimes\cdots\otimes (V_k^{\otimes n_k-1}
\otimes \text{soc} (\text{res}_{S_{n_k-1}}D^{\mu^k}))
\otimes\cdots\otimes (V_r^{\otimes n_r}\otimes D^{\mu^r}) \right).
$$

\end{prop}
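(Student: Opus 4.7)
The plan is to imitate Lemma \ref{lem:Mackey} combined with the socle extraction of Theorem \ref{rule:Hng}, replacing the wreath Hecke algebra $\Hng$ by the group algebra $\F G_n$ throughout and the factors ${}^{c_k}\mathcal H_{n_k}$-modules $L_k$ by the simple $\F S_{n_k}$-modules $D^{\mu^k}$. The first step is the wreath-group analog of Proposition \ref{Irr commutation}: for any $\tau\in S_r$,
$$
D_{\widehat{n}}^{\mu^\centerdot}\;\cong\;
\text{ind}^{G_n}_{G_{\widehat{n}^{\tau}}}\!\left((V_{\tau(1)}^{\otimes n_{\tau(1)}}\otimes D^{\mu^{\tau(1)}})\otimes\cdots \otimes (V_{\tau(r)}^{\otimes n_{\tau(r)}}\otimes D^{\mu^{\tau(r)}}) \right),
$$
which follows by conjugation with a permutation in $S_n \subset G_n$ that shuffles the $r$ blocks. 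This lets me freely reorder the blocks in any induced tensor product.

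Next I would apply the classical Mackey double-coset formula to $\text{res}_{G_{n-1,1}} D_{\widehat{n}}^{\mu^\centerdot}$. The double cosets $G_{n-1,1}\backslash G_n / G_{\widehat{n}}$ are indexed by those $k\in\{1,\ldots,r\}$ with $n_k\neq 0$; as representative one picks a permutation moving a letter from the $k$-th block to position $n$. After applying step one to bring the $k$-th block into the rightmost slot, the resulting summand is identified with $D_{\widehat{n}_k^-}^{\text{res}(\mu^\centerdot)}\otimes V_k$, where by $D_{\widehat{n}_k^-}^{\text{res}(\mu^\centerdot)}$ I mean the induced module obtained from $D_{\widehat{n}}^{\mu^\centerdot}$ by replacing $D^{\mu^k}$ by $\text{res}_{S_{n_k-1}} D^{\mu^k}$ in the $k$-th slot. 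Because the simple $\F G$-modules $V_1,\ldots,V_r$ are pairwise non-isomorphic, distinct $k$ contribute non-isomorphic simple $\F G$-summands to the last tensor factor, so the sum over $k$ is direct. This yields the $\F G_n$-analog of Lemma \ref{lem:Mackey}.

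Finally I take socles. Socle commutes with finite direct sums and with tensoring by the simple module $V_k$, so it suffices to show
$$
\text{soc}\, D_{\widehat{n}_k^-}^{\text{res}(\mu^\centerdot)} \;\cong\; D_{\widehat{n}_k^-}^{\text{soc}(\mu^\centerdot)}.
$$
By Kleshchev's branching rule for $S_{n_k}$ (Remark \ref{rule:Sn}), $\text{soc}(\text{res}_{S_{n_k-1}} D^{\mu^k})$ is a multiplicity-free direct sum $\bigoplus_\nu D^\nu$. For each such $\nu$ the module with $D^\nu$ inserted in the $k$-th slot is simple as a $G_{\widehat{n}_k^-}$-module, and its induction to $G_{n-1}$ is a simple $G_{n-1}$-module by the classification recalled just above the proposition; moreover, different choices of $\nu$ give pairwise non-isomorphic simples. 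Hence $D_{\widehat{n}_k^-}^{\text{soc}(\mu^\centerdot)}$ is semisimple and embeds into $\text{soc}\, D_{\widehat{n}_k^-}^{\text{res}(\mu^\centerdot)}$.

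The main obstacle is the reverse inclusion, i.e.\ showing that no additional simple submodules can appear. I would argue this in the spirit of Lemma \ref{ealphaM}: a simple $G_{n-1}$-submodule of $D_{\widehat{n}_k^-}^{\text{res}(\mu^\centerdot)}$ restricts to an $\F G^{n-1}$-submodule whose isotypical component of type $V_1^{\otimes n_1}\otimes\cdots\otimes V_k^{\otimes n_k-1}\otimes\cdots\otimes V_r^{\otimes n_r}$ is nonzero, and Frobenius reciprocity then forces it to come from a simple $G_{\widehat{n}_k^-}$-submodule of the inducing module. Such a submodule is itself a tensor product of simple pieces, so its $k$-th tensor factor is a simple $S_{n_k-1}$-submodule of $\text{res}_{S_{n_k-1}} D^{\mu^k}$, hence lies in the socle. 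Combining this with the Mackey decomposition of step two completes the proof.
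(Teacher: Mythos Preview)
Your proposal is correct and follows precisely the strategy the paper indicates: the paper omits this proof entirely, stating only that it is similar to Proposition~\ref{rule:Hn0} and Theorem~\ref{rule:Hng}, and your argument is exactly that adaptation---a Mackey decomposition (the $\F G_n$-analog of Lemma~\ref{lem:Mackey}, using block reordering as in Proposition~\ref{Irr commutation}) followed by socle extraction via the Clifford-theoretic correspondence between simple $G_{n-1}$-submodules and simple $S_{\widehat{n}_k^-}$-submodules (the wreath-product analog of Lemma~\ref{ealphaM} and Proposition~\ref{group tensor affine 4}). You have in fact supplied the details for the socle step that the paper leaves implicit.
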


\section{Cyclotomic wreath Hecke algebras and crystals}\label{crystal operators}

\subsection{A block decomposition}


We shall construct a decomposition of a module $M$ in $\HngMods$,
similar to \cite[Section 4.1, 4.2]{K2}. For any
$\underline{a}=(a_1,\ldots,a_n)\in\mathbb{F}^n$, let
$M_{\underline{a}}$ be the simultaneous generalized eigenspace of
$M$ for the commuting operators $x_1,\ldots,x_n$ with eigenvalues
$a_1,\ldots,a_n$. Then as a $P_n$-module, we have
$$M=\bigoplus_{\underline{a}\in\mathbb{F}^n}M_{\underline{a}}.$$

A given $\underline{a}\in\mathbb{F}^n$ defines a 1-dimensional
representation of the algebra $\Lambda_n$ of symmetric polynomials
in $x_1,\ldots,x_n$:
$$
\omega_{\underline{a}}:\Lambda_n\rightarrow \mathbb{F}, \quad
f(x_1,\ldots,x_n)=f(a_1,\ldots,a_n).
$$
Write $\underline{a}\sim\underline{b}$ if they lie in the same
$S_n$-orbit. Observe that $\underline{a}\sim\underline{b}$ if and
only if $\omega_{\underline{a}}=\omega_{\underline{b}}$. For each
orbit $\gamma\in\mathbb{F}^n/\sim$, we set
$\omega_{\gamma}:=\omega_{\underline{a}}$ for any
$\underline{a}\in\gamma$. Let
$$
M[\gamma] =\{v\in M| (z-\omega_{\gamma}(z))^Nv =0 \text{ for all }
z\in\Lambda_n \text{ and } N\gg 0 \}.
$$
Then we have
$$
M[\gamma]=\bigoplus_{\underline{a}\in\gamma}M_{\underline{a}}.
$$
Since $\Lambda_n$ is contained in the center of $\mathcal{H}_n(G)$
by Theorem \ref{center}, $M[\gamma]$ is an
$\mathcal{H}_n(G)$-module and we have the following decomposition
in $\HngMods$:
\begin{equation} \label{decomposition 2}
M=\bigoplus_{\gamma\in\mathbb{F}^n/\sim}M[\gamma].
\end{equation}
By (\ref{decomposition 2}) and recalling the decomposition in
Lemma~\ref{Malpha}, we define, for $\n \in \Cnr \text{ and
}\gamma\in\mathbb{F}^n/\sim$, that
$$
M[\n,\gamma]:=M_{\n} \cap M[\gamma].
$$
Since $x_1,\ldots,x_n$ commute with $G^n$, it follows that
$M[\n,\gamma]=(M_{\n})[\gamma]=(M[\gamma])_{\n}.$ Then we have the
following decomposition in $\HngMods$:
\begin{eqnarray} \label{blocks}
M=\bigoplus_{\n \in \Cnr, \gamma\in\mathbb{F}^n/\sim}M[\n,\gamma].
\end{eqnarray}
This gives us a block decomposition of $\HngMods$ by applying
Theorem~\ref{morita equivalence} and the well known block
decomposition for $\Hn$ (and a simpler one for $\Hno$).

\subsection{The cyclotomic wreath Hecke algebras}
Recall scalars $c_k \in \F$ from Lemma~\ref{weig-t-0}. {\bf In the
remainder of this section we assume that $c_k\in\mathbb{I}$ for
all $1\leq k\leq r$} (see however Remark~\ref{rem:general p} on
the general case). Note that the assumption is automatically
satisfied if $p$ does not divide the order of $G$, since $c_k=
|G|/d_k$ by Lemma~\ref{weig-t-0}.

An $\mathcal{H}_n(G)$-module is called {\em integral} if it is
finite dimensional and all eigenvalues of $x_1,\ldots,x_n$ on $M$
belong to $\mathbb{I}$. Denote by $\mathcal{H}_n(G)$-{\bf
mod}$_{\mathbb{I}}$ the full subcategory of $\HngMods$ consisting
of all integral $\Hng$-modules which are semisimple over the
subalgebra $\F G^n$.  It is known \cite[Section 7.1]{K2} that the
study of simple modules for the degenerate affine Hecke algebra
$\Hn$ reduces to those of integral simple $\Hn$-modules (the case
of $\Hno$ is similar and simpler). Then by Theorem~\ref{morita
equivalence} and Corollary~\ref{same simple}, to study simple
$\mathcal{H}_n(G)$-modules, it suffices to study simple objects in
$\mathcal{H}_n(G)$-{\bf mod}$_{\mathbb{I}}$.

Introduce the {\em intertwining elements}:
$$\Omega_i:=s_i(x_i-x_{i+1})+t_{i,i+1}, \qquad 1\leq i\leq n-1.$$

\begin{lem}
The following identities hold in $\mathcal{H}_n(G)$:
\begin{align}
\Omega_i^2=t_{i,i+1}^2-(x_i&-x_{i+1})^2\label{intertwine 1}\\
\Omega_ix_i=x_{i+1}\Omega_i,\quad
\Omega_ix_{i+1}&=x_i\Omega_i,\quad \Omega_ix_j=x_j\Omega_i, \quad
j\neq i,i+1\label{intertwine 2}.
\end{align}
\end{lem}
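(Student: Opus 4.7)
The proof is a direct calculation from the defining relations, so the plan is mainly to identify which relations enter where and organize the bookkeeping. Throughout, I will repeatedly use three facts: $t_{i,i+1}\in\mathbb F G^n$ commutes with every $x_j$ by \eqref{comm:xg}; $s_i$ commutes with $t_{i,i+1}$ by \eqref{s and t}; and the two ``swap'' relations
\[
s_i x_i = x_{i+1}s_i - t_{i,i+1},\qquad s_i x_{i+1} = x_i s_i + t_{i,i+1}
\]
coming from \eqref{symmetric and polnomial 2} and \eqref{symmetric and polynomial 3}.

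First I would dispatch the commutation identities in \eqref{intertwine 2}. The claim $\Omega_i x_j = x_j\Omega_i$ for $j\neq i, i+1$ is immediate, since each of $s_i$, $t_{i,i+1}$, and the polynomial $x_i-x_{i+1}$ commutes with $x_j$ by \eqref{Sn x} and \eqref{comm:xg}. For $\Omega_i x_i = x_{i+1}\Omega_i$ I would first derive the useful simplification
\[
s_i(x_i-x_{i+1}) = (x_{i+1}-x_i)s_i - 2t_{i,i+1}
\]
by subtracting the two swap relations above. Then, writing $a := x_i-x_{i+1}$, I would compute $\Omega_i x_i = (s_i a + t_{i,i+1})x_i$ by moving $x_i$ leftward one swap at a time, and separately $x_{i+1}\Omega_i = x_{i+1}(s_i a + t_{i,i+1})$ using the rearranged swap $x_{i+1}s_i = s_i x_i + t_{i,i+1}$. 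The identity $\Omega_i x_{i+1} = x_i\Omega_i$ is done in the same way with the roles of $x_i$ and $x_{i+1}$ exchanged.

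The main step is \eqref{intertwine 1}. Set again $a = x_i-x_{i+1}$ and expand
\[
\Omega_i^2 = (s_i a)(s_i a) + (s_i a)t_{i,i+1} + t_{i,i+1}(s_i a) + t_{i,i+1}^2.
\]
For the leading term, I would first compute $s_i a s_i = -a - 2t_{i,i+1}s_i$ (multiplying the relation $s_i a = -a s_i - 2t_{i,i+1}$ on the right by $s_i$ and using $s_i^2=1$), and then $s_i a s_i a = -a^2 + 2t_{i,i+1}\,a s_i + 4 t_{i,i+1}^2$ by applying $s_i a = -a s_i - 2t_{i,i+1}$ once more. For each of the two mixed terms I would use commutativity of $t_{i,i+1}$ with $a$ and with $s_i$ to obtain
\[
(s_i a) t_{i,i+1} = -t_{i,i+1}\,a s_i - 2 t_{i,i+1}^2,\qquad t_{i,i+1}(s_i a) = -t_{i,i+1}\,a s_i - 2 t_{i,i+1}^2.
\]
Adding everything, the $t_{i,i+1}\,a s_i$ contributions sum to $(2-1-1)t_{i,i+1}\,a s_i = 0$ and the $t_{i,i+1}^2$ contributions sum to $(4-2-2+1)t_{i,i+1}^2 = t_{i,i+1}^2$, leaving $\Omega_i^2 = -a^2 + t_{i,i+1}^2 = t_{i,i+1}^2 - (x_i-x_{i+1})^2$.

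The only nontrivial aspect is keeping track of the $t_{i,i+1}$-factors during the expansion in \eqref{intertwine 1}, since they generate two different-looking error terms ($t_{i,i+1}\,a s_i$ and $t_{i,i+1}^2$) that must cancel by design. The three key observations --- that $t_{i,i+1}$ is central in $P_n(G)$, that it commutes with $s_i$, and that $s_i a s_i = -a - 2 t_{i,i+1}s_i$ --- are exactly what makes this cancellation work, so no further ingredient should be needed.
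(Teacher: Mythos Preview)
Your proposal is correct and follows essentially the same approach as the paper: both are direct computations using \eqref{comm:xg}, \eqref{symmetric and polnomial 2}, \eqref{symmetric and polynomial 3}, and \eqref{s and t}, with the key reduction $s_i(x_i-x_{i+1}) = (x_{i+1}-x_i)s_i - 2t_{i,i+1}$. The only cosmetic difference is that the paper immediately combines the two cross terms in $\Omega_i^2$ into $2t_{i,i+1}s_i(x_i-x_{i+1})$ (since $t_{i,i+1}$ commutes with both $s_i$ and $x_i-x_{i+1}$) and cancels this against the error term $-2s_it_{i,i+1}(x_i-x_{i+1})$ from $s_i a s_i a$ in one step, whereas you expand all four terms fully and then tally the $t_{i,i+1}as_i$ and $t_{i,i+1}^2$ contributions separately.
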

\begin{proof}
By (\ref{symmetric and polnomial 2}) and (\ref{s and t}), we
calculate that
\begin{align}
\Omega^2_i &=
s_i(x_i-x_{i+1})s_i(x_i-x_{i+1}) +2t_{i,i+1}s_i(x_i-x_{i+1})+t_{i,i+1}^2\notag\\
&=(x_{i+1}-x_i-2s_it_{i,i+1})(x_i-x_{i+1}) +2t_{i,i+1}s_i(x_i-x_{i+1})+t_{i,i+1}^2\notag\\
&=t_{i,i+1}^2 -(x_i-x_{i+1})^2. \notag
\end{align}
Also,
\begin{align}
\Omega_ix_i&=s_ix_i(x_i-x_{i+1})+x_it_{i,i+1}
=x_{i+1}s_i(x_i-x_{i+1})-t_{i,i+1}(x_i-x_{i+1})+x_it_{i,i+1}\notag\\
&=x_{i+1}s_i(x_i-x_{i+1})+t_{i,i+1}x_{i+1}=x_{i+1}\Omega_i\notag.
\end{align}
Similarly, we have $\Omega_ix_{i+1}=x_i\Omega_i$. For $j\neq
i,i+1$, $s_ix_j=x_js_i$, and hence $\Omega_ix_j=x_j\Omega_i$.
\end{proof}

\begin{lem}\label{integral1}
Let $M\in\mathcal{H}_n(G)$-{\bf mod}${}^s$ and fix $j$ with $1\leq
j\leq n$. Assume that all eigenvalues of $x_j$ on $M$ belong to
$\mathbb{I}$. Then $M$ is integral.
\end{lem}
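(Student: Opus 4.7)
The plan is to use the intertwining elements $\Omega_i$ to transfer spectral information between $x_i$ and $x_{i+1}$, and then iterate. It suffices to prove the adjacent claim: if every eigenvalue of $x_j$ on $M$ lies in $\mathbb{I}$, then every eigenvalue of $x_{j+1}$ lies in $\mathbb{I}$. Once this is established, applying the same argument with $j$ replaced by $j+1$ (then $j+2$, etc.) propagates integrality up to $x_n$, and the symmetric version with $\Omega_{j-1}$ in place of $\Omega_j$ propagates the other way down to $x_1$.

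For the adjacent claim, I would fix any weight $\underline{a}$ with $M_{\underline{a}} \neq 0$ and aim to show $a_{j+1} \in \mathbb{I}$. Since $M \in \HngMods$ is semisimple over $\F G^n$, I decompose $M_{\underline{a}} = \bigoplus_{(i_\centerdot)} M^{(i_\centerdot)}_{\underline{a}}$ into $\F G^n$-isotypic components indexed by tuples $(i_1,\ldots,i_n) \in \mathcal{I}$ (corresponding to simples $V_{i_1} \otimes \cdots \otimes V_{i_n}$), and select a summand $M^{(i_\centerdot)}_{\underline{a}} \neq 0$. The commutation relations \eqref{intertwine 2} show that $\Omega_j$ sends $M^{(i_\centerdot)}_{\underline{a}}$ into $M^{(s_j i_\centerdot)}_{s_j\underline{a}}$, while \eqref{intertwine 1} combined with Corollary~\ref{weig-t-1} (and $P_{j,j+1}^2 = \mathrm{id}$) identifies $\Omega_j^2$ on the chosen piece as the scalar
$$
\Delta := c_{i_j}^2 \,\delta_{i_j, i_{j+1}} - (a_j - a_{j+1})^2
$$
plus a nilpotent endomorphism.

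The argument then splits into two cases. If $\Delta \neq 0$, then $\Omega_j^2$ is invertible on $M^{(i_\centerdot)}_{\underline{a}}$, so $\Omega_j$ sends a nonzero vector to a nonzero element of $M_{s_j\underline{a}}$; hence $s_j\underline{a}$ is a weight of $M$, and its $j$-th coordinate $a_{j+1}$ lies in $\mathbb{I}$ by hypothesis. If $\Delta = 0$, then either $i_j \neq i_{j+1}$, forcing $a_{j+1} = a_j \in \mathbb{I}$, or $i_j = i_{j+1}$, forcing $a_{j+1} = a_j \mp c_{i_j}$, which lies in $\mathbb{I}$ thanks to the standing assumption $c_k \in \mathbb{I}$. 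Either way $a_{j+1} \in \mathbb{I}$, which finishes the adjacent claim.

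The main obstacle I anticipate is purely a bookkeeping one: correctly identifying how $t_{j,j+1}^2$ acts as a scalar on each $\F G^n$-isotypic piece (this is where Corollary~\ref{weig-t-1} and the assumption $c_k \in \mathbb{I}$ enter) and confirming that $\Omega_j$ permutes these pieces as $(i_\centerdot) \mapsto (s_j i_\centerdot)$ via relation \eqref{SnGn}. The scalar-plus-nilpotent invertibility, the case analysis, and the final induction are then routine, but this interplay with the Clifford theory for $\F G^n$ is where the argument genuinely diverges from the purely degenerate-affine version in \cite{K2}.
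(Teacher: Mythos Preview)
Your proposal is correct and follows essentially the same route as the paper: both arguments use the intertwining elements $\Omega_j$, the identities \eqref{intertwine 1}--\eqref{intertwine 2}, and Corollary~\ref{weig-t-1} (together with the standing hypothesis $c_k\in\mathbb{I}$) to propagate integrality from $x_j$ to $x_{j\pm 1}$. The only cosmetic differences are that the paper picks an honest simultaneous eigenvector $v$ for $x_k,x_{k+1}$ inside $I_{\widehat n}M$ and splits on whether $\Omega_k v=0$, whereas you work with generalized weight spaces intersected with arbitrary $\mathbb{F}G^n$-isotypic pieces and split on whether the scalar part $\Delta$ of $\Omega_j^2$ vanishes; your bookkeeping is in fact slightly cleaner since it avoids the question of where $\Omega_k v$ lands relative to $I_{\widehat n}M$.
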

\begin{proof} It suffices to show that the eigenvalues of $x_k$
belong to $\mathbb{I}$ if and only if the eigenvalues of $x_{k+1}$
belong to $\mathbb{I}$, for $1\leq k\leq n-1.$ By
Corollary~\ref{weig-t-1}, Lemma~\ref{ealphaM} and
Lemma~\ref{Malpha}, it is enough to focus on the subspaces
$I_{\n}M$ for all $\n=(n_1,\ldots,n_r)\in \Cnr$. Assume that all
eigenvalues of $x_k$ on $I_{\n}M$ belong to $\mathbb{I}$. Let $a$
be an eigenvalue for the action of $x_{k+1}$ on $I_{\n}M$. Since
$x_k$ and $x_{k+1}$ commute, we can pick $v$ lying in the
$a$-eigenspace of $x_{k+1}$ so that $v$ is also an eigenvector of
$x_k$, of eigenvalue $b$. By assumption we have $b\in\mathbb{I}.$
By (\ref{intertwine 2}), we have
$x_{k}\Omega_k=\Omega_{k}x_{k+1}$. So if $\Omega_kv\neq 0$, then
$x_k\Omega_k v=a\Omega_k v$, hence $a$ is an eigenvalue of $x_k$,
and so $a\in\mathbb{I}$. Else, $\Omega_kv=0$, then applying
(\ref{intertwine 1}), we have $(b-a)^2v=t_{k,k+1}^2v$. Since
$I_{\n}M$ is isomorphic to the direct sum of copies $V_1^{\otimes
n_1}\otimes\cdots\otimes V_r^{\otimes n_r}$, by Corollary
\ref{weig-t-1}, $t_k^2v=0$ or $t_k^2v=c_{l_k}^2v$. Thus $a=b$ or
$a=b\pm c_{l_k}$. Similarly, we can show that all eigenvalues of
$x_k$ on $I_{\n}M$ belong to $\mathbb{I}$ if assuming all
eigenvalues of $x_{k+1}$ on $I_{\n}M$ belong to $\mathbb{I}$.
\end{proof}

Set
$$
\Delta=\{\lambda=(\lambda_i)_{i\in\mathbb{I}}|\lambda_i\in\mathbb{Z}_+,
\text{ and only finitely many }\lambda_i \text{ are nonzero}\}.
$$
Let
\begin{eqnarray}   \label{poly g}
g_{\lambda} \equiv g_{\lambda}(x_1) =\prod_{i\in \mathbb I} (x_1
-i)^{\lambda_i}.
\end{eqnarray}
The {\em cyclotomic wreath Hecke algebra} is defined to be the
quotient algebra by the two-sided ideal $J_\lambda$ of
$\mathcal{H}_n(G)$ generated by $g_{\lambda}$:
\begin{eqnarray}  \label{cyclot}
\mathcal{H}_n^{\lambda}(G) =\mathcal{H}_n(G)/J_\lambda, \qquad
\lambda\in\Delta.
\end{eqnarray}

\begin{rem} \label{coincidence between cyclotomics}In
the case $G=\{1\}$, $\mathcal{H}_n^{\lambda}(G)$ coincides with
degenerate cyclotomic Hecke algebra $\mathcal{H}_n^{\lambda}$ (see
\cite[Section 7.3]{K2}).
\end{rem}

\begin{lem}\label{integral2}
Let $M\in\mathcal{H}_n(G)$-{\bf mod}${}^s$. Then $M$ is integral
if and only if $J_{\lambda}M=0$ for some $\lambda\in\Delta$.
\end{lem}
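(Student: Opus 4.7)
The proof is a routine but satisfying application of Lemma~\ref{integral1}. The plan is to prove each direction separately, with the forward direction being trivial and the reverse requiring a small observation about the two-sided ideal $J_\lambda$.

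For the ``if'' direction, suppose $J_\lambda M = 0$ for some $\lambda \in \Delta$. Since $g_\lambda = \prod_{i \in \mathbb{I}}(x_1-i)^{\lambda_i}$ lies in $J_\lambda$, it annihilates $M$, so the minimal polynomial of $x_1$ on $M$ divides $g_\lambda$. Hence every eigenvalue of $x_1$ on $M$ lies in the finite set $\{i \in \mathbb{I} \mid \lambda_i \neq 0\} \subset \mathbb{I}$. Since $M$ is a finite-dimensional module in $\mathcal{H}_n(G)$-\textbf{mod}${}^s$, Lemma~\ref{integral1} immediately upgrades this to the integrality of all $x_k$, proving that $M$ is integral.

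For the ``only if'' direction, suppose $M$ is integral. Since $M$ is finite dimensional, the commuting operator $x_1$ has only finitely many distinct eigenvalues on $M$, all of them lying in $\mathbb{I}$ by hypothesis; call this finite set $S$. Choosing $N$ larger than the dimension of $M$, the polynomial $h(x_1) := \prod_{a \in S}(x_1 - a)^N$ annihilates $M$ via the generalized eigenspace decomposition. Defining $\lambda \in \Delta$ by $\lambda_a = N$ for $a \in S$ and $\lambda_a = 0$ otherwise, we have $h = g_\lambda$, so $g_\lambda M = 0$.

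It remains to upgrade $g_\lambda M = 0$ to $J_\lambda M = 0$, which is where one must be mildly careful since $J_\lambda$ is the two-sided ideal generated by $g_\lambda$. A typical element has the form $\sum_i \alpha_i g_\lambda \beta_i$ with $\alpha_i, \beta_i \in \mathcal{H}_n(G)$, and acting on any $m \in M$ gives
\[
\Bigl(\sum_i \alpha_i g_\lambda \beta_i\Bigr) \cdot m = \sum_i \alpha_i \cdot g_\lambda(\beta_i m) = 0,
\]
since each $\beta_i m \in M$ and $g_\lambda$ annihilates all of $M$. Hence $J_\lambda M = 0$ and the equivalence is established. No step presents a real obstacle; the only point of care is distinguishing between ``$g_\lambda$ annihilates $M$'' and ``the ideal $J_\lambda$ annihilates $M$'', which is immediate once one remembers that $M$ is a genuine module over $\mathcal{H}_n(G)$.
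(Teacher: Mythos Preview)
Your proof is correct and follows essentially the same approach as the paper's: both directions hinge on Lemma~\ref{integral1} and the observation that $g_\lambda$ annihilating $M$ forces the eigenvalues of $x_1$ into $\mathbb{I}$, with the converse coming from the finiteness of the spectrum of $x_1$. Your version is slightly more explicit in two places---choosing $N > \dim M$ rather than invoking the minimal polynomial directly, and spelling out why $g_\lambda M = 0$ implies $J_\lambda M = 0$---but these are expository refinements rather than a different argument.
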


\begin{proof}
If $J_{\lambda}M=0$, then the eigenvalues of $x_1$ on $M$ are all
in $\mathbb{I}$, and hence $M$ is integral in view of Lemma
\ref{integral1}. Conversely, suppose $M$ is integral. Then the
minimal polynomial of $x_1$ on $M$ is of the form $\prod_{i\in
\mathbb I} (t-i)^{\lambda_i}$ for some $\lambda_i\in\mathbb{Z}_+$.
So if we set $J_{\lambda}$ to be the two-sided ideal of
$\mathcal{H}_n(G)$ generated by $\prod_{i\in \mathbb I}
(x_1-i)^{\lambda_i}$, we certainly have that $J_{\lambda}M=0$.
\end{proof}
We shall denote by $\mathcal{H}_n^{\lambda}(G)$-{\bf mod}$^s$ the
full subcategory of $\mathcal{H}_n^{\lambda}(G)$-{\bf mod}
consisting of finite-dimensional $\Hng$-modules which are
semisimple when restricted to $\F G^n$. By inflation along the
canonical homomorphism $\mathcal{H}_n(G)\rightarrow
\mathcal{H}_n^{\lambda}(G)$, we can identify
$\mathcal{H}_n^{\lambda}(G)$-{\bf mod} (resp.
$\mathcal{H}_n^{\lambda}(G)$-{\bf mod}${}^s$) with the full
subcategory of $\mathcal{H}_n(G)$-{\bf mod} (resp. $\HngMods$)
consisting of all modules $M$ with $J_{\lambda}M=0$. By
Lemma~\ref{integral2}, to study modules in the category
$\HngModI$, we may instead study modules in the categories
$\mathcal{H}_n^{\lambda}(G)$-{\bf mod}${}^s$ for all
$\lambda\in\Delta$.

Our subalgebra $\mathbb{F}G_n$ of $\mathcal{H}_n(G)$ plays an
analogous role as the subalgebra $\mathbb{F}S_n$ of
$\mathcal{H}_n$. It turns out that all the lemmas in \cite[Section
7.5]{K2} used to prove the PBW basis theorem for the degenerate
cyclotomic Hecke algebras remain valid if we replace
$\mathbb{F}S_n$ there by $\mathbb{F}G_n$. So we have the
following.

\begin{prop}\label{PBW basis for cyclotomic algebras}
Let $d=\sum_{i\in\mathbb{I}}\lambda_i$. The elements
$$
\{x^{\alpha}\pi g \mid \alpha\in\mathbb{Z}^n_+ \text{ with
}\alpha_1,\ldots,\alpha_n<d, \pi\in S_n, g\in G^n\}
$$
form a basis for $\mathcal{H}_n^{\lambda}(G)$.
\end{prop}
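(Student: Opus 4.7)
The plan is to adapt Kleshchev's proof of the PBW theorem for the degenerate cyclotomic Hecke algebra \cite[Section 7.5]{K2} to the wreath setting, the only substantive change being that the scalar $1$ playing the role of the commutator $[s_i,x_i]$ in $\mathcal{H}_n$ is now the nonscalar element $t_{i,i+1}\in\mathbb{F}G^n$. Starting from the PBW basis $\{x^{\alpha}gw\}$ of $\mathcal{H}_n(G)$ (Theorem \ref{PBW basis}), which may equivalently be written as $\{x^{\alpha}\pi g\}$ via \eqref{SnGn}, the proof splits into spanning and linear independence.

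For spanning, the crux is to show that $g_{\lambda}(x_i)\in J_{\lambda}$ for every $1\leq i\leq n$; granting this, any monomial $x^{\alpha}\pi g$ with some $\alpha_i\geq d$ can be rewritten modulo $J_{\lambda}$ as a sum of terms with strictly smaller $x$-degree (plus lower Bruhat terms, which can be reduced by induction using Lemma \ref{spanned set}). I would prove the key claim by induction on $i$, the case $i=1$ being the definition. Using \eqref{symmetric and polnomial 2} together with $s_i^2=1$ and \eqref{s and t}, one computes
\[
s_i\,g_{\lambda}(x_i)\,s_i \;=\; g_{\lambda}(x_{i+1}) \;+\; R,
\]
where $R$ is a sum of products whose factors come from $\{x_i,x_{i+1},t_{i,i+1},s_i\}$, with strictly smaller total $x$-degree than $g_{\lambda}(x_i)$. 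The symmetry of $g_{\lambda}$ in $(x_i,x_{i+1})$ together with \eqref{comm:xg} and \eqref{t and g} allow one to rewrite $R$ as an element of the two-sided ideal generated by $g_{\lambda}(x_i)$ plus terms of strictly lower degree that are absorbed by the induction hypothesis, so $g_{\lambda}(x_{i+1})\in J_{\lambda}$.

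For linear independence, I would construct an explicit $\mathcal{H}_n^{\lambda}(G)$-module of the claimed dimension $d^n|G|^n n!$ on which the proposed basis acts by linearly independent operators. The natural candidate is the truncation of the PBW module from the proof of Theorem \ref{PBW basis}, namely $V_\lambda:=\mathbb{F}G^n\otimes \bigl(\mathbb{F}[y_1,\ldots,y_n]/(g_{\lambda}(y_1),\ldots,g_{\lambda}(y_n))\bigr)$, with the $\mathcal{H}_n(G)$-action defined by the same formulas as before. The key point is that the ideal $(g_{\lambda}(y_1),\ldots,g_{\lambda}(y_n))$ is preserved by the $s_j$-action thanks to the same reduction used in Step 1 (now transported to the module), so the formulas descend; induction up by the action of $\mathbb{F}S_n$ on the left then yields a module of the required dimension separating the basis elements by testing on vectors of the form $y_1^{N_1}\cdots y_n^{N_n}\otimes h$ with well-chosen $N_i<d$. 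The main obstacle is the bookkeeping in Step 1: because $t_{i,i+1}$ takes values in $\mathbb{F}G^n$ rather than $\mathbb{F}$, one must carefully track its interaction with both the $G^n$-action and symmetric polynomials in $(x_i,x_{i+1})$ using the identities in Proposition \ref{Jucys murphy} and the lemma following \eqref{SnGn}; once $g_{\lambda}(x_i)\in J_{\lambda}$ for all $i$ is secured, the remainder is routine and parallels \cite[Section 7.5]{K2}.
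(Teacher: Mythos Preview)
Your overall plan---adapt the argument of \cite[Section 7.5]{K2} with $\mathbb{F}S_n$ replaced by $\mathbb{F}G_n$---is exactly what the paper does (the paper gives no details beyond asserting that Kleshchev's lemmas go through verbatim with this substitution). So at the level of strategy you are aligned with the paper.

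However, your execution of the spanning step contains a genuine error. The claim that $g_{\lambda}(x_i)\in J_{\lambda}$ for all $i$ is \emph{false}. Take $G=\{1\}$, $n=2$, and $\lambda=\Lambda_0$, so $g_{\lambda}(x_1)=x_1$ and $\mathcal{H}_2^{\Lambda_0}\cong\mathbb{F}S_2$. From $s_1x_1=x_2s_1-1$ one gets $x_2=s_1x_1s_1+s_1$; since $s_1\neq 0$ in the quotient, $x_2\notin J_{\Lambda_0}$. Thus $g_{\lambda}(x_2)\notin J_{\lambda}$ already in the classical case, and no amount of bookkeeping with $t_{i,i+1}$ will repair this. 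Your remark about ``the symmetry of $g_{\lambda}$ in $(x_i,x_{i+1})$'' is also off: $g_{\lambda}(x_i)$ is a polynomial in $x_i$ alone. The sentence about $R$ being absorbed ``by the induction hypothesis'' conflates an induction on the index $i$ with an induction on $x$-degree; neither closes the argument.

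What actually works (and is what Kleshchev does) is a \emph{filtered} version of your idea: filter $\mathcal{H}_n(G)$ by total $x$-degree, so that $\text{gr}\,\mathcal{H}_n(G)\cong P_n\rtimes\mathbb{F}G_n$. In the associated graded algebra the relation \eqref{symmetric and polnomial 2} becomes $s_ix_i=x_{i+1}s_i$, so conjugation by $S_n$ genuinely permutes the $x_i$ and the two-sided ideal generated by $g_{\lambda}(x_1)$ \emph{does} contain every $g_{\lambda}(x_i)$. Hence $\text{gr}(\mathcal{H}_n(G)/J_{\lambda})$ is a quotient of $\bigl(P_n/(g_{\lambda}(x_1),\ldots,g_{\lambda}(x_n))\bigr)\rtimes\mathbb{F}G_n$, giving $\dim\mathcal{H}_n^{\lambda}(G)\le d^n\,n!\,|G|^n$ and thus spanning. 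Your module construction for the lower bound is along the right lines and, once the spanning side is fixed, the rest is indeed routine and parallel to \cite[Section 7.5]{K2}.
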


\begin{rem} \label{minimal quotient}
For nonzero $\lambda$, $\F G_n$ is a subalgebra of
$\mathcal{H}_n^{\lambda}(G)$. In particular, for
$\Lambda_0=(\lambda_i)_{i\in\mathbb{I}}$ with $\lambda_0=1$ and
$\lambda_i=0$ for $i\neq 0$, we have
$\mathcal{H}_n^{\Lambda_0}(G)\cong\mathbb{F}G_n$.
\end{rem}
\begin{cor}
The subalgebra of $\mathcal{H}_{n}^{\lambda}(G)$ generated by
$x_1,\ldots,x_{n-1}, \pi\in S_{n-1}, g\in G^{n}$ is isomorphic to
$\mathcal{H}_{n-1}^{\lambda}(G) \times G$.
\end{cor}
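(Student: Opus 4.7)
The plan is to construct an explicit algebra isomorphism
$$
\bar\phi \colon \mathcal{H}_{n-1}^{\lambda}(G) \otimes_{\mathbb{F}} \mathbb{F}G \longrightarrow B,
$$
where $B$ denotes the subalgebra of $\mathcal{H}_{n}^{\lambda}(G)$ in question, and then to verify bijectivity using the PBW basis of Proposition~\ref{PBW basis for cyclotomic algebras}. I interpret ``$\times$'' in the statement as the tensor product of $\mathbb{F}$-algebras.

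First I would identify the $n$-th factor subgroup $G^{(n)} \subset G^n \subset \mathcal{H}_n^{\lambda}(G)$ and show that it commutes with the subalgebra generated by $G^{n-1}$, $x_{1},\ldots,x_{n-1}$, and $s_{1},\ldots,s_{n-2}$. Commutativity with the $x_{i}$ is immediate from \eqref{comm:xg}; commutativity with $G^{n-1}$ follows because distinct factors of $G^{n}$ commute inside $G^{n}$; and commutativity with $s_{j}$ for $j\leq n-2$ follows from \eqref{SnGn}, since $s_{j}$ fixes the $n$-th coordinate so ${}^{s_{j}}g = g$ for $g\in G^{(n)}$. Together with the natural identification of the subalgebra generated by $G^{n-1}, x_1,\dots,x_{n-1}, s_1,\dots,s_{n-2}$ as a quotient of $\mathcal{H}_{n-1}(G)$, this produces a well-defined algebra homomorphism $\phi\colon\mathcal{H}_{n-1}(G) \otimes \mathbb{F}G \to \mathcal{H}_n^{\lambda}(G)$. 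Since the cyclotomic relation $g_{\lambda}(x_1)=0$ holds in the target and $x_1$ lies in the image of the first tensor factor, $\phi$ factors through $\mathcal{H}_{n-1}^{\lambda}(G)\otimes\mathbb{F}G$ to give $\bar\phi$.

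Surjectivity onto $B$ is essentially by construction: the image of $\bar\phi$ contains $x_{1},\ldots,x_{n-1}$, $S_{n-1}$, $G^{n-1}$, and $G^{(n)}$, and the two subgroups $G^{n-1}$ and $G^{(n)}$ jointly generate $G^{n}$. So $\operatorname{Im}(\bar\phi)=B$.

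The main obstacle, and final step, is injectivity, which I would settle via the PBW basis. Applying Proposition~\ref{PBW basis for cyclotomic algebras} to $\mathcal{H}_{n-1}^{\lambda}(G)$, a basis of $\mathcal{H}_{n-1}^{\lambda}(G)\otimes\mathbb{F}G$ is
$$
\{\, x^{\alpha}\pi g \otimes h \mid \alpha\in\mathbb{Z}_{+}^{n-1},\ \alpha_{i}<d,\ \pi\in S_{n-1},\ g\in G^{n-1},\ h\in G\,\}.
$$
Under $\bar\phi$ this maps to
$$
\{\, x^{\alpha'}\pi g' \mid \alpha'\in\mathbb{Z}_{+}^{n},\ \alpha'_{i}<d\ \text{for } i<n,\ \alpha'_{n}=0,\ \pi\in S_{n-1}\subset S_{n},\ g'\in G^{n}\,\},
$$
where $\alpha' = (\alpha,0)$ and $g' = (g,h)\in G^{n-1}\times G^{(n)}=G^{n}$. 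This is a subset of the PBW basis of $\mathcal{H}_{n}^{\lambda}(G)$ in Proposition~\ref{PBW basis for cyclotomic algebras}, hence linearly independent, so $\bar\phi$ is injective. The only real work is the bookkeeping identification of these two index sets; the rest is direct use of the defining relations.
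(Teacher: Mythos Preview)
Your proof is correct and follows precisely the approach the paper intends: the corollary is stated without proof immediately after Proposition~\ref{PBW basis for cyclotomic algebras}, and your argument---constructing the obvious homomorphism and checking injectivity by matching PBW bases---is exactly the routine verification implicit in calling it a corollary. The paper offers no alternative argument to compare against.
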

\subsection{The functors $e_{i,\chi^k}^{\lambda}$ and
$f_{i,\chi^k}^{\lambda}$.} In view of (\ref{blocks}), we have the
following decomposition in $\mathcal{H}_n(G)$-{\bf
mod}$_{\mathbb{I}}$:
$$
M=\bigoplus_{\n \in \Cnr,\gamma\in\mathbb{I}^n/\sim} M[\n,\gamma].
$$

Set $\Gamma_n$ to be the set of non-negative integral linear
combinations $\gamma=\sum_{i\in\mathbb{I}}\gamma_i\varepsilon_i$
of the standard basis $\varepsilon_i$ of
$\mathbb{Z}^{|\mathbb{I}|}$ such that
$\sum_{i\in\mathbb{I}}\gamma_i=n$. If
$\underline{a}\in\mathbb{I}^n$, define its content to be
$$
\text{cont}(\underline{a})=\sum_{i\in\mathbb{I}}
\gamma_i\varepsilon_i \in \Gamma_n, \quad \text{where }\gamma_i=\#
\{j=1,\ldots,n|a_j=i\}.
$$
The content function induces a canonical bijection between
$\mathbb{I}^n/\sim$ and $\Gamma_n$, and we will identify the two
sets. Now the above decomposition in $\mathcal{H}_n(G)$-{\bf
mod}$_{\mathbb{I}}$ can be written as
\begin{equation} \label{eq:decomp}
M=\bigoplus_{\n \in \Cnr,\gamma\in\Gamma_n}M[\n,\gamma].
\end{equation}
Such a decomposition also makes sense in the category
$\mathcal{H}_n^{\lambda}(G)$-{\bf mod}${}^s$.

\begin{defn}\label{crystal operators defn}
Suppose that $M\in\mathcal{H}_n^{\lambda}(G)$-{\bf mod}${}^s$ and
that $M=M[\n,\gamma]$ for some $\n \in \Cnr$ and
$\gamma\in\Gamma_n$. We define (see \eqref{composition} for
notations)
\begin{align*}
e_{i,\chi^k}^{\lambda}M &= \left \{
\begin{array}{ll}
\text{Hom}_G(V_k, \text{res}_{\mathcal{H}_{n-1}^{\lambda}(G)
\times G}M)[\n_k^-,\gamma-\varepsilon_{ic_k}], & \text{ if
 } c_{k}\neq 0
 \\
\text{Hom}_G(V_k, \text{res}_{\mathcal{H}_{n-1}^{\lambda}(G)
\times G}M)[\n_k^-,\gamma-\varepsilon_{i}],& \text{ if
 } c_{k}=0,
 \end{array}
 \right.
 \\
 f_{i,\chi^k}^{\lambda}M &= \left \{
 \begin{array}{ll}
(\text{ind}^{\mathcal{H}_{n+1}^{\lambda}(G)}_{\mathcal{H}_n^{\lambda}(G)
\times G}(M\otimes V_k)) [\n_k^+,\gamma+\varepsilon_{ic_k}], &
\text{ if
 } c_{k}\neq 0
  \\
(\text{ind}^{\mathcal{H}_{n+1}^{\lambda}(G)}_{\mathcal{H}_n^{\lambda}(G)
\times G}(M\otimes V_k)) [\n_k^+,\gamma+\varepsilon_{i}], & \text{
if
 } c_{k}=0.
  \end{array}
 \right.
\end{align*}

We extend $e_{i,\chi^k}^{\lambda}$ (resp.
$f_{i,\chi^k}^{\lambda}$) to functors from
$\mathcal{H}_n^{\lambda}(G)$-{\bf mod}${}^s$ to
$\mathcal{H}_{n-1}^{\lambda}(G)$-{\bf mod}${}^s$(resp. from
$\mathcal{H}_n^{\lambda}(G)$-{\bf mod}${}^s$ to
$\mathcal{H}_{n+1}^{\lambda}(G)$-{\bf mod}${}^s$) by the direct
sum decomposition~\eqref{eq:decomp}.
\end{defn}

\begin{rem}
If $G=\{1\}$ is the trivial group, the functors
$e_{i,\chi^k}^{\lambda}$ and $f_{i,\chi^k}^{\lambda}$ (with the
index $\chi^k$ dropped) coincide with the ones $e_i^{\lambda}$ and
$f_i^{\lambda}$ defined in \cite[Section 8.1]{K1}.
\end{rem}

\subsection{An equivalence of categories.}
Let $S_{n-1}^{\prime}$ be the subgroup of $S_n$ generated by
$s_2,\ldots,s_{n-1}$. The following lemma follows from
\cite[Proposition A.3.2]{Ze} which describes the double cosets
$S_{n-1}^{\prime}\setminus S_n /S_{\widehat{n}}$. For each
$\widehat{n}=(n_1,\ldots,n_r)\in\mathcal{C}_n^r$ and $1\leq k\leq
r$, set
$$
\widehat{n}_{1\cdot\cdot k}=n_1+\cdots+n_k.
$$

\begin{lem}\label{left cosets representatives}
Retain the above notations. Then there exists a complete set
$\Theta(\widehat{n})$ of representatives of left cosets of
$S_{\widehat{n}}$ in $S_n$ such that any $w\in\Theta(\widehat{n})$
is of the form $\sigma (1, \widehat{n}_{1\cdot\cdot k}+1)$ for
some $\sigma\in S_{n-1}^{\prime}$ and $0\leq k\leq r-1$. (It is
understood that $(1,\widehat{n}_{1\cdot\cdot k}+1)=1$ when $k=0$.)
\end{lem}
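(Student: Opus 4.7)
The plan is to reduce the lemma to the double coset decomposition
\begin{equation*}
S_n = \bigcup_{k=0}^{r-1} S_{n-1}^{\prime} \, \tau_k \, S_{\widehat{n}}, \qquad \tau_k := (1, \widehat{n}_{1\cdot\cdot k}+1)
\end{equation*}
(with the convention $\tau_0 = 1$). Once this is established, any $w \in S_n$ admits a decomposition $w = \sigma \tau_k h$ with $\sigma \in S_{n-1}^{\prime}$ and $h \in S_{\widehat{n}}$, and consequently $w S_{\widehat{n}} = \sigma \tau_k S_{\widehat{n}}$, so that $\sigma \tau_k$ serves as a representative of $w S_{\widehat{n}}$ of the required form. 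Choosing one such representative for each left coset then yields the set $\Theta(\widehat{n})$.

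The decomposition can be verified directly. Given $w \in S_n$, let $j := w^{-1}(1)$ and let $k \in \{0, 1, \ldots, r-1\}$ be the unique index with $\widehat{n}_{1\cdot\cdot k}+1 \le j \le \widehat{n}_{1\cdot\cdot k+1}$, i.e.\ $j$ lies in the $(k+1)$-th block. Because the factor $S_{n_{k+1}} \subseteq S_{\widehat{n}}$ acts transitively on this block, I can choose $h \in S_{\widehat{n}}$ with $h(\widehat{n}_{1\cdot\cdot k}+1) = j$, so that $(wh)(\widehat{n}_{1\cdot\cdot k}+1) = 1$. Since $\tau_k$ is an involution swapping $1$ and $\widehat{n}_{1\cdot\cdot k}+1$, the product $\sigma := wh\tau_k$ fixes $1$ and therefore lies in $S_{n-1}^{\prime}$. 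Rearranging gives $w = \sigma \tau_k h^{-1} \in S_{n-1}^{\prime} \tau_k S_{\widehat{n}}$, as needed. Equivalently, one can identify $S_n / S_{\widehat{n}}$ with the set of maps $f : \{1,\ldots,n\} \to \{1,\ldots,r\}$ having fiber sizes $(n_1,\ldots,n_r)$, on which $S_{n-1}^{\prime}$ acts by precomposition and therefore preserves $f(1)$; the $r$ possible values of $f(1)$ index the $r$ double cosets and the $\tau_k$ are explicit representatives, recovering \cite[Prop.~A.3.2]{Ze}.

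There is no substantive obstacle: the only mild bookkeeping is ensuring that $\Theta(\widehat{n})$ is genuinely non-redundant, which is accomplished by selecting one $\sigma$ per coset of $S_{n-1}^{\prime} \cap \tau_k S_{\widehat{n}} \tau_k^{-1}$ in $S_{n-1}^{\prime}$ inside each double coset. Because distinct double cosets are automatically disjoint in $S_n$, no interaction between different values of $k$ needs to be controlled, and the existence part of the lemma (which is all that is actually claimed) follows from the surjection established above.
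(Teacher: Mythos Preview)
Your proposal is correct and follows essentially the same route as the paper, which simply invokes \cite[Proposition~A.3.2]{Ze} for the double coset decomposition $S_{n-1}^{\prime}\backslash S_n / S_{\widehat{n}}$ without further detail. You have in fact supplied a direct verification of that decomposition, so your argument is a strict elaboration of the paper's one-line citation.
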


Note that $(1,m+1) =s_{m}\cdots s_2s_1s_2\cdots s_{m}$. The next
lemma follows from (\ref{symmetric and polnomial 2}) and the
identity $t_{i,j}s_j=s_jt_{i,j+1}$ for $1\leq i<j\leq n-1$ in
$\mathcal{H}_n(G).$

\begin{lem} \label{relation between t and s}
The following equation holds in $\mathcal{H}_n(G)$ for $0\leq
k\leq r-1$:
\begin{align}
x_1(1,\widehat{n}_{1\cdot\cdot k}+1)
&=(1,\widehat{n}_{1\cdot\cdot k}+1) x_{\widehat{n}_{1\cdot\cdot k}+1}\notag\\
&-\sum_{l=1}^{\widehat{n}_{1\cdot\cdot
k}}s_{\widehat{n}_{1\cdot\cdot k}} \cdots
s_2s_1s_2\cdots\widehat{s_l}\cdots s_{\widehat{n}_{1\cdot\cdot
k}}t_{l,\widehat{n}_{1\cdot\cdot k}+1}.\notag
\end{align}
\end{lem}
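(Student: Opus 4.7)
\emph{Plan.} Set $m = \widehat{n}_{1\cdot\cdot k}$. For $k=0$ we have $m = 0$ and $(1, m+1) = 1$; the claimed identity then reduces to $x_1 = x_1$ with empty sum, so the statement is trivial. For $k \geq 1$ I will prove the identity by induction on $m$, using the recursive presentation $(1, m+1) = s_m (1, m) s_m$.

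The base case $m=1$ is precisely the relation $x_1 s_1 = s_1 x_2 - t_{12}$ from \eqref{symmetric and polnomial 2}: the sum has a single term $l=1$, whose $s$-word reduces to the empty product, and $t_{1,2}$ is exactly $t_{1,m+1}$. For the inductive step, note that since $m \ge 2$ the generator $x_1$ commutes with $s_m$, so $x_1 (1, m+1) = s_m \bigl(x_1 (1, m)\bigr) s_m$. Apply the induction hypothesis to $x_1 (1,m)$ and then conjugate each resulting term by $s_m$. Two ingredients (precisely the two identities quoted in the lemma's hint) handle the simplification. First, the non-commutation $x_m s_m = s_m x_{m+1} - t_{m, m+1}$ turns the leading piece $s_m (1,m)\, x_m\, s_m$ into $(1,m+1)\, x_{m+1}$ together with one new correction $-\, s_m (1, m)\, t_{m, m+1}$, which supplies the $l=m$ summand. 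Second, the identity $t_{l,m} s_m = s_m t_{l, m+1}$ (for $1 \le l \le m-1$) converts each $t_{l,m}$-correction arising from the inductive hypothesis into a $t_{l,m+1}$-correction appropriate to the $m$-step, with the surrounding symmetric group factors combining into $s_m s_{m-1} \cdots s_2 s_1 s_2 \cdots \widehat{s_l} \cdots s_{m-1} s_m$, i.e.\ the $l$-th word in the target formula for $m$.

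The main obstacle is purely notational bookkeeping: one must verify that the convention $\widehat{s_l}$ (meaning the copy of $s_l$ in the right block $s_2 s_3 \cdots s_m$ of $s_m \cdots s_2 s_1 s_2 \cdots s_m$) is preserved under the passage $m-1 \rightsquigarrow m$, and that the newly produced $l=m$ summand indeed corresponds to deleting the rightmost $s_m$, yielding the word $s_m s_{m-1} \cdots s_2 s_1 s_2 \cdots s_{m-1}$. Once these identifications are fixed, all remaining manipulations are immediate applications of \eqref{symmetric and polnomial 2} and of the $t$-versus-$s$ commutation relation, and the induction closes.
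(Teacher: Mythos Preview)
Your proof is correct and follows essentially the same approach as the paper. The paper's own proof consists only of the sentence ``follows from \eqref{symmetric and polnomial 2} and the identity $t_{i,j}s_j=s_jt_{i,j+1}$ for $1\leq i<j\leq n-1$,'' and your inductive argument is precisely a careful fleshing-out of that hint, using the recursion $(1,m+1)=s_m(1,m)s_m$ together with exactly those two relations (plus the commutation $x_1 s_m = s_m x_1$ for $m\ge 2$).
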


Now assume that $p>0$. Let $\{\alpha_i|i\in\mathbb{I}\}$ be the
simple roots of the complex affine Lie algebra $\widehat{sl}_p$
and $\{h_i|i\in\mathbb{I}\}$ be the corresponding simple coroots.
Let $P_+$ be the set of all dominant integral weights. Recall in
\cite[Section 8.1]{K2} for each $\mu\in P_+$, the degenerate
cyclotomic Hecke algebra is
$$
\mathcal{H}_n^{\mu}=\mathcal{H}_n/ \langle
\prod_{i\in\mathbb{I}}(y_1-i)^{\langle h_i,\mu\rangle} \rangle.
$$

For $\lambda\in \Delta$ and $1\leq k\leq r$, define $\lambda[k]\in
P_+$ by letting
$$
\langle h_i,\lambda[k]\rangle =\lambda_{ic_k}, \quad \forall
i\in\mathbb{I}.
$$
Further denote the algebra
\begin{eqnarray} \label{Anrlam}
A_{n,r}^\lambda
=\bigoplus_{\widehat{n}=(n_1,\ldots,n_r)\in\mathcal{C}_n^r}
\mathcal{H}_{n_1}^{\lambda[1]}
\otimes\cdots\otimes\mathcal{H}_{n_r}^{\lambda[r]}.
\end{eqnarray}

\begin{thm} \label{morita cyclotomic}
Assume that $p> 0$ and $p$ does not divide $|G|.$ Then the functor
$\mathcal{F}$ in Theorem \ref{morita equivalence} induces a
category equivalence $\mathcal{F}^{\lambda}:
\mathcal{H}_n^{\lambda}(G)\text{-{\bf
 mod}} \longrightarrow A_{n,r}^\lambda$-{\bf
mod}.
\end{thm}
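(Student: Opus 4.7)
The plan is to show that the functor $\mathcal{F}$ and its inverse $\mathcal{G}$ from Theorem \ref{morita equivalence} restrict to inverse equivalences between the cyclotomic subcategories. Under the hypothesis $p \nmid |G|$, Maschke's theorem yields $\mathcal{H}_n(G)$-{\bf mod}${}^s = \mathcal{H}_n(G)$-{\bf mod}, and by the Remark following Lemma \ref{weig-t-0} every $c_k = |G|/d_k$ lies in $\mathbb{I}\setminus\{0\}$, so that $A_{n,r}$ is a direct sum of tensor products of ordinary degenerate affine Hecke algebras. Since on each side the cyclotomic category is the full subcategory of modules annihilated by a specific two-sided ideal, it suffices to verify the two inclusions: (i) $\mathcal{G}$ sends $A_{n,r}^\lambda$-{\bf mod} into $\mathcal{H}_n^\lambda(G)$-{\bf mod}, and (ii) $\mathcal{F}$ sends $\mathcal{H}_n^\lambda(G)$-{\bf mod} into $A_{n,r}^\lambda$-{\bf mod}.

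The heart of the argument, which handles both inclusions at once, is the following \emph{no-correction identity}: for any $\widehat{n} \in \Cnr$, any $A_{\widehat{n},r}$-module $U$, any $v \otimes z \in V(\widehat{n}) \otimes U$, and any coset representative $\tau \in \Theta(\widehat{n})$,
\[
x_1 \cdot \bigl(\tau \otimes (v \otimes z)\bigr)
\;=\; \tau \otimes \bigl( x_{\tau^{-1}(1)}(v \otimes z) \bigr)
\]
inside $\text{ind}_{\mathcal{H}_{\widehat{n}}(G)}^{\mathcal{H}_n(G)}(V(\widehat{n}) \otimes U)$. To prove this, I would write $\tau = \sigma \cdot (1, \widehat{n}_{1\cdots k}+1)$ using Lemma \ref{left cosets representatives}; since $\sigma \in S'_{n-1}$ commutes with $x_1$ by \eqref{Sn x}, applying Lemma \ref{relation between t and s} reduces the task to showing that each correction term $\sigma(\cdots)\,t_{l,m+1} \otimes (v \otimes z)$, with $m = \widehat{n}_{1\cdots k}$ and $1 \le l \le m$, vanishes in the induced module. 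Using $t_{l,m+1} \in \F G^n \subseteq \mathcal{H}_{\widehat{n}}(G)$, this term equals $\sigma(\cdots) \otimes (t_{l,m+1}(v \otimes z))$; because $l$ lies in a block of index at most $k$ while $m+1$ lies in a block of index strictly greater than $k$, Corollary \ref{weig-t-1} gives $t_{l,m+1}\cdot V(\widehat{n}) = 0$, and hence $t_{l,m+1}(v \otimes z) = 0$.

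Granted the identity, for any polynomial $g$ the operator $g(x_1)$ annihilates $\tau \otimes (V(\widehat{n}) \otimes U)$ if and only if $g(x_{m+1})$ annihilates $V(\widehat{n}) \otimes U$. By Proposition \ref{group tensor affine 3}, $x_{m+1}$ acts on $V(\widehat{n}) \otimes U$ as $c_{k'} \cdot y_1^{(k')}$, where $k'$ is the block containing position $m+1$ and $y_1^{(k')}$ is the first polynomial generator of the $k'$-th tensor factor $\mathcal{H}_{n_{k'}}$ of $A_{\widehat{n},r}$. Since $c_{k'}$ is invertible in $\mathbb{I}$, the substitution $i \mapsto i c_{k'}$ is a bijection of $\mathbb{I}$, giving
\[
g_\lambda(c_{k'}\, y) \;=\; c_{k'}^{|\lambda|} \prod_{i \in \mathbb{I}} (y - i)^{\lambda_{i c_{k'}}},
\]
which annihilates $U$ exactly when the defining cyclotomic relation of $\mathcal{H}_{n_{k'}}^{\lambda[k']}$ (with exponents $\langle h_i, \lambda[k']\rangle = \lambda_{i c_{k'}}$) holds on $U$. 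As $\tau$ varies over $\Theta(\widehat{n})$, the values $\tau^{-1}(1) = m+1$ sweep out the first position of every nonempty block, so the biconditional ``$g_\lambda(x_1) \cdot \mathcal{G}(U) = 0$ if and only if $U$ satisfies the cyclotomic relation of every factor $\mathcal{H}_{n_{k'}}^{\lambda[k']}$ with $n_{k'} > 0$'' holds, yielding (i) and (ii) simultaneously. The main obstacle is establishing the vanishing of the correction terms in the no-correction identity; everything else is bookkeeping between the PBW basis and the cyclotomic relations on the two sides.
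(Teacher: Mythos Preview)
Your proof is correct and follows essentially the same route as the paper's own argument: both write coset representatives via Lemma~\ref{left cosets representatives} as $\sigma(1,\widehat{n}_{1\cdots k}+1)$, use Lemma~\ref{relation between t and s} together with the vanishing $t_{l,m+1}\vert_{V(\widehat{n})}=0$ from Corollary~\ref{weig-t-1} to push $g_\lambda(x_1)$ across the coset representative without correction, and then rescale by the unit $c_{k'}\in\mathbb{I}$ (using $\frac{1}{c_{k'}}\mathbb{I}=\mathbb{I}$ for $p>0$) to match the cyclotomic relations defining $A_{n,r}^\lambda$. Your ``no-correction identity'' is exactly the paper's key computation, just packaged as a standalone statement.
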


\begin{proof}
Recall the definition of $g_\lambda$ and $J_\lambda$, see
\eqref{cyclot}. The category $\mathcal{H}_n^{\lambda}(G)$-{\bf
mod} can be identified with the full subcategory of
$\mathcal{H}_n(G)$-{\bf mod} consisting of all modules $M$ with
$J_{\lambda}M=0$. In view of Lemma \ref{Malpha}, $J_{\lambda}M=0$
if and only if $J_{\lambda}M_{\widehat{n}}=0$ for each
$\widehat{n}\in\mathcal{C}_n^r$. By Lemma \ref{ealphaM} and
Proposition \ref{group tensor affine 4}, we have
$$
M_{\widehat{n}}=\text{ind}_{\mathcal{H}_{\widehat{n}}(G)}^{\mathcal{H}_n(G)}
I_{\widehat{n}}M, \qquad I_{\widehat{n}}M\cong
V(\widehat{n})\otimes_{\mathbb{F}}\text{Hom}_{\mathbb{F}G^n}(V(\widehat{n}),
I_{\widehat{n}}M).
$$
As vector spaces, we have
$$
M_{\widehat{n}}=\bigoplus_{w\in\Theta(\widehat{n})}w\otimes
I_{\widehat{n}}M.
$$
By Lemma \ref{left cosets representatives}, for each
$w\in\Theta(\widehat{n})$, there exists $\sigma\in
S_{n-1}^{\prime}$ such that $w=\sigma (1,\widehat{n}_{1\cdot\cdot
k}+1)$ for some $0\leq k\leq r-1$. So
$g_{\lambda}w=g_{\lambda}\sigma (1,\widehat{n}_{1\cdot\cdot
k}+1)=\sigma g_{\lambda}(1,\widehat{n}_{1\cdot\cdot k}+1).$ Note
that $t_{l, \widehat{n}_{1\cdot\cdot k}+1}=0$ on $
I_{\widehat{n}}M$ for $1\leq l\leq \widehat{n}_{1\cdot\cdot k}$,
so
$$
x_1(1,\widehat{n}_{1\cdot\cdot k}+1)\otimes
z=(1,\widehat{n}_{1\cdot\cdot k}+1)\otimes
x_{\widehat{n}_{1\cdot\cdot k}+1}z
$$
for $z\in I_{\widehat{n}}M$ by Lemma~\ref{relation between t and
s}, and thus $g_{\lambda}w\otimes z=\sigma
(1,\widehat{n}_{1\cdot\cdot k}+1)\otimes g_{\lambda,k}z$, where
$$g_{\lambda,k}:=\prod_{i\in\mathbb{I}}(x_{\widehat{n}_{1\cdot\cdot k}+1}-i)^{\lambda_i}.$$
Therefore $g_{\lambda}M_{\widehat{n}}=0$ if and only if
$g_{\lambda,k}I_{\widehat{n}}M=0$ for $0\leq k\leq r-1$. By
Propositions~\ref{group tensor affine 3} and \ref{group tensor
affine 4}, $g_{\lambda,k}$ acts as zero on $I_{\widehat{n}}M$ if
and only if $\prod_{i\in\mathbb{I}}(c_ky_{\widehat{n}_{1\cdot\cdot
k}+1}-i)^{\lambda_i}$ acts as zero on
$\text{Hom}_{\mathbb{F}G^n}(V(\widehat{n}), I_{\widehat{n}}M)$,
that is, $\prod_{i\in\mathbb{I}}(y_{\widehat{n}_{1\cdot\cdot
k}+1}-i)^{\langle h_i,\lambda[k]\rangle}$ acts on zero on
$\text{Hom}_{\mathbb{F}G^n}(V(\widehat{n}), I_{\widehat{n}}M)$
since $\frac{1}{c_k}\mathbb{I}=\mathbb{I}$ if $p>0$. Therefore
$g_{\lambda}M=0 $ if and only if
$\text{Hom}_{\mathbb{F}G^n}(V(\widehat{n}), I_{\widehat{n}}M)\in
A_{n,r}^\lambda\text{-{\bf mod}} $ for each $
\widehat{n}\in\mathcal{C}_n^r$ as desired.
\end{proof}

\begin{rem} \label{block}
The blocks of the degenerate cyclotomic Hecke algebras are
classified by the $S_n$-orbits of the $n$-tuple eigenvalues of
$x_1,\ldots, x_n$ \cite{Br}. By the Morita equivalence in
Theorem~\ref{morita cyclotomic}, \eqref{eq:decomp} provides us a
block decomposition in $\mathcal{H}_n^{\lambda}(G)$-{\bf mod} when
$p$ does not divide $|G|$.
\end{rem}

\begin{rem} \label{rem:general p}
The assumption that $p$ does not divide $|G|$ in
Theorem~\ref{morita cyclotomic} is imposed merely for avoiding
complicated notations. We can drop it and also the assumption that
$c_k(1\leq k\leq r)$ are integral (compare Theorem~\ref{morita
equivalence}) with the same proof, if we replace
$\mathcal{H}_n^{\lambda}$-{\bf mod} by
$\mathcal{H}_n^{\lambda}$-{\bf mod}${}^s$ and modify suitably the
definition  \eqref{Anrlam} of the algebra $A_{n,r}^\lambda$. The
modified algebra $A_{n,r}^\lambda$ might admit {\em non-integral}
degenerate cyclotomic Hecke algebras as its tensor factors, since
now possibly $c_k \not \in \mathbb I$ (non-integral simple modules
and modular branching rules of degenerate affine or cyclotomic
Hecke algebras can be reduced to integral cases, cf. \cite[Section
7.1]{K2}). In addition, some quotient algebras of $\Hno$
(corresponding to the cases when $c_k =0$) will appear as tensor
factors of $A_{n,r}^\lambda$.

The remaining case when $p=0$ can also be handled similarly with
somewhat more involved notations, see  Remark~\ref{rem:p=0} below.
\end{rem}

\subsection{A crystal graph interpretation.}

For this subsection, we shall impose the stronger assumption that
{\em $p$ does not divide $|G|.$}

We assume in addition that $p>0$ except that in
Remark~\ref{rem:p=0} below we deal with the remaining case for
$p=0$.

Denote by $K(\mathcal{A})$ the Grothendieck group of a module
category $\mathcal A$ and by $\text{Irr}(\mathcal{A})$ the set of
pairwise non-isomorphic simple objects in $\mathcal A$. For
$\mu\in P_+$, let
$$
K(\mu)=\bigoplus_{n\geq 0}K\left(\mathcal{H}_n^{\mu}\text{-{\bf
mod}}\right), \qquad
K(\mu)_{\mathbb{C}}=\mathbb{C}\otimes_{\mathbb{Z}}K(\mu).
$$
Besides the functors $e^{\mu}_i$ and $f^{\mu}_i$ (cf. Remark
\ref{coincidence between cyclotomics}), we recall two additional
operators $\tilde{e}_i^{\mu}$ and $\tilde{f}_i^{\mu}$ on
$\coprod_{n\geq 0}\text{Irr}(\mathcal{H}_n^{\mu}\text{-{\bf
mod}})$ by letting $\tilde{e}_i^{\mu}L=\text{soc}(e^{\mu}_iL)$ and
$\tilde{f}_i^{\mu}L=\text{head}(f^{\mu}_iL)$ for each simple
$\mathcal{H}_n^{\mu}$-module $L$, cf. \cite[Section 8.2]{K2}.

Denote by $L(\mu)$ the irreducible highest weight
$\widehat{sl}_p$-module of highest weight $\mu\in P_+$. The
following is a degenerate counterpart of \cite{LLT, Ar1, Gro}.

\begin{prop}\cite[Theorem 9.5.1]{K2}\label{identification 1}
Let $\mu\in P_+$. Then $K(\mu)_{\mathbb{C}}$ is an
$\widehat{sl}_p$-module with the Chevalley generators acting as
$e^{\mu}_i$ and $f^{\mu}_i$ $(i\in \mathbb I)$;
as $\widehat{sl}_p$-modules, $K(\mu)_{\mathbb{C}} \cong L(\mu).$

Moreover, $\coprod_{n\geq
0}\text{Irr}(\mathcal{H}_n^{\mu}\text{-{\bf mod}})$ is isomorphic
to the crystal basis $B(\mu)$ of $U_q(\widehat{sl}_p)$-module
$L(\mu)$ with operators $\tilde{e}_i^{\mu}$ and
$\tilde{f}_i^{\mu}$ identified as Kashiwara operators.
\end{prop}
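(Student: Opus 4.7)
The statement is cited as \cite[Theorem 9.5.1]{K2}, the degenerate version of the Lascoux--Leclerc--Thibon/Ariki/Grojnowski theorem. My plan is to follow the standard Grojnowski--Kleshchev strategy, which proceeds in three stages.

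First, I would verify that $e_i^\mu$ and $f_i^\mu$ satisfy the defining relations of $\widehat{\mathfrak{sl}}_p$ on $K(\mu)_{\mathbb{C}}$. The point is that the restriction and induction functors between $\mathcal{H}_{n-1}^\mu$ and $\mathcal{H}_n^\mu$ decompose, via the generalized eigenspaces of $y_n$, into the summands $e_i^\mu$ and $f_i^\mu$. A Mackey-style isomorphism of the shape $\mathrm{res}\,\mathrm{ind}\cong \mathrm{ind}\,\mathrm{res}\oplus \mathrm{id}$, refined eigenvalue by eigenvalue using the intertwining operator $\sigma_i=s_i(y_i-y_{i+1})+1$, yields the Kac--Moody commutator $[e_i,f_j]=\delta_{ij}h_i$ together with the Chevalley--Serre relations on the Grothendieck group level.

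Second, I would identify $K(\mu)_{\mathbb{C}}$ with $L(\mu)$ as an $\widehat{\mathfrak{sl}}_p$-module. The trivial module of $\mathcal{H}_0^\mu=\mathbb{F}$ lies in weight $\mu$ and is annihilated by every $e_i^\mu$, and the cyclotomic relation $\prod_{i\in\mathbb{I}}(y_1-i)^{\langle h_i,\mu\rangle}=0$ enforces integrability. A modular branching argument (the cyclotomic analogue of Proposition~\ref{kle1}) would show that every simple $\mathcal{H}_n^\mu$-module arises as the head of some $\tilde f_{i_n}^\mu\cdots \tilde f_{i_1}^\mu\mathbf{1}$, so that the cyclic submodule generated by $\mathbf{1}$ exhausts $K(\mu)_{\mathbb{C}}$. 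Since $L(\mu)$ is the unique integrable irreducible of highest weight $\mu$, this forces $K(\mu)_{\mathbb{C}}\cong L(\mu)$.

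Third, for the crystal identification I would invoke Kashiwara's characterization of the lower crystal basis: once the isomorphism with $L(\mu)$ is in hand, a weight-compatible basis of simples equipped with operators satisfying the standard crystal axioms must coincide with $B(\mu)$. Proposition~\ref{kle1} and its cyclotomic descendant supply exactly what is needed, namely that $\tilde f_i^\mu L$ is simple or zero, that $\tilde e_i^\mu\tilde f_i^\mu L=L$ whenever $\tilde f_i^\mu L\neq 0$, and the dual relation. I expect the main obstacle to be the Mackey/Serre step of the first stage, which requires careful control of two-step induction and the spectral decomposition of the next Jucys--Murphy operator; since the statement is a verbatim citation of \cite[Theorem 9.5.1]{K2}, in practice I would simply quote it rather than reproduce the argument.
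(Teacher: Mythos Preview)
The paper gives no proof of this proposition at all: it is stated with the citation \cite[Theorem 9.5.1]{K2} and used as a black box. Your final sentence is therefore exactly what the paper does, and your three-stage outline is an accurate sketch of the argument in \cite{K2}, so there is nothing to compare beyond noting that the authors simply quote the result.
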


For $\lambda\in \Delta$, let
$$
K_G(\lambda)=\bigoplus_{n\geq 0}
K\left(\mathcal{H}_n^{\lambda}(G)\text{-{\bf mod}}\right).
$$
The functors $e_{i,\chi^k}^{\lambda}$ and $f_{i,\chi^k}^{\lambda}$
for $i \in \mathbb I$ and $1\le k \le r$ induce linear operators
(denoted by the same notations) on $K_G(\lambda)_\C:= \C\otimes_\Z
K_G(\lambda)$. The category equivalence in Theorem~\ref{morita
cyclotomic} induces a canonical linear isomorphism
\begin{eqnarray}  \label{isom}
F^\lambda : K_G(\lambda) \stackrel{\cong}{\longrightarrow}
K(\lambda[1]) \otimes \cdots \otimes K(\lambda[r]).
\end{eqnarray}

We shall identify $\mathcal{H}^{\lambda}_n(G)$-{\bf mod} with a
full subcategory of $\mathcal{H}_n(G)$-{\bf mod}. By
Lemma~\ref{lem:Mackey}, the functor $e_{i,\chi^k}^{\lambda}$
corresponds via $F^{\lambda}$ to $e^{\lambda[k]}_i$ applied to the
$k$-th factor on the right-hand side of \eqref{isom}. By Frobenius
reciprocity, $f_{i,\chi^k}^{\lambda}$ is left adjoint to
$e_{i,\chi^k}^{\lambda}$ and $f^{\lambda[k]}_i$ is left adjoint to
$e^{\lambda[k]}_i$, hence $f_{i,\chi^k}^{\lambda}$ corresponds to
$f^{\lambda[k]}_i$ applied to the $k$-th factor on the right-hand
side of \eqref{isom}. With the identification of
$\mathcal{H}^{\lambda}_n(G)$-{\bf mod} with a full subcategory of
$\mathcal{H}_n(G)$-{\bf mod}, Theorem~\ref{rule:Hng} implies the
modular branching rules for $\mathcal{H}^{\lambda}_n(G)$.
Combining these with Theorem~ \ref{morita cyclotomic} and
Proposition \ref{identification 1} we have established the
following.

\begin{thm}\label{identification 2}
Let $p> 0$. Then $K_G(\lambda)_{\mathbb{C}}$ affords a simple
$\widehat{sl}_p^{\oplus r}$-module isomorphic to
$L(\lambda[1])\otimes\cdots\otimes L(\lambda[r])$ with the
Chevalley generators 
of the $k$th summand of $\widehat{sl}_p^{\oplus r}$ acting as
$e_{i,\chi^k}^{\lambda}$ and $f_{i,\chi^k}^{\lambda}$ $(i\in
\mathbb I)$, for $1\le k \le r$.

Moreover, $\coprod_{n\geq
0}\text{Irr}(\mathcal{H}_n^{\lambda}(G)\text{-{\bf mod}})$ (and
respectively, the modular branching graph given by
Theorem~\ref{rule:Hng}) is isomorphic to the crystal basis
$B(\lambda[1])\otimes \cdots\otimes B(\lambda[r])$ (and
respectively, the corresponding crystal graph) for the simple
$U_{q}(\widehat{sl}_p^{\oplus r})$-module
$L(\lambda[1])\otimes\cdots\otimes L(\lambda[r])$.
\end{thm}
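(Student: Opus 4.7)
The plan is to transport Proposition~\ref{identification 1} along the Morita equivalence of Theorem~\ref{morita cyclotomic}. Applying $\mathcal{F}^{\lambda}$ summand by summand in $n\geq 0$ and tensoring with $\mathbb{C}$ yields a linear isomorphism
\[
K_G(\lambda)_{\mathbb{C}} \;\cong\; K(\lambda[1])_{\mathbb{C}} \otimes \cdots \otimes K(\lambda[r])_{\mathbb{C}}.
\]
By Proposition~\ref{identification 1}, the right-hand side carries a natural $\widehat{sl}_p^{\oplus r}$-action, with the $k$th summand acting on the $k$th tensor factor, which identifies it with $L(\lambda[1]) \otimes \cdots \otimes L(\lambda[r])$. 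This is irreducible because a tensor product of irreducibles over a direct sum of (affine) Lie algebras is again irreducible.

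Next I would pull this action back to $K_G(\lambda)_{\mathbb{C}}$ and verify that the Chevalley generators of the $k$th summand act precisely as $e_{i,\chi^k}^{\lambda}$ and $f_{i,\chi^k}^{\lambda}$. For the Chevalley $e$'s this is essentially Lemma~\ref{lem:Mackey}: the composition of restriction to $\mathcal{H}_{n-1}^{\lambda}(G)\times G$ with $\text{Hom}_G(V_k,-)$ corresponds, under $\mathcal{F}^{\lambda}$, to ordinary restriction in the $k$th tensor factor, and the block projection onto $\gamma-\varepsilon_{ic_k}$ in Definition~\ref{crystal operators defn} selects the $i$-eigenspace of $y_k$ after the rescaling $x_k \leftrightarrow c_k y_k$ of Proposition~\ref{Hom functor}. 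For the $f$'s, Frobenius reciprocity shows that $f_{i,\chi^k}^{\lambda}$ is left adjoint to $e_{i,\chi^k}^{\lambda}$; by uniqueness of left adjoints it must correspond to $f_i^{\lambda[k]}$ acting on the $k$th factor.

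For the crystal statement, Theorem~\ref{IrrWHA2} combined with Theorem~\ref{morita cyclotomic} identifies $\coprod_n \text{Irr}(\mathcal{H}_n^{\lambda}(G)\text{-{\bf mod}})$ with the set of tuples $(L_1,\ldots,L_r)$, indexed by $\widehat{n}\in \Cnr$, where each $L_k \in \text{Irr}(\mathcal{H}_{n_k}^{\lambda[k]}\text{-{\bf mod}})$; by Proposition~\ref{identification 1} this is precisely the vertex set of $B(\lambda[1]) \otimes \cdots \otimes B(\lambda[r])$. To match the arrows, I apply Theorem~\ref{rule:Hng} and observe that, since only the $k$th factor of $L_\centerdot$ is perturbed, taking socles commutes with induction along $\mathcal{H}_{\widehat{n}_k^-}(G) \subset \mathcal{H}_{n-1}(G)$, giving
\[
\tilde{e}_{i,\chi^k}^{\lambda}\, D_{\widehat{n}}(L_\centerdot) \;=\; D_{\widehat{n}_k^-}(L_1,\ldots,\tilde{e}_i^{\lambda[k]} L_k,\ldots,L_r).
\]
This coincides with the Kashiwara operator acting purely on the $k$th factor of a crystal over $\widehat{sl}_p^{\oplus r}$; no cross-factor sign rule appears because the simple roots of distinct summands are unrelated.

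The main obstacle will be bookkeeping with the scalar $c_k$. Under $\mathcal{F}^{\lambda}$ the polynomial generators satisfy $x_k = c_k y_k$ on each $\mathbb{F}G^n$-isotypical piece (Proposition~\ref{Hom functor}), so an $x_k$-eigenvalue $ic_k$ corresponds to a $y_k$-eigenvalue $i$; this is the source of the shift $\varepsilon_{ic_k}$ in Definition~\ref{crystal operators defn} and of the relation $\langle h_i,\lambda[k]\rangle = \lambda_{ic_k}$. One must check that the map $\mathbb{I}\to \mathbb{I}$, $i\mapsto ic_k$, is a bijection intertwining the two labelings, which uses crucially that $c_k$ is invertible in $\mathbb{F}$---guaranteed by the hypothesis $p\nmid |G|$.
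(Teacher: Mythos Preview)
Your proposal is correct and follows essentially the same route as the paper: transport Proposition~\ref{identification 1} along the Morita equivalence of Theorem~\ref{morita cyclotomic}, identify $e_{i,\chi^k}^{\lambda}$ with $e_i^{\lambda[k]}$ on the $k$th factor via Lemma~\ref{lem:Mackey}, handle $f_{i,\chi^k}^{\lambda}$ by Frobenius reciprocity and uniqueness of left adjoints, and deduce the crystal statement from Theorem~\ref{rule:Hng}. Your explicit remarks on the $c_k$-rescaling and the bijection $i\mapsto ic_k$ on $\mathbb{I}$ (which the paper invokes as $\frac{1}{c_k}\mathbb{I}=\mathbb{I}$ in the proof of Theorem~\ref{morita cyclotomic}) are exactly the bookkeeping the argument requires.
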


\begin{rem} \label{rem:wreathcrystal}
By Remark \ref{minimal quotient},
$\mathcal{H}^{\Lambda_0}_n(G)\cong\mathbb{F}G_n$. Observe that
$\Lambda_0[k]=\Lambda_0$, the $0$th fundamental weight of
$\widehat{sl}_p$ for all $1\leq k\leq r$. By Theorem
\ref{identification 2},
$\bigoplus_{n}{\mathbb{C}}\otimes_{\mathbb{Z}}K(\mathbb{F}G_n\text{-{\bf
mod}})$ affords a simple $\widehat{sl}_p^{\oplus r}$-module
isomorphic to $L(\Lambda_0)\otimes\cdots\otimes L(\Lambda_0)$
(compare with Corollary~ \ref{conjugate class equation}).
Actually, such a statement holds without any assumption on $p$ if
we replace $\mathbb{F}G_n\text{-{\bf mod}}$ above by
$\mathbb{F}G_n\text{-{\bf mod}}^s$ (defined similarly as
$\HngMods$), and the modular branching rule for $\mathbb{F}G_n$ in
Proposition~\ref{rule:Gn} can be interpreted as the crystal graph
$B(\Lambda_0)\otimes\cdots\otimes B(\Lambda_0)$.
\end{rem}

\begin{rem}  \label{rem:p=0}
The case $p=0$ can be treated similarly with somewhat more
complicated notations, and so we will be sketchy. An analogue of
Theorem~ \ref{morita cyclotomic} holds with suitably modified
algebra $A_{n,r}^\lambda$. The modification can be easily made
precise by an examination of the proof of Theorem~ \ref{morita
cyclotomic}: the annihilation ideal of $\mathcal H_{n_k}$
(corresponding to the $k$th tensor factor of $A_{n,r}^\lambda$) is
generated by $\prod_{i\in\mathbb{I}}(y_{1}-i/c_k)^{\lambda_i}$.
Note that all $i/ c_k$ are not necessarily integers, and so some
$k$th tensor factor of $A_{n,r}^\lambda$ is possibly a {\em
non-integral} cyclotomic (quotient) Hecke algebra $N_{n_k}$ of
$\mathcal H_{n_k}$.
For each such $k$, dividing $\{i/ c_k \mid i \in \Z \}$ into
congruence classes modulo $\Z$ leads to a decomposition of
$N_{n_k}$ as a tensor product of integral cyclotomic Hecke
algebras correspond to these congruence classes. This will lead to
an analogous formulation of Theorem~\ref{identification 2} via the
infinite-rank affine algebra $\widehat{sl}_\infty$.
%
\end{rem}

\end{document}